\documentclass[lettersize,onecolumn]{IEEEtran}
\usepackage{amsmath,amsfonts}
\usepackage{algorithmic}
\usepackage{algorithm}
\usepackage{array}
\usepackage{textcomp}
\usepackage{stfloats}
\usepackage{url}
\usepackage{verbatim}
\usepackage{graphicx}
\usepackage[colorlinks=true,
            linkcolor=blue,
            citecolor=blue,
            urlcolor=blue]{hyperref}
\input{extrapackages}
\DeclareAcronym{MLE}{short = MLE, long = maximum likelihood estimation}
\DeclareAcronym{EM}{short = EM, long = expectation-maximization}
\DeclareAcronym{IMS}{short = IMS, long = imaging mass spectrometry}
\DeclareAcronym{HSI}{short = HSI, long = hyperspectral imaging}
\DeclareAcronym{GMM}{short = GMM, long = Gaussian mixture model}
\DeclareAcronym{SMM}{short= SMM, long = spiked mixture model}
\DeclareAcronym{RMT}{short = RMT, long = random matrix theory}
\DeclareAcronym{BBP}{short = BBP, long = Baik-Ben~Arous-Péché}
\DeclareAcronym{ESD}{short = ESD, long = empirical spectral distribution}
\DeclareAcronym{LSI}{short = LSI, long =Logarithmic Sobolev inequality}
\DeclareAcronym{MP}{short = MP, long =Mar\v{c}henko–Pastur}
\DeclareAcronym{SNR}{short = SNR, long =signal-to-noise ratio}
\DeclareAcronym{iid}{short = i.i.d., long =independent and identically distributed}
\DeclareMathOperator{\E}{\mathbb{E}}

\DeclareMathOperator{\Tr}{Tr}

\newcommand{\Var}{\operatorname{Var}}

\newcommand{\y}{\mathbf{y}}
\newcommand{\x}{\mathbf{x}}
\newcommand{\vb}{\mathbf{v}}
\newcommand{\zb}{\mathbf{z}}

\newcommand{\cb}{\mathbf{c}}
\newcommand{\db}{\mathbf{d}}
\newcommand{\Db}{\mathbf{D}}

\newcommand{\ub}{\mathbf{u}}
\newcommand{\wb}{\mathbf{w}}

\newcommand{\vnull}{\mathbf{0}} 

\newcommand{\e}{\boldsymbol{\varepsilon}}
\newcommand{\A}{\mathbf{A}}
\newcommand{\Z}{\mathbf{Z}}
\newcommand{\Sb}{\mathbf{S}}
\newcommand{\Pb}{\mathbf{P}}
\newcommand{\X}{\mathbf{X}}
\newcommand{\R}{\mathbf{R}}
\newcommand{\Bb}{\mathbf{B}}
\newcommand{\I}{\mathbf{I}}
\newcommand{\Hb}{\mathbf{H}}

\newcommand{\Sig}{\mathbf{\Sigma}}
\newcommand{\M}{\mathbf{M}}
\newcommand{\Lb}{\mathbf{L}}
\newcommand{\tb}{\mathbf{t}}
\newcommand{\g}{\mathbf{g}}

\usepackage{bbm}
\newcommand{\ind}{\mathbbm{1}}

\usepackage{amsthm}
\newtheorem{theorem}{Theorem}[section]
\newtheorem{lemma}[theorem]{Lemma}
\newtheorem{proposition}[theorem]{Proposition}

\DeclareMathOperator{\Lip}{Lip}

\usepackage{orcidlink}
\begin{document}

\title{Phase Transition of Eigenvalues of Covariances from the Spiked Mixture Model in High-dimensional Regimes}

\author{
    Paul-Louis Delacour\,\orcidlink{0009-0006-2191-6589},
    Raf Van de Plas\,\orcidlink{0000-0002-2232-7130}, \IEEEmembership{Member,~IEEE,}%
    
    \thanks{
        Paul-Louis Delacour is with the Delft Center for Systems and Control, Delft University of Technology, 2628 CD Delft, The Netherlands 
        (e-mail: p.l.delacour@tudelft.nl).
    }

    \thanks{
        Raf Van de Plas is with the Delft Center for Systems and Control, Delft University of Technology, 2628 CD Delft, The Netherlands,
        and with the Department of Biochemistry, Vanderbilt University, Nashville, TN 37232, U.S.A.,
        and with the Mass Spectrometry Research Center, Vanderbilt University School of Medicine, Nashville, TN 37232, U.S.A.
        (e-mail: raf.vandeplas@tudelft.nl).
    }
}

% \author{IEEE Publication Technology,~\IEEEmembership{Staff,~IEEE,}
%         % <-this % stops a space
% \thanks{This paper was produced by the IEEE Publication Technology Group. They are in Piscataway, NJ.}% <-this % stops a space
% \thanks{Manuscript received October 26, 2023; revised December 8, 2023.}}

% The paper headers
\markboth{Preprint July~2026}%
{Shell \MakeLowercase{\textit{et al.}}: A Sample Article Using IEEEtran.cls for IEEE Journals}

%\IEEEpubid{0000--0000~\copyright~2023 IEEE}
% Remember, if you use this you must call \IEEEpubidadjcol in the second
% column for its text to clear the IEEEpubid mark.

\maketitle

\begin{abstract}
The \acf{SMM} has been introduced as a probabilistic model that generalizes the single-spike (Wishart) model to a mixture model form.
  With applications ranging from \acl{IMS} in the life sciences to hyperspectral imaging in computer vision, it is crucial to understand under which circumstances its signals can be recovered from noisy measurements.
  The highly multiplexed nature of these measurement types furthermore necessitates such analysis to hold in high-dimensional settings.
  In this paper, we prove that the extreme eigenvalues of the covariance matrix from the \ac{SMM} exhibit a phase transition in high-dimensional regimes.
  We show that this phase transition, and thus signal recovery by extreme eigenvalues, depends on several interacting factors: the correlation between spikes (\textit{i.e.}, how similar in content underlying signals are), the energy parameters (\textit{i.e.}, the absolute strength of each underlying signal), and the mixture probabilities (\textit{i.e.}, how likely it is to encounter each underlying signal).
  This work provides sharp information-theoretic bounds on the parameters needed to detect one or more spikes from extreme eigenvalues of the \ac{SMM} covariance matrix, and these guarantees could potentially impact any application of the \ac{SMM}.
  Understanding this interplay could serve as a tool for driving experimental design in analytical chemistry and life sciences. 
\end{abstract}

\begin{IEEEkeywords}
Random matrix theory, spiked covariance model, spiked mixture model, phase transition, Marchenko–Pastur law, BBP transition, high-dimensional statistics, spectral estimation, signal detection.
\end{IEEEkeywords}

\section{Introduction}
Many high-dimensional systems and measurement types in, \textit{e.g.}, neural networks \cite{thamm2022random}, wireless communications systems \cite{wireless_Taherpour}, functional genomics \cite{nitzan2021revealing}, biomedical imaging \cite{VERAART2016394}, and finance \cite{laloux_finance_application} can be modeled as large random matrices whose spectral properties (eigenvalues and eigenvectors) encode essential information. 
As a result, tools from \ac{RMT} are increasingly used to analyze such systems and data. For example, recent work used these tools to analyze weight matrices or Jacobians in deep neural networks in order to understand generalization and structure \cite{benigni2021eigenvalue}.

As technological advances deliver increasingly high-dimensional measurement types, \textit{e.g.}, in fields such as spatial transcriptomics and \acf{IMS} \cite{Spraggins2026}, the need to exploit \ac{RMT} in a high-dimensional regime becomes essential. Therefore, in this work we focus specifically on random matrices whose dimensions grow without bound. A foundational result in this area is that for many classical ensembles (such as Wigner matrices), the empirical distribution of eigenvalues converges (in the weak sense) to a deterministic limit called the semicircle law \cite{erdHos2009semicircle}. Moreover, for sample covariance matrices, one similarly obtains in the limit the \ac{MP} law \cite{marcenko_pastur_1967}.

One particularly interesting phenomenon arises when a signal (\textit{i.e.}, a finite low‐rank deterministic component) is embedded in noise (\textit{i.e.}, a noisy high‐dimensional matrix). 
This scenario is relevant to questions of signal detectability, for example in the context of machine learning \cite{perry2016optimality}, and is known as the single-spike model.
Above certain thresholds, the signal or `spike' can be detected despite its noisy environment, through eigenvalues that escape from the bulk spectrum. This phenomenon is known as the \ac{BBP} phase transition \cite{bbp_phase_transition}.
Subsequent work has further extended this idea to finitely many spikes \cite{arous2024high}, community detection models \cite{two-community}, and beyond \cite{landau_singular_val,detection_spiked_rectangular}.

The \acf{SMM} \cite{SMM_IEEE} is a statistical model that generalizes the spiked Wishart model to a mixture model form. Equivalently, the covariance matrix from the \ac{SMM} model can be viewed as a sum of covariance matrices from single-spiked Wishart models of random size.
In this paper, we investigate the limiting spectral distribution of \ac{SMM} covariance matrices. 
Our analysis highlights how the correlation between spikes (\textit{i.e.}, how similar the underlying signals are), the energy parameters (\textit{i.e.}, the absolute strength of each underlying signal), and the mixture probabilities (\textit{i.e.}, how likely it is to encounter each underlying signal) govern the emergence of eigenvalues detaching from the bulk, a phenomenon often referred to as ``eigenvalue push-out''.
Such a result provides sharp information-theoretic bounds on the parameters needed to detect one or more spikes from the extreme eigenvalues of the covariance matrix. These guarantees potentially impact any application where the \ac{SMM} can be used, including ones in \ac{IMS} and \ac{HSI} \cite{SMM_IEEE}.

Section~\ref{sec:model+theorem+demo} introduces the \ac{SMM} and puts forward the \ac{SMM} phase transition theorem.
Furthermore, it includes practical demonstrations of the theorem, illustrating under which circumstances signals embedded in the noise are detectable from extreme eigenvalues.
In section~\ref{sec:cov-matrices+proof}, we provide a proof for the \ac{SMM} phase transition theorem, with supporting argumentation provided in supplementary sections~\ref{appendixA} and \ref{appendixB}.

\subsection{Related Works}

Asymptotic spectra of sample covariance matrices have been studied extensively. In the linear setting $\y=\Sig^{1/2}\zb$ with $\zb$ having \ac{iid} features, and $\Sig$ the associated covariance matrix, the limiting spectrum follows a (generalized) \acl{MP} law, and a \acl{BBP} phase transition governs the outliers: an isolated eigenvalue detaches from the bulk precisely when a population spike exceeds a critical threshold \cite{bbp_phase_transition, Bloemendal2013IsotropicLL, Knowles2017Anisotropic}. In \cite{wang2024nonlinear}, this picture was recently extended beyond settings where the dependence between features is encoded linearly through a covariance matrix $\Sig$. An arbitrary, possibly nonlinear dependence between features is allowed, requiring only that the quadratic forms $\y^T\A\y$ concentrate around their expectation uniformly over square matrices $\A$. 

In this work, we derive the phase transition specific to the \ac{SMM} using a low-rank perturbation approach akin to Benaych-Georges and Nadakuditi \cite{benaych2011eigenvalues}. Conditional on the selected subpopulation spike within the mixture model, the \ac{SMM}'s covariance matrix is a finite-rank perturbation of a white Wishart matrix. Together with the bi-orthogonal invariance of Gaussian noise, this facilitates that $K$ spikes are captured jointly by a single small matrix whose singularity locates all outliers at once, and which we reduce to the $K\times K$ matrix $\Lb$. This yields a couple of benefits over the deterministic-equivalent analysis of \cite{wang2024nonlinear}. First, the argument is lighter: rotational invariance reduces the problem to the $T$-transform of the limiting \ac{MP} bulk, which is explicit in our isotropic-noise setting and which we obtain by Gaussian concentration. This avoids the resolvent local laws and fluctuation-averaging machinery required by the coordinate-general analysis in \cite{wang2024nonlinear}. Second, treating the full rank-$K$ perturbation jointly lets us go beyond the distinct-spike regime of \cite{wang2024nonlinear}, whose results assume simple population spikes. Our approach covers repeated values of $\lambda_i(\Lb)$ by a perturbation argument and recovers each outlier at its correct multiplicity. The ability to handle repeating eigenvalues is important since such degeneracies are intrinsic to mixture models, where symmetric configurations of the subpopulations (\textit{e.g.}, balanced, equally spaced spikes) commonly produce repeated eigenvalues. Finally, our approach generalizes to regimes that do not satisfy the quadratic concentration assumption of \cite{wang2024nonlinear}. 
As a demonstration, in appendix~\ref{sec:rare_intense}, we show how our approach extends beyond scenarios with fixed signal strength spikes and can, in fact, handle a rare and intense regime in which the spike's signal strength diverges while the corresponding subpopulation probability vanishes, \textit{i.e.}, spikes of unbounded energy in a vanishing fraction of observations.

\subsection{Notation}
\begin{itemize}
    \item For $k \in \{1,\dots,n\}$, $\mathbf{e}_k \in \mathbb{R}^n$ denotes the $k$th standard basis vector.    
    \item For $\mathbf{S} \in \mathbb{M}_d(\mathbb{C})$ a self adjoint matrix, we denote its $d$ eigenvalues by
    \begin{align*}
        \lambda_1(\mathbf{S}) \geq \lambda_2(\mathbf{S}) \geq \cdots \geq \lambda_d(\mathbf{S}).
    \end{align*}
    \item When $d \to \infty$ and $i\geq 1$ is fixed, the extreme eigenvalues of a sequence $\mathbf{S}_d \in \mathbb{M}_d(\mathbb{C})$ refer to $\lambda_i(\mathbf{S}_d)$ (largest eigenvalues) or $\lambda_{d-i+1}(\mathbf{S}_d)$ (smallest eigenvalues).
    \item For a vector $\mathbf{v}$, $\|\mathbf{v}\|_2$ and $\|\mathbf{v}\|_\infty$ denote the Euclidean norm and the $\ell_\infty$ norm, respectively.
    \item For a matrix $\mathbf{A}$, $\|\mathbf{A}\|_{\mathrm{op}}$
    and $\|\mathbf{A}\|_{\mathrm{F}}$ denote the operator (spectral) norm
    and the Frobenius norm, respectively.
    \item For $u\in\mathbb{C}$, $E$ is a closed subset of $\mathbb{R}$, we denote the distance $d(u,E) = \min_{x\in E} |u-x|$.
\end{itemize}

\section{The \acf{SMM} and its phase transition}
\label{sec:model+theorem+demo}

The \ac{SMM}, introduced in \cite{SMM_IEEE}, is a generalization of the single-spike model to a mixture model form. We consider $n$ independent observations $\y_1, \ldots, \y_n \in \mathbb{R}^d$, each sampled from the \ac{SMM}:
\begin{align}
     & \y = 
    \begin{cases}
        \alpha \sqrt{\beta_1} \vb_1 + \e &\text{with probability } \pi_1\\
        \quad\vdots \\
        \alpha \sqrt{\beta_K} \vb_{K} + \e &\text{with probability } \pi_{K}\\
    \end{cases},\label{eq:model} \\
    &\alpha \sim \mathcal{N}(0,1),\: \e \sim \mathcal{N}(\vnull,\I),\: \sum_{k=1}^K \pi_k = 1,\nonumber\\
    &\beta_1, \ldots , \beta_K \in \mathbb{R}^{+}, \: \vb_1, \ldots, \vb_K \in \mathbb{R}^d, \|\vb_k\|_2=1 \nonumber,
\end{align}
where $\alpha$ is the random scaling factor of observation $\y$,
$\sqrt{\beta_k} \vb_k$ is the $k$-th subpopulation or spike with $\vb_k$ as the normalized spike signal and $\sqrt{\beta_k}$ reporting the strength of the signal,
$\e$ is the random noise of observation $\y$,
and $\pi_k$ is the probability of the $k$-th subpopulation.
Let $z\in\{1,\dots,K\} $ be a latent categorical variable indicating which of the spikes $\sqrt{\beta_1} \vb_1, \ldots , \sqrt{\beta_K} \vb_K$ was used to generate $\y$.

We will analyze the regime in which $d\to \infty, n \to \infty$, $d/n \to \gamma$, and $K$ remains fixed, hereafter referred to as the high-dimensional regime. 
In the following, we consider $\left\{\beta_k\right\}_{k=1}^K$ to be fixed, and further extend to diverging scenarios in appendix~\ref{sec:rare_intense}.
This paper adopts the Bayesian viewpoint in which the spikes are drawn from an arbitrary prior, but they are required to almost surely tend to a fixed correlation in the limit $d\to \infty$:
\begin{align}
    \vb_l \cdot \vb_k &\xrightarrow[d \to \infty]{\textit{a.s.}} \theta_{l,k} \in [0,1], \qquad\qquad\forall l,k \in [K]. \label{eq:asymmptotic_correlation}
\end{align}
A simple example of a distribution satisfying \eqref{eq:asymmptotic_correlation} in the case of $K=2$ is to sample $\vb_1, \vb_2$ as follows: 
\begin{equation}
    \left\{
    \begin{aligned}
        \vb_1 &= \ub_1/\|\ub_1\|_2 \\
        \vb_2 &= \frac{\theta \ub_1 + \sqrt{1-\theta^2}\,\ub_2}
                     {\|\theta \ub_1 + \sqrt{1-\theta^2}\,\ub_2\|_2}
    \end{aligned}
    \right.,
    \label{eq:vdefs}
\end{equation}
for $\ub_1, \ub_2 \sim \mathcal{N}(\mathbf{0},\I)$ and $\theta \in [0,1]$. The correlation then corresponds to 
\begin{align*}
    \vb_1 \cdot \vb_2 &= \frac{\theta \|\ub_1\|_2^2 + \sqrt{1-\theta^2}\: \ub_1 \cdot\ub_2 }{\|\ub_1\|_2\: \|\theta \ub_1 + \sqrt{1-\theta^2} \ub_2\|_2},
 \end{align*}
and, by using Gaussian concentration of measure, one gets $\vb_1 \cdot \vb_2 \xrightarrow[d \to \infty]{\textit{a.s.}} \theta$.
Furthermore, we consider the covariance matrices coming from \ac{SMM} model \eqref{eq:model}: 
\begin{align}
    \Sb_{d,n} = \frac{1}{n} \sum_{i=1}^n \y_i \y_i^\mathsf{T} \in \mathbb{R}^{d\times d},
    \label{eq:covariance_def}
\end{align}
along with their ordered eigenvalues 
\begin{align*}
    \lambda_1(\Sb_{d,n}) \geq \ldots \geq \lambda_d(\Sb_{d,n}) \geq 0.
\end{align*}
After introducing the \ac{SMM} model and establishing the high-dimensional regime setting, we posit a corresponding phase transition theorem. 

\begin{theorem}[\ac{SMM} phase transition]
\label{thm:SMM_phase_transition}
The extreme eigenvalues of $\Sb_{d,n}$ \eqref{eq:covariance_def} exhibit the following behavior as $n\to \infty$ and $d\to \infty$ with $d/n \to \gamma$. 
Consider $\mathbf{L} \in \mathbb{R}^{K\times K}$ the Gram limiting matrix with entries $\mathbf{L}_{l,m} = \theta_{l,m} \sqrt{\pi_l \pi_m \beta_l \beta_m}$. 
\begin{itemize}
    \item For $1\leq i \leq K$,
    \begin{align*}
    \lambda_i(\Sb_{d,n}) \xrightarrow{\textit{a.s.}}
    \begin{cases}
        T^{-1}(1/\lambda_i(\Lb)) &\quad\text{if } \lambda_i(\Lb) > \sqrt{\gamma} \\
        (1 + \sqrt{\gamma})^{2} &\quad\text{otherwise},
    \end{cases}; \text{ and}
    \end{align*}    
    \item for $i > K$, 
    \begin{align*}
        \lambda_i(\Sb_{d,n}) &\xrightarrow{\textit{a.s.}} (1+\sqrt{\gamma})^2.
    \end{align*}
\end{itemize}
Here,
\begin{align*}
    T(z) &= \int \frac{t}{z-t} d \mu_{\mathrm{MP}(\gamma)}(t) &\text{for } z \in \mathbb{C} \setminus [a,b],
\end{align*}
is the $T$-transform of $\mu$, and $\mu_{\mathrm{MP}(\gamma)}$ is the \acl{MP} distribution with parameter $\gamma > 0$. Its density is 
\begin{align*}
    d \mu_{\mathrm{MP}(\gamma)}(x) = \frac{1}{2\pi \gamma x} \sqrt{(b-x)(x-a)}\ind_{[a,b]}(x) d x + \max\left(0,1-\frac{1}{\gamma}\right)\delta_0,
\end{align*}
where $\ind_{[a,b]}$ is the indicator function, $a := (1-\sqrt{\gamma})^2$ and $b :=(1+\sqrt{\gamma})^2$ denote the edges of the bulk of the distribution, and $\delta_0$ is the Dirac delta at location  $0$.
The inverse $T$-transform associated with $\mu_{\mathrm{MP}(\gamma)}$ is
\begin{align*}
    T^{-1}(l) = (1+1/l)(1+\gamma l).
\end{align*}
Note that for $l\in \mathbb{R}^+$ we indeed have $T^{-1}(1/l) \geq b$ with equality only for $l=\sqrt{\gamma}$.
\end{theorem}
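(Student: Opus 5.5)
We write the data matrix as $\Y = \Pb + \E$, where $\E\in\mathbb{R}^{d\times n}$ has i.i.d.\ $\mathcal{N}(0,1)$ entries. The signal part is $\Pb = \sum_{k=1}^K \sqrt{\beta_k}\,\vb_k \,(\alpha\odot \ind_{z=k})^\mathsf{T}$, where $(\alpha\odot\ind_{z=k})_i=\alpha_i\ind_{z_i=k}$, so $\Pb$ has rank at most $K$. Then $\Sb_{d,n}=\frac1n \Y\Y^\mathsf{T}$.

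We condition on the latents $(\alpha_i,z_i)_{i\le n}$ and on the spikes $\vb_k$. The noise $\E$ is independent of these and bi-orthogonally invariant, so we are in the setting of Benaych-Georges and Nadakuditi's rectangular additive low-rank perturbation. We write $\frac{1}{\sqrt n}\Pb=\mathbf{V}\mathbf{\Theta}\mathbf{W}^\mathsf{T}$ with $\mathbf{V}=[\vb_1,\dots,\vb_K]$ and $\mathbf{W}=\frac{1}{\sqrt n}[\sqrt{\beta_k}\,\alpha\odot\ind_{z=k}]_k$. The columns of $\mathbf{W}$ have disjoint supports, so by the strong law
\[
\mathbf{W}^\mathsf{T}\mathbf{W}\xrightarrow{a.s.}\mathrm{diag}(\pi_k\beta_k),\qquad \mathbf{V}^\mathsf{T}\mathbf{V}\xrightarrow{a.s.}(\theta_{l,m}).
\]
Hence the nonzero squared singular values of $\frac1{\sqrt n}\Pb$, namely the eigenvalues of $\mathbf{W}^\mathsf{T}\mathbf{W}\,\mathbf{V}^\mathsf{T}\mathbf{V}$ (or of its symmetrization $D^{1/2}\Theta D^{1/2}=\Lb$ with $D=\mathrm{diag}(\pi_k\beta_k)$), converge a.s.\ to $\lambda_i(\Lb)$. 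The same limits hold for the singular-vector structure after an orthonormalization, which we carry out by a Gram–Schmidt of $\mathbf{V}$ and $\mathbf{W}$ that absorbs the correlations into a $K\times K$ core whose singular values converge to $\sqrt{\lambda_i(\Lb)}$.

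For the master equation, we write $\mathbf{X}=\frac1{\sqrt n}\E$. By Sylvester's determinant identity, $z\notin \mathrm{spec}(\mathbf{X}\mathbf{X}^\mathsf{T})$ is an eigenvalue of $\Sb_{d,n}$ iff a $2K\times 2K$ determinant vanishes. That determinant involves the quadratic forms
\[
\ub^\mathsf{T} z(z-\mathbf{X}\mathbf{X}^\mathsf{T})^{-1}\ub,\quad \wb^\mathsf{T} z(z-\mathbf{X}^\mathsf{T}\mathbf{X})^{-1}\wb,\quad \ub^\mathsf{T}(z-\mathbf{X}\mathbf{X}^\mathsf{T})^{-1}\mathbf{X}\wb
\]
for orthonormal $\ub,\wb$ in the column spaces above. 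Because $\E$ is independent of $(\mathbf{V},\mathbf{W})$ and bi-orthogonally invariant, each of these converges a.s., uniformly on compacts away from $[a,b]$, to its normalized trace. This follows by Haar-vector concentration, or Gaussian concentration of quadratic forms with Borel–Cantelli. The diagonal terms converge to the Cauchy transforms of $\mu_{\mathrm{MP}(\gamma)}$ and of its companion $\gamma\mu_{\mathrm{MP}}+(1-\gamma)\delta_0$, and the cross terms vanish. The determinant then reduces to $\det\big(\I_K - \Lb\, T(z)\big)$ up to a nonvanishing factor; this product of the two Cauchy-type transforms is exactly the rectangular $D$-transform, which for Wishart plus rank-one is known to equal $zG(z)\cdot(\ldots)$ and to reduce to $T$. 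So the limiting outliers are the solutions $z>b$ of $T(z)=1/\lambda_i(\Lb)$. We check that $T$ is decreasing on $(b,\infty)$ with $T(b^+)=1/\sqrt\gamma$, which gives an outlier iff $\lambda_i(\Lb)>\sqrt\gamma$, located at $T^{-1}(1/\lambda_i(\Lb))=(1+1/l)(1+\gamma l)$ with $l=1/\lambda_i(\Lb)$ as stated. Inverting the explicit MP Stieltjes transform is routine.

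To pass from limits of functions to limits of eigenvalues, we note that $\lambda_1(\mathbf{X}\mathbf{X}^\mathsf{T})\to b$ a.s.\ (Geman, Bai–Yin). We apply Rouché/Hurwitz to the analytic functions $\det(\I-\Lb_n T_n(z))$ on contours around each root to match the number of outliers with their multiplicities. The remaining eigenvalues, by Weyl interlacing for a rank-$K$ perturbation, satisfy $\lambda_{i+K}(\mathbf{X}\mathbf{X}^\mathsf{T})\le\lambda_i(\Sb)$ and $\lambda_{i}(\Sb)\le$ anything not an outlier. Combined with the absence of roots in $(b+\epsilon,\infty)$, this gives convergence to $b$ for the non-detached indices $i\le K$ and for all fixed $i>K$. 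Since the conditional statements hold a.s.\ for a.e.\ realization of the latents, Fubini gives the unconditional a.s.\ statement.

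The main obstacle is the multiplicity bookkeeping when $\Lb$ has repeated eigenvalues or eigenvalues exactly at $\sqrt\gamma$, since the finite-$n$ core matrix only converges to $\Lb$. We first try Hurwitz's theorem on $\det(\I-\Lb_nT(z))$, which counts zeros with multiplicity automatically. For the borderline case $\lambda_i(\Lb)=\sqrt\gamma$ we use monotonicity of eigenvalues in the perturbation strength: slightly scaling $\Lb$ up or down sandwiches the limit, and both sides tend to $b$.
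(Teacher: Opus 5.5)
Your proof is correct, and it takes a genuinely different route from the paper's.

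\textbf{Comparison of the two routes.} You use the additive (information-plus-noise) form $\frac{1}{\sqrt n}\mathbf{Y}=\frac{1}{\sqrt n}\Pb+\mathbf{X}$. Its signal part has rank $K$ and, after conditioning on the latents, is independent of the bi-orthogonally invariant Gaussian noise. You then run the rectangular Benaych-Georges--Nadakuditi argument.

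The paper instead whitens each observation, $\y_i=(\I_d+\beta_{z_i}\vb_{z_i}\vb_{z_i}^\mathsf{T})^{1/2}\x_i$. It writes $\Sb_{d,n}=\Z+\Pb_1\Pb_2^\mathsf{T}$, a rank-$2K$ indefinite perturbation of the white Wishart matrix $\Z$. That perturbation is built from the blocks $\Z_k$ of $\Z$ itself, so it is not independent of $\Z$. This forces three extra steps:
\begin{itemize}
\item mixed limits such as $\frac1d\Tr\big(\Z_l(z\I_d-\Z)^{-1}\Z_m\big)$ (Lemmas~\ref{lem:limit_type1} and \ref{lem:limit_type2});
\item the sign-flip identity $|\det\M|=|\det(\M\tilde{\M})|^{1/2}$, to collapse the $2K\times 2K$ limit to $K\times K$;
\item a separate perturbation of the spikes to handle repeated eigenvalues of $\Lb$.
\end{itemize}

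In your route, $\Lb$ appears directly, since $\mathbf{W}^\mathsf{T}\mathbf{W}\,\mathbf{V}^\mathsf{T}\mathbf{V}\to D\Theta$, which is similar to $D^{1/2}\Theta D^{1/2}$. Only isotropic resolvent quadratic forms are needed. Hurwitz counts multiplicities, including repeated $\lambda_i(\Lb)$, automatically. Your interlacing $\lambda_{i+K}(\mathbf{X}\mathbf{X}^\mathsf{T})\le\lambda_i(\Sb_{d,n})\le\lambda_{i-K}(\mathbf{X}\mathbf{X}^\mathsf{T})$ is valid as stated. By contrast, the paper's lower bound $\lambda_i(\Z)\le\lambda_i(\Sb_{d,n})$ is obtained by dropping $\lambda_d(\Pb_1\Pb_2^\mathsf{T})$, which is generally negative.

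Your argument uses $\alpha$ only through $\frac{\beta_k}{n}\sum_{i:z_i=k}\alpha_i^2\to\pi_k\beta_k$. So it does not need Gaussian $\alpha$. For the same reason it makes the rare-and-intense regime of Appendix~\ref{sec:rare_intense} essentially immediate, provided $n\pi_k\to\infty$.

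What the paper's multiplicative form retains is the conditional spiked-covariance structure itself. It therefore does not rely on the population perturbation being realizable as an independent additive signal. That realizability would fail, for example, if $\beta_k\in(-1,0)$.

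\textbf{Details to tighten.}

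(1) In the linearized determinant, write $\zeta^2=z$. The diagonal forms are $\zeta\,\ub^\mathsf{T}(\zeta^2\I_d-\mathbf{X}\mathbf{X}^\mathsf{T})^{-1}\ub\to\zeta G_1(z)$ and $\zeta\,\wb^\mathsf{T}(\zeta^2\I_n-\mathbf{X}^\mathsf{T}\mathbf{X})^{-1}\wb\to\zeta G_2(z)$, not $z(z-\cdot)^{-1}$. The reduction of their product to $T$ is a two-line computation you should write out. Use $G_2=\gamma G_1+(1-\gamma)/z$ and the Mar\v{c}enko--Pastur equation $\gamma zG_1^2-(z-1+\gamma)G_1+1=0$. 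Together these give $zG_1G_2=zG_1-1=T(z)$.

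(2) Hurwitz must be applied to the actual finite-$(d,n)$ $2K\times 2K$ determinant, which converges uniformly to $\det(\I_K-T(z)\Lb)$. That determinant is not literally of the form $\det(\I-\Lb_nT_n)$. Its zero orders equal the eigenvalue multiplicities because the factored-out noise determinant does not vanish for $z>\lambda_1(\mathbf{X}\mathbf{X}^\mathsf{T})$.

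(3) Singular values of $\mathbf{X}+t\Pb/\sqrt n$ are not monotone in $t$. Your borderline sandwich should therefore use $|\sigma_i(\mathbf{A})-\sigma_i(\mathbf{B})|\le\|\mathbf{A}-\mathbf{B}\|_{\mathrm{op}}$ instead of monotonicity. In fact the case $\lambda_i(\Lb)=\sqrt\gamma$ needs no special treatment. The factor $1-\sqrt\gamma\,T(z)$ has no zeros at positive distance from $[a,b]$, because:
\begin{itemize}
\item $0<T<1/\sqrt\gamma$ on $(b,\infty)$;
\item $T<0$ below $a$;
\item $T$ is non-real off the real axis.
\end{itemize}
Hurwitz therefore already excludes outliers beyond $b+\eta$ from that factor.
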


Importantly, the single-spike case of Theorem~\ref{thm:SMM_phase_transition} (equivalent to $K=1$) is known as the \ac{BBP} phase transition \cite{bbp_phase_transition} and is well-studied. Here, we extend the characterization of the phase transition to a mixture model with $K\geq1$ spikes, in which the underlying spikes almost surely tend to some correlation.

Also note that the Gram limiting matrix $\Lb$ in Theorem~\ref{thm:SMM_phase_transition} is positive semi-definite given that the matrix of correlations $\{\theta_{l,m}\}_{1\leq l,m\leq m}$ is the limit of Gram matrices.

Before addressing the proof of Theorem~\ref{thm:SMM_phase_transition}, we illustrate this result empirically. We conducted a set of synthetic experiments in which datasets were constructed from the \ac{SMM} model in \eqref{eq:model} with $n=2000$, $d=1000$, $K=2$, and $\gamma=0.5$, and for which we sampled the $\vb_1,\vb_2$ spikes according to \eqref{eq:vdefs} and computed the covariance matrix~\eqref{eq:covariance_def}. 
In Figure~\ref{fig:eig-smm}, we plot the eigenvalue distribution of the sample covariance matrix of a particular dataset. In this example dataset, the signal strengths of the underlying spikes were set to $\beta_1 = 4$ and $\beta_2 = 5$, the probabilities of encountering these spikes were not equal, namely $\pi_1 = 0.4$ and $\pi_2 = 0.6$, and the correlation between the spikes' signal content was $\theta = 0.4$. For these particular values of the hyperparameters, we observe two eigenvalues successfuly popping out of the bulk of the \acl{MP} distribution. In Figure~\ref {fig:smm-one-param-change}, we demonstrate that not all hyperparameter value combinations lead to two escaping eigenvalues, but only a subset of them. 
This highlights the importance of understanding the interplay between the different \ac{SMM} hyperparameters to determine under which circumstances spectral methods can detect the presence of underlying signals, and under which circumstances signals become effectively unrecoverable by spectral methods. 
This information will be crucial for effective experimental design in noisy environments and applications.

\begin{figure}
    \centering
    \begin{subfigure}{\textwidth}
        \centering
        \captionsetup{width=\textwidth}
        \includegraphics[width=0.6\textwidth]{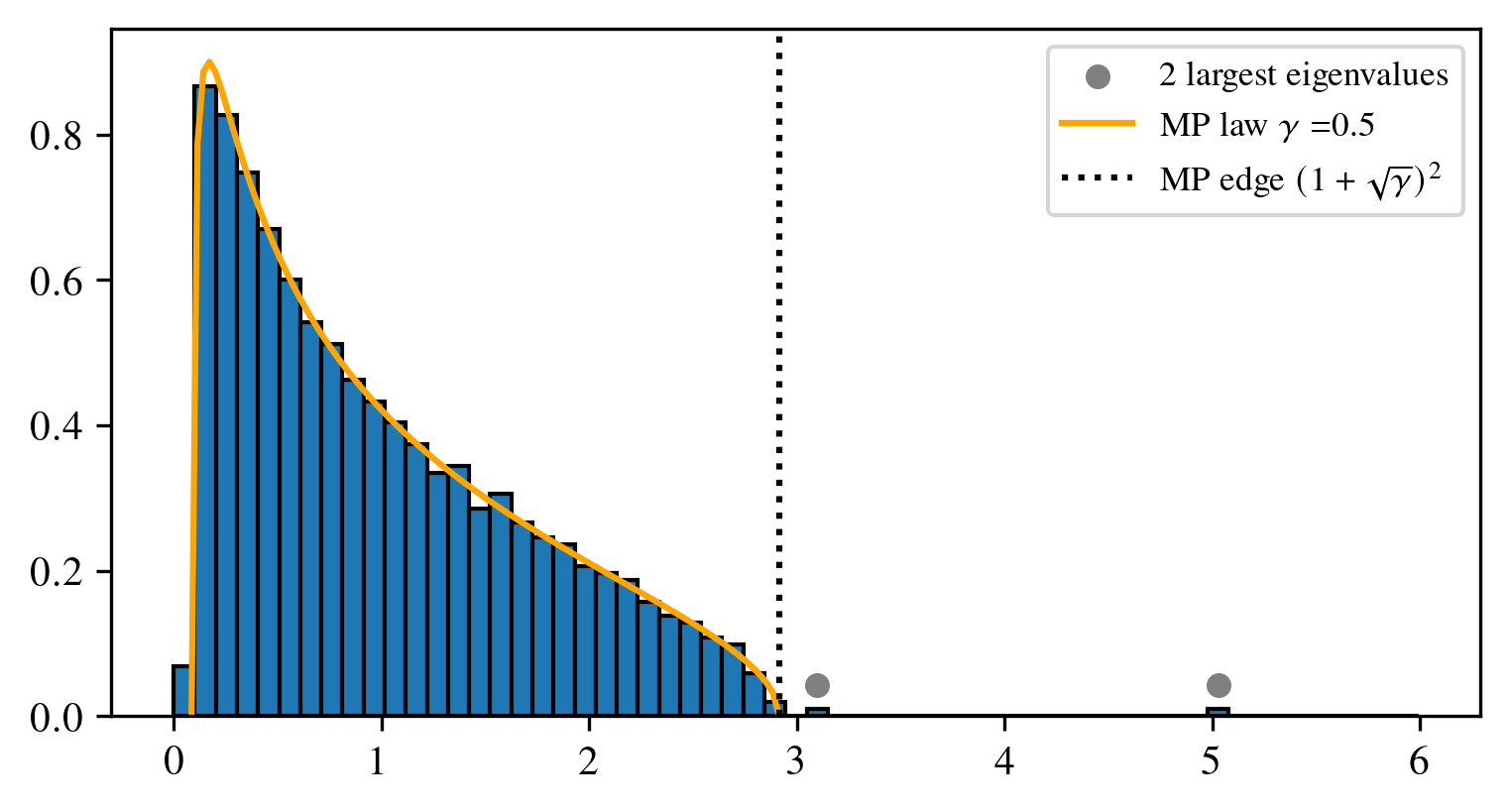}
        \caption{Example of two eigenvalues escaping the bulk of the \acf{MP} distribution. In this example dataset with $n=2000$, $d=1000$, $K=2$, and $\gamma=0.5$, the spike signal strengths were set to $\beta_1 = 4$ and $\beta_2 = 5$, the spike probabilities to $\pi_1 = 0.4$ and $\pi_2 = 0.6$, and the spike correlation to $\theta = 0.4$. This plot shows both the theoretical (orange line) and the empirical (blue bars) \acl{MP} distributions. For these particular hyperparameter values, we observe two eigenvalues (gray circles) successfully popping out of the bulk of the \acl{MP} distribution, reporting detectability.}
        \label{fig:eig-smm}
    \end{subfigure}
    \begin{subfigure}{\textwidth}
        \centering
        \includegraphics[width=0.9\textwidth]{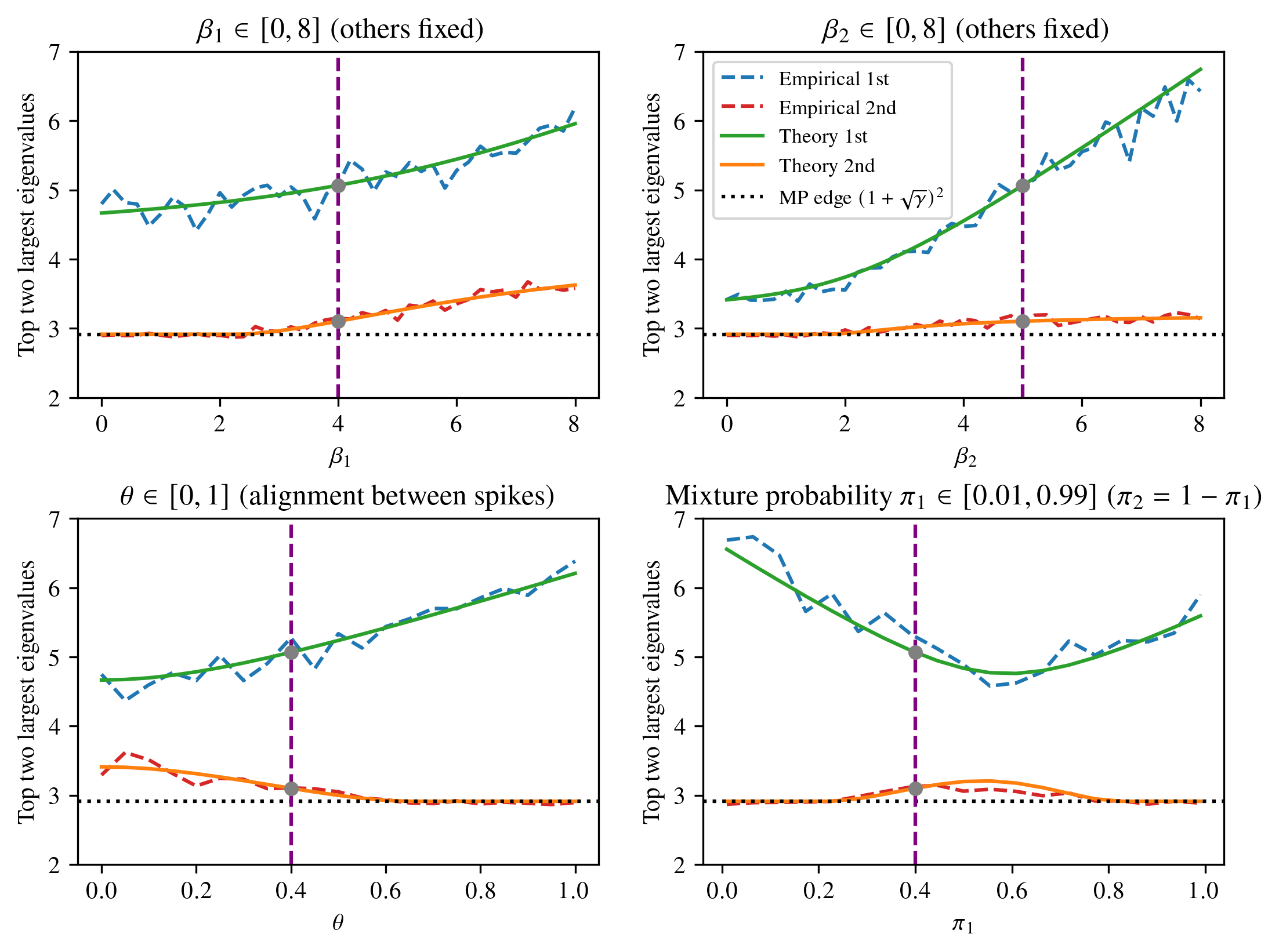}
        \caption{Effect of changing one of the hyperparameters. 
        We show how varying a single hyperparameter, while keeping all others the same as in panel~\ref{fig:eig-smm} impact the two largest eigenvalues. 
        We explore varying the signal strength of the first spike, $\beta_1$ (top-left), the signal strength of the second spike, $\beta_2$ (top-right), the correlation between the spikes, $\theta$ (bottom-left), and the spike probabilities, $\pi_1$ and $\pi_2$ (bottom-right).
        In all plots, the situation of panel~\ref{fig:eig-smm} is shown as a vertical purple line.
        We see that only a subset of all hyperparameter value combinations leads to two eigenvalues escaping the bulk of the \acl{MP} distribution.
        }   
        \label{fig:smm-one-param-change}
    \end{subfigure}
    \caption{Empirical demonstrations of the \ac{SMM} phase transition. 
    Panel~\ref{fig:eig-smm} shows one particular scenario, where two eigenvalues successfully escape the bulk of the \acl{MP} distribution, effectively reporting the spikes as recoverable from the noise.
    Panel~\ref{fig:smm-one-param-change} shows variations on the scenario of panel~\ref{fig:eig-smm}, demonstrating how the interplay between the different \ac{SMM} hyperparameters can result in either detectable or unrecoverable spikes.
    }
\end{figure}

\section{Decomposition of \ac{SMM} covariance matrices \& \ac{SMM} phase transition proof}
\label{sec:cov-matrices+proof}

In section~\ref{sec:decomposition-covariance}, we start by decomposing the covariance matrices that the \ac{SMM} can generate, and then lay out a proof strategy for its phase transition. In sections~\ref{sec:extreme-eig-val-after-K} to \ref{sec:perturbation-analysis}, we prove the necessary facts.

\subsection{Prelude (decomposition of the covariance matrices)}
\label{sec:decomposition-covariance}
Let $z_i \in\{1,\dots,K\}$ be a latent categorical variable indicating which component of the mixture (\textit{i.e.}, spike) was used to generate the observation $\y_i$. Thus, $\y_i | (z_i=k) \sim \mathcal{N}(0,\beta_k \vb_k \vb_k^\mathsf{T} + \I_d)$. With $\x_i \sim \mathcal{N}(0,\I_d)$,
the covariance matrix from the \ac{SMM}~\eqref{eq:model} can be expanded as: 
\begin{align}
    \mathbf{S}_{d,n} = \frac{1}{n} \sum_{i=1}^n \y_i \y_i^\mathsf{T} 
    &= \frac{1}{n}\sum_{k=1}^K (\beta_k \vb_k \vb_k^\mathsf{T} + \I_d)^{1/2} \left(\sum_{i, z_i=k} \x_i \x_i^\mathsf{T} \right) (\beta_k \vb_k \vb_k^\mathsf{T} + \I_d)^{1/2} \nonumber \\
    &= \sum_{k=1}^{K} \frac{n_k}{n} (\beta_k \vb_k \vb_k^\mathsf{T} + \I_d)^{1/2} \Z_k (\beta_k \vb_k \vb_k^\mathsf{T} + \I_d)^{1/2}, \label{eq:covariance}
\end{align}
where $n_k = |\{i\in [n] \: \text{ s.t. }\:  z_i = k\}|$. Conditioned on $n_k$, $ n_k \Z_k = \sum_{i,z_i=k} \x_i \x_i^\mathsf{T}$ follows a Wishart distribution $\mathcal{W}_d(n_k,\I_d)$.
By expanding the square root, we find:
\begin{align*}
    \Sb_{d,n} 
    &= \sum_{k=1}^K \frac{n_k}{n}\left(\I_d + (\sqrt{\beta_k+1}-1)\vb_k \vb_k^\mathsf{T}  \right) \Z_k \left(\I_d + (\sqrt{\beta_k+1}-1)\vb_k \vb_k^\mathsf{T}  \right) \\
    &= \Z + \sum_{k=1}^K \frac{n_k}{n}\left[ c_k \vb_k \vb_k^\mathsf{T} \Z_k + c_k \Z_k \vb_k \vb_k^\mathsf{T} + c_k^2 (\vb_k^\mathsf{T} \Z_k \vb_k) \vb_k \vb_k^\mathsf{T}\right]\\
    &= \Z + \sum_{k=1}^K \frac{n_k}{n} \left[\vb_k \vb_k^\mathsf{T} \Hb_k + \Hb_k \vb_k \vb_k^\mathsf{T}\right],
\end{align*}
with $n\Z = \sum_{k=1}^K n_k \Z_k = \sum_{i=1}^n \mathbf{z}_i \mathbf{z}_i^\mathsf{T} \sim \mathcal{W}_d(n,\I_d)$, $c_k = \sqrt{\beta_k+1}-1$, and $\Hb_k =(c_k \Z_k + \frac{c_k^2}{2}(\vb_k^\mathsf{T} \Z_k \vb_k) \I_d) \in \mathbb{R}^{d\times d}$.
If we define
\begin{align}
    &\Pb_1 = \begin{bmatrix}
        \sqrt{\frac{n_1}{n}} \Hb_1 \vb_1 & \sqrt{\frac{n_1}{n}} \vb_1 & \ldots & \sqrt{\frac{n_K}{n}} \Hb_K \vb_K & \sqrt{\frac{n_K}{n}} \vb_K
    \end{bmatrix} \in \mathbb{R}^{d\times 2K} \label{eq:P1},\\ 
    &\Pb_2 = \begin{bmatrix}
        \sqrt{\frac{n_1}{n}} \vb_1 & \sqrt{\frac{n_1}{n}} \Hb_1 \vb_1 & \ldots & \sqrt{\frac{n_K}{n}} \vb_K & \sqrt{\frac{n_K}{n}} \Hb_K \vb_K
    \end{bmatrix} \in \mathbb{R}^{d\times 2K} \label{eq:P2},
    % &\text{and } \A_i = c_i \Z_i + \frac{c_i^2}{2}\vb_i^\mathsf{T} \Z_i \vb_i \I \nonumber
\end{align}
we can decompose the covariance matrix $\mathbf{S}_{d,n}$ as follows: 
\begin{align}
    \mathbf{S}_{d,n} = \Z + \Pb_1\Pb_2^\mathsf{T}.
    \label{eq:covariance_int}
\end{align}

For a proof of \ac{SMM} phase transition, we first derive the extreme eigenvalues of \eqref{eq:covariance_int} in a manner similar to \cite{benaych2011eigenvalues}. Subsequently, we show that one can replace the $2K\times 2K$ limiting matrix by a simpler $ K\times K$ matrix as described in Theorem~\ref{thm:SMM_phase_transition}. The proof relies on the following facts: 
\begin{enumerate}
    \item Extreme eigenvalues after $K$ tend to the edge of the bulk of the \ac{MP} distribution;
    \label{pt:fact-1}
    \item Extreme eigenvalues that do not escape the bulk of the \ac{MP} distribution must tend to the edge of the bulk;
    \label{pt:fact-2}
    \item Similar to \cite{benaych2011eigenvalues}, we express extreme eigenvalues of $\Sb_{d,n}$ outside the bulk as $z$'s such that a $2K\times 2K$ matrix $\M_{d,n}(z)$ is singular;
    \label{pt:fact-3}
    \item The matrix $\M_{d,n}(z)$ converges almost surely to a matrix $\M(z)$;
    \label{pt:fact-4}
    \item The $z$'s in Fact~\ref{pt:fact-4} such that $\M(z)$ is singular are also the $z$'s such that the matrix $T(z)\mathbf{L}$ has an eigenvalue of $1$, for a $K\times K$ real matrix $\mathbf{L}$; and
    \label{pt:fact-5}
    \item The $z$'s such that $\M_{d,n}(z)$ is singular converge to the $z$'s such that $\M(z)$ is singular (continuity lemma, adapted from \cite{benaych2011eigenvalues}).
    \label{pt:fact-6}
\end{enumerate}
Fact~\ref{pt:fact-1} is proven in section~\ref{sec:extreme-eig-val-after-K}. In section~\ref{sec:extreme-eig-val-before-K}, we prove Facts~\ref{pt:fact-2} and \ref{pt:fact-4}, and recall Fact~\ref{pt:fact-3} from \cite{benaych2011eigenvalues}. Facts~\ref{pt:fact-5} and \ref{pt:fact-6} are proven in section~\ref{sec:schur-complement-equivalence} when $\Lb$ has a simple spectrum. In section~\ref{sec:perturbation-analysis}, we extend this result to the case where $\Lb$ may have equal eigenvalues.

\subsection{Limits of the extreme eigenvalues $\lambda_i(\Sb_{d,n})$ for $i>K$}
\label{sec:extreme-eig-val-after-K}

In this section, we show that the extreme eigenvalues of $\Sb_{d,n}$ after $K$ tend almost surely to the edge of the bulk of the \ac{MP} distribution. The proof relies on the following two lemmas.

\begin{lemma}
    \label{eq:negative-eigenvalues}
    For matrices $\Pb_1$ and $\Pb_2$, defined in \eqref{eq:P1} and \eqref{eq:P2}, the matrix $\Pb = \Pb_1 \Pb_2^\mathsf{T}$ has at most $K$ non-negative eigenvalues.
\end{lemma}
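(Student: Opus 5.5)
The plan is to write $\Pb$ as a congruence $\Pb = \mathbf{W}\mathbf{J}\mathbf{W}^\mathsf{T}$ with a small symmetric middle matrix $\mathbf{J}$, count the positive eigenvalues of $\mathbf{J}$, and transfer that count to $\Pb$ through a Courant–Fischer argument.

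\textbf{Interpretation of the statement.} As written, ``non-negative'' cannot hold literally: $\Pb$ has rank at most $2K$, so when $d>2K$ it has at least $d-2K$ zero eigenvalues. I will therefore prove the intended (and sufficient) version: $\Pb$ has at most $K$ \emph{strictly positive} eigenvalues, i.e.\ $\lambda_{K+1}(\Pb)\le 0$. This is what the subsequent argument needs, via Weyl's inequality $\lambda_i(\Sb_{d,n})\le \lambda_{i-K}(\Z)+\lambda_{K+1}(\Pb)$ for $i>K$.

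\textbf{Congruence factorization.} Set $\mathbf{h}_k := \Hb_k\vb_k$ and $w_k:=n_k/n\ge 0$. Multiplying out \eqref{eq:P1} and \eqref{eq:P2} column pair by column pair gives
\begin{align*}
\Pb=\Pb_1\Pb_2^\mathsf{T}=\sum_{k=1}^K w_k\left(\mathbf{h}_k\vb_k^\mathsf{T}+\vb_k\mathbf{h}_k^\mathsf{T}\right).
\end{align*}
In particular $\Pb$ is symmetric, since each $\Hb_k$ is symmetric. Let $\mathbf{W}=[\vb_1,\mathbf{h}_1,\dots,\vb_K,\mathbf{h}_K]\in\mathbb{R}^{d\times 2K}$ and let $\mathbf{J}$ be block diagonal with $2\times 2$ blocks $w_k\begin{bmatrix}0&1\\1&0\end{bmatrix}$. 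Then $\Pb=\mathbf{W}\mathbf{J}\mathbf{W}^\mathsf{T}$. Each block has eigenvalues $\pm w_k$, so $\mathbf{J}$ has at most $K$ positive eigenvalues. Blocks with $w_k=0$ contribute only zeros.

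\textbf{Transfer to $\Pb$.} Suppose, for contradiction, that $\Pb$ had $K+1$ positive eigenvalues. Let $V$ be the span of the corresponding eigenvectors, so $\dim V=K+1$ and $\x^\mathsf{T}\Pb\x>0$ for every nonzero $\x\in V$.
\begin{itemize}
\item Put $\y=\mathbf{W}^\mathsf{T}\x$, so that $\x^\mathsf{T}\Pb\x=\y^\mathsf{T}\mathbf{J}\y$.
\item The map $\x\mapsto\mathbf{W}^\mathsf{T}\x$ is injective on $V$: if $\mathbf{W}^\mathsf{T}\x=0$ then $\x^\mathsf{T}\Pb\x=0$, which forces $\x=0$.
\item Hence $\mathbf{W}^\mathsf{T}V$ is a $(K+1)$-dimensional subspace of $\mathbb{R}^{2K}$ on which $\mathbf{J}$ is positive definite.
\end{itemize}
By Courant–Fischer this gives $\lambda_{K+1}(\mathbf{J})>0$, contradicting the count of positive eigenvalues of $\mathbf{J}$. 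This is Sylvester's law of inertia in a form that allows $\mathbf{W}$ to be rank deficient.

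\textbf{Main obstacle.} Nothing here is technically heavy. The points needing care are the congruence step when $\mathbf{W}$ is not of full rank, which the injectivity argument handles, and being explicit about the positive-versus-non-negative wording. An alternative first attempt would be to note that each summand $w_k(\mathbf{h}_k\vb_k^\mathsf{T}+\vb_k\mathbf{h}_k^\mathsf{T})$ has one non-negative and one non-positive eigenvalue, $w_k(\vb_k\cdot\mathbf{h}_k\pm\|\mathbf{h}_k\|_2)$, and then apply Weyl's inequality $K-1$ times. The congruence route is cleaner and is the one I would use.
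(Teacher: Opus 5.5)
Your proof is correct and follows essentially the same route as the paper. Both write $\Pb$ as a congruence $\mathbf{X}\mathbf{J}\mathbf{X}^\mathsf{T}$ with a $2K\times 2K$ swap-type middle matrix, show $\x\mapsto\mathbf{X}^\mathsf{T}\x$ is injective on a subspace where $\x^\mathsf{T}\Pb\x>0$, and conclude because $\mathbf{J}$ has no positive subspace of dimension larger than $K$. The paper absorbs the weights $\sqrt{n_k/n}$ into $\mathbf{X}$ rather than into $\mathbf{J}$, and leaves the Courant--Fischer step implicit. Your reading of ``non-negative'' as ``strictly positive'' is what the paper's proof actually establishes, and it is what the Weyl step in Lemma~\ref{lem:weyl-lemma} needs, namely $\lambda_{K+1}(\Pb)\le 0$.
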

\begin{proof}
    Reordering the columns of $\Pb_1$ and $\Pb_2$, we can decompose $\Pb$ as
    \begin{align*}
        \Pb = \mathbf{X} \mathbf{J} \mathbf{X}^\mathsf{T},\quad \text{with}
    \end{align*}
    \begin{align*}
        \mathbf{X}&= 
        \begin{bmatrix}
            \vb_1 & \ldots & \vb_K & \A \vb_1 & \ldots & \A \vb_K    
        \end{bmatrix} \in \mathbb{R}^{d\times 2K},\quad \text{and} \\
        \mathbf{J} &= 
        \begin{bmatrix}
         0 & \I_K \\
         \I_K & 0
        \end{bmatrix} \in \mathbb{R}^{2K \times 2K}.
    \end{align*}
    Let $W \subseteq \mathbb{R}^{d}$ be a subspace for which $\y^\mathsf{T} \Pb \y >0$ for every non-zero $\y$.
    We start by proving that the mapping $T(\y) = \mathbf{X}^\mathsf{T} \y$ is injective on $W$. 
    For $\y_0 \in W$ such that $T(\y_0) = 0$, we have 
    \begin{align*}
        0 = \y_0^\mathsf{T} \mathbf{X} \mathbf{J} \mathbf{X}^\mathsf{T} \y_0 = \y_0^\mathsf{T} \Pb \y_0.
    \end{align*}
    The definition of $W$ implies that $\y_0 = 0$, proving that the linear mapping $T$ is injective on $W$. Denoting $T|_W$ as the image of $T$ restricted to $W$, this means that
    \begin{align}
        \dim(W) = \dim(T|_W) \label{eq:image-space}.
    \end{align}
    Furthermore, any $\mathbf{z} \in T|_W$ satisfies $\mathbf{z}^\mathsf{T} \mathbf{J} \mathbf{z} > 0$. The definition of $\mathbf{J}$ implies that $\dim(T|_W) \leq K$ and thus by \eqref{eq:image-space} that $\dim(W)\leq K$.
\end{proof}

\begin{lemma}[Bounded extreme eigenvalues after $K$]
\label{lem:weyl-lemma}
For $\mathbf{S}_{d,n} = \Z+\Pb_1\Pb_2^\mathsf{T}$, defined in \eqref{eq:covariance_int} and assuming that $d\geq 2K$, we have: 
\begin{align*}
    &\lambda_{i}(\Z) \leq \lambda_i(\Sb_{d,n}) \leq \lambda_{i-K}(\Z), &\forall i > K.
\end{align*}
\end{lemma}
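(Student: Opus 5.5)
The plan is to prove the two inequalities separately with the Courant--Fischer min--max characterization of $\lambda_i$, applied to the symmetric decomposition $\Sb_{d,n}=\Z+\Pb$ with $\Pb=\Pb_1\Pb_2^\mathsf{T}=\mathbf{X}\mathbf{J}\mathbf{X}^\mathsf{T}$ from Lemma~\ref{eq:negative-eigenvalues}. The upper bound should follow cleanly from that lemma. The lower bound is where I expect the real difficulty, and I am not certain the argument below closes it.

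\textbf{Upper bound $\lambda_i(\Sb_{d,n})\le\lambda_{i-K}(\Z)$.} By Lemma~\ref{eq:negative-eigenvalues}, any subspace on which $\y^\mathsf{T}\Pb\y>0$ for all $\y\neq 0$ has dimension at most $K$. Hence the span $E_+$ of the eigenvectors of $\Pb$ with positive eigenvalues satisfies $\dim E_+\le K$, and $\Pb\preceq 0$ on $E_+^\perp$, which has codimension at most $K$. Let $F$ be the span of the top $i-K-1$ eigenvectors of $\Z$, and set $V=E_+^\perp\cap F^\perp$. Then $\operatorname{codim}V\le i-1$. For unit $\y\in V$,
\begin{align*}
\y^\mathsf{T}\Sb_{d,n}\y=\y^\mathsf{T}\Z\y+\y^\mathsf{T}\Pb\y\le \y^\mathsf{T}\Z\y\le\lambda_{i-K}(\Z).
\end{align*}
The min--max formula $\lambda_i(\Sb_{d,n})=\min_{\operatorname{codim}V\le i-1}\max_{\y\in V,\|\y\|_2=1}\y^\mathsf{T}\Sb_{d,n}\y$ then gives the claim. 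The assumption $d\ge 2K$ only guarantees that the index sets are nondegenerate.

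\textbf{Lower bound $\lambda_i(\Z)\le\lambda_i(\Sb_{d,n})$.} The naive mirror argument does not give this. Since $\mathbf{J}$ has signature $(K,K)$, $\Pb$ also has at most $K$ negative eigenvalues, and intersecting the top-$i$ eigenspace of $\Z$ with the subspace where $\Pb\succeq 0$ loses up to $K$ dimensions. That yields only the weaker bound $\lambda_{i+K}(\Z)\le\lambda_i(\Sb_{d,n})$. To get the stated bound I would instead use the factorized form \eqref{eq:covariance},
\begin{align*}
\Sb_{d,n}=\sum_k \tfrac{n_k}{n}\A_k\Z_k\A_k,\qquad \A_k=(\I_d+\beta_k\vb_k\vb_k^\mathsf{T})^{1/2}\succeq\I_d,
\end{align*}
and write $\Sb_{d,n}=\Bb\Bb^\mathsf{T}$ with $\Bb=n^{-1/2}[\A_{z_1}\x_1,\dots,\A_{z_n}\x_n]$. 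Similarly $\Z=\mathbf{C}\mathbf{C}^\mathsf{T}$ with $\mathbf{C}=n^{-1/2}[\x_1,\dots,\x_n]$. The goal is an Ostrowski-type comparison $\lambda_i(\Bb\Bb^\mathsf{T})\ge\lambda_i(\mathbf{C}\mathbf{C}^\mathsf{T})$. For $K=1$ this holds exactly: $\A_1\Z\A_1$ is congruent to $\Z$ via $\A_1$ with $\lambda_{\min}(\A_1^2)=1$, and Ostrowski's theorem gives $\lambda_i(\A_1\Z\A_1)\ge\lambda_i(\Z)$. For general $K$, I would try to apply this componentwise, getting $\lambda_i(\A_k\Z_k\A_k)\ge\lambda_i(\Z_k)$ for each $k$. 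I would then transfer the comparison to the sum through the min--max over a common $i$-dimensional test subspace: the top-$i$ eigenspace $U$ of $\Z$, pushed through $\A_k^{-1}$.

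\textbf{Main obstacle.} The sticking point is that the subspaces $\A_k^{-1}U$ differ across $k$. Moreover, the cross terms in $\y^\mathsf{T}\Pb\y=2\sum_k\frac{n_k}{n}(\vb_k^\mathsf{T}\y)(\vb_k^\mathsf{T}\Hb_k\y)$ can be negative on $U$, so a single subspace need not work for all components at once. If the common-subspace argument fails, my fallback is as follows. Use the weaker sandwich $\lambda_{i+K}(\Z)\le\lambda_i(\Sb_{d,n})\le\lambda_{i-K}(\Z)$, which is fully rigorous by the argument above. Observe that, for every fixed $i$, both $\lambda_{i+K}(\Z)$ and $\lambda_{i-K}(\Z)$ converge almost surely to $(1+\sqrt\gamma)^2$, since $\Z$ is a white Wishart matrix. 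This fallback suffices for Fact~\ref{pt:fact-1}, the purpose this lemma serves in the paper, even though it is weaker than the lower bound as literally stated.
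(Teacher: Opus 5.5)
Your upper bound is the paper's argument in Courant--Fischer form. The paper applies Weyl's inequality $\lambda_i(\Sb_{d,n})\le\lambda_{i-K}(\Z)+\lambda_{K+1}(\Pb_1\Pb_2^\mathsf{T})$ and drops the last term by Lemma~\ref{eq:negative-eigenvalues}. That lemma's proof really shows at most $K$ \emph{positive} eigenvalues, which is the version you correctly use.

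For the lower bound, the paper writes $\lambda_i(\Z)\le\lambda_i(\Sb_{d,n})-\lambda_d(\Pb_1\Pb_2^\mathsf{T})$ and then sets $\lambda_d(\Pb_1\Pb_2^\mathsf{T})=0$ ``since $\operatorname{rank}(\Pb_1\Pb_2^\mathsf{T})\le 2K$''. That step is invalid for exactly the reason you give. Low rank can at best yield $\lambda_d(\Pb_1\Pb_2^\mathsf{T})\le 0$, which is the wrong direction. Moreover, $\Pb_1\Pb_2^\mathsf{T}=\mathbf{X}\mathbf{J}\mathbf{X}^\mathsf{T}$ with $\mathbf{J}$ of signature $(K,K)$ is indefinite in general.

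So the gap you could not close is not a flaw in your reasoning: the stated lower bound is false already for $K=2$, and no Ostrowski or common-subspace argument can rescue it. Here is a counterexample with $K=2$, $d=n=4$, $i=4$. Take $z=(1,1,1,2)$, $\vb_1=(\tfrac12,\tfrac{\sqrt3}{2},0,0)$, $\vb_2=(-\tfrac12,\tfrac{\sqrt3}{2},0,0)$, $\beta_1=8$ (so $c_1=2$), and any $\beta_2>0$. Let $\x_1=\mathbf{e}_1$, $\x_2=\mathbf{e}_3$, $\x_3=\mathbf{e}_4$, and $\x_4=\mathbf{q}:=(\tfrac{\sqrt3}{2},\tfrac12,0,0)$. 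With your $\A_k=\I_d+c_k\vb_k\vb_k^\mathsf{T}$, we get $\A_1\mathbf{e}_1=\mathbf{e}_1+\vb_1=\sqrt3\,\mathbf{q}$ and $\A_1\mathbf{e}_j=\mathbf{e}_j$ for $j=3,4$. Also $\A_2\mathbf{q}=\mathbf{q}$ because $\vb_2\perp\mathbf{q}$. Hence
\begin{align*}
4\Sb_{d,n}&=4\mathbf{q}\mathbf{q}^\mathsf{T}+\mathbf{e}_3\mathbf{e}_3^\mathsf{T}+\mathbf{e}_4\mathbf{e}_4^\mathsf{T}, & \lambda_4(\Sb_{d,n})&=0,\\
4\Z&=\mathbf{e}_1\mathbf{e}_1^\mathsf{T}+\mathbf{q}\mathbf{q}^\mathsf{T}+\mathbf{e}_3\mathbf{e}_3^\mathsf{T}+\mathbf{e}_4\mathbf{e}_4^\mathsf{T}, & \lambda_4(\Z)&=\tfrac14\bigl(1-\tfrac{\sqrt3}{2}\bigr)>0.
\end{align*}
By continuity, the strict inequality survives small perturbations of the $\x_i$, so it fails with positive probability under the Gaussian model. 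Padding group $1$ with further $\mathbf{e}_j$'s gives the same failure for every $d\ge 4$. This is precisely the obstruction you identified: $\A_1$ rotates a group-$1$ sample onto a group-$2$ sample and collapses a direction. That cannot happen for $K=1$, where your Ostrowski argument is correct.

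The right statement is therefore your sandwich $\lambda_{i+K}(\Z)\le\lambda_i(\Sb_{d,n})\le\lambda_{i-K}(\Z)$, valid for $i+K\le d$. Its lower half is the same inertia argument applied to $-\Pb_1\Pb_2^\mathsf{T}$, which has at most $K$ positive eigenvalues. This sandwich is all the paper actually needs.
\begin{itemize}
\item It gives Fact~\ref{pt:fact-1}, since $\lambda_{i\pm K}(\Z)\to(1+\sqrt\gamma)^2$ almost surely for every fixed $i$.
\item It also repairs the next subsection. There the paper invokes $\lambda_i(\Sb_{d,n})\ge\lambda_i(\Z)$ ``for $i\ge 1$'', i.e.\ even for $i\le K$, outside the lemma's own range, to get $\liminf\lambda_i(\Sb_{d,n})\ge b$. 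Your bound $\lambda_i(\Sb_{d,n})\ge\lambda_{i+K}(\Z)\to b$ delivers the same conclusion.
\end{itemize}
Drop the attempt at the literal lower bound and present the sandwich as the corrected lemma.
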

\begin{proof}
    The proof is based on Weyl's inequality: 
    \begin{align*}
        \lambda_i(\Sb_{d,n}) 
        &\leq \lambda_{i-K}(\Z) + \lambda_{K+1}(\Pb_1\Pb_2^\mathsf{T}) \\
        &\leq \lambda_{i-K}(\Z) &\text{using Lemma}~\ref{eq:negative-eigenvalues}, \text{and}\\
        \lambda_{i}(\Z) &\leq \lambda_{i}(\Sb_{d,n}) + \lambda_{1}(-\Pb_1\Pb_2^\mathsf{T}) \\
        &\leq \lambda_i(\Sb_{d,n}) - \lambda_{d}(\Pb_1\Pb_2^\mathsf{T}) \\
        &= \lambda_i(\Sb_{d,n}) &\text{since } \text{rank}(\Pb_1\Pb_2^\mathsf{T}) \leq 2K.
    \end{align*}
\end{proof}

Since the infimum and supremum of the bulk of the \acf{MP} distribution $\mu_{\text{MP}(\gamma)}$ correspond to $a= (1-\sqrt{\gamma})^2$ and $b=(1+\sqrt{\gamma})^2$, it follows from \cite{benaych2011eigenvalues} (section 6.2.1) that for every $i\geq 1$: 
\begin{align*}
    \lambda_i(\Z) &\xrightarrow[]{\text{a.s.}} b,  \\
    \lambda_{d-i+1}(\Z) &\xrightarrow[]{\text{a.s.}} a.
\end{align*}
Combining this with Lemma~\ref{lem:weyl-lemma}, we obtain for every $i > K$: 
\begin{align*}
    \lambda_i(\Sb_{d,n}) \xrightarrow[]{\text{a.s.}} b, \\
    \lambda_{d-i+1}(\Sb_{d,n}) \xrightarrow[]{\text{a.s.}} a.
\end{align*}
In other words, all extreme eigenvalues of $\Sb_{d,n}$ beyond the first $K$ almost surely tend to the edge of the bulk of the \ac{MP} distribution.

\subsection{Limits of the extreme eigenvalues $\lambda_i(\Sb_{d,n})$ for $i\leq K$}
\label{sec:extreme-eig-val-before-K}

Using the lower bound from Lemma~\ref{lem:weyl-lemma}, we know for $i \geq 1$ that 
\begin{align}
    \lambda_i(\Sb_{d,n}) \geq \lambda_i(\Z).
    \label{eq:lower-bound-extreme_Sn}
\end{align}
As in \cite{benaych2011eigenvalues}, the convergence of extreme eigenvalues of $\Z$ together with \eqref{eq:lower-bound-extreme_Sn} implies that
\begin{align*}
    \liminf_{n\to \infty} \lambda_i(\Sb_{d,n}) \geq b.
\end{align*}
This shows that extreme eigenvalues of $\Sb_{d,n}$ that do not escape the bulk of the \ac{MP} distribution must necessarily converge to its edge $b$.

In the following, we are interested in extreme eigenvalues that lie outside the bulk of the \ac{MP} distribution.
Similar to \cite{benaych2011eigenvalues}, we will show that the extreme eigenvalues of $\Sb_{d,n}$ are the $z$'s such that a certain matrix $\mathbf{M}_{d,n}(z)$ is singular.
It is known that the eigenvalues of the matrix $\Z$ weakly converge to the \ac{MP} distribution. Any $z \notin \{\lambda_1(\Z), \ldots, \lambda_n(\Z)\}$ is an eigenvalue of $\Sb_{d,n}$ iff: 
\begin{align*}
    0 &= \det(z \I_d - \Sb_{d,n}) \\
    &= \det(z \I_d-\Z)\det(\I_d - (z \I_d-\Z)^{-1}\Pb_1 \Pb_2^\mathsf{T}) &\text{since } z \notin \{\lambda_1(\Z), \ldots, \lambda_n(\Z)\}, \\
    &= \det(\I_{2K} -\Pb_2^\mathsf{T}(z\I_d-\Z)^{-1}\Pb_1) &\text{using Sylvester's determinant identity.}
\end{align*}
This shows that any $z \notin \{\lambda_1(\Z), \ldots, \lambda_n(\Z)\}$ is an eigenvalue of $\Sb_{d,n}$ if and only if the $2K\times 2K$ matrix $\mathbf{M}_{d,n}(z) = \I_{2K} - \Pb_2^\mathsf{T}(z\I_d-\Z)^{-1}\Pb_1$ is singular.
Since we know that $\lambda_d(\Z)$ and $\lambda_1(\Z)$ converge to the edge of the bulk of the \ac{MP} distribution, the eigenvalues outside the bulk are the $z$'s from the set:
\begin{align}
 \mathcal{K}(\eta) &:= \left\{z\in \mathbb{C}; d(z,[a,b])\geq \eta\right\},   &\forall \eta>0.
 \label{eq:uniform-convergence-set}
\end{align}
Using Theorem~\ref{thm:convergence_invariant_bounded_mat} together with Lemmas~\ref{lem:limit_type1} and \ref{lem:limit_type2}, we get that the matrix $\M_{d,n}(z)$ converges almost surely to a matrix $\M(z)$ as $n,d \to \infty$ such that $\frac{d}{n}\to \gamma$.

\begin{align*}
    {\M_{d,n}(z)}_{2l-1,2m-1} &= \ind_{l=m} - \frac{\sqrt{n_l n_m}}{n} \vb_l^\mathsf{T} (z \I_d-\Z)^{-1} \Hb_m \vb_m \\
    &= \ind_{l=m} - \frac{\sqrt{n_l n_m}}{n}\left[ c_m \vb_l^\mathsf{T} (z \I_d-\Z)^{-1}\Z_m \vb_m + \frac{c_m^2}{2} \vb_l^\mathsf{T}\Z_m\vb_l \vb_l^\mathsf{T} (z \I_d-\Z)^{-1}\vb_m \right]\\
    &\to \ind_{l=m} - \sqrt{\pi_l \pi_m} \theta_{l,m} \left[c_m T(z) + \frac{c_m^2}{2}T(z)(1-\gamma G(z))\right] \\
    &= \ind_{l=m} - \sqrt{\pi_l \pi_m} \theta_{l,m} T(z) c_m \left[1 + \frac{c_m}{2}(1-\gamma G(z))\right],\\
    %%%%%%%%%%%%%%%%%%%%%%%%%%%%%%
    {\M_{d,n}(z)}_{2l-1,2m} &= - \frac{\sqrt{n_l n_m}}{n} \vb_l^\mathsf{T} (z \I_d-\Z)^{-1} \vb_m \\
    &\to -\sqrt{\pi_l \pi_m} \theta_{l,m} G(z),\\
    %%%%%%%%%%%%%%%%%%%%%%%%%%%%%%%
    {\M_{d,n}(z)}_{2l,2m-1} &= - \frac{\sqrt{n_l n_m}}{n} \vb_l^\mathsf{T} \Hb_l (z \I_d-\Z)^{-1} \Hb_m \vb_m \\
    &= -\frac{\sqrt{n_l n_m}}{n} c_l c_m \vb_l^\mathsf{T} \left(\Z_l + \frac{c_l}{2}\vb_l^\mathsf{T} \Z_l \vb_l \I_d \right)(z \I_d-\Z)^{-1}\left(\Z_m + \frac{c_m}{2} \vb_m^\mathsf{T} \Z_m \vb_m \I_d\right) \vb_m \\
    &\to - \sqrt{\pi_l \pi_m} \theta_{l,m} c_l c_m \left[ T(z)\left(\frac{\gamma}{\pi_l}\delta_{l=m} + \frac{1}{1-\gamma G(z)}\right) + \frac{c_l+c_m}{2} T(z) + \frac{c_lc_m}{4}G(z)\right] \\
    &= - \sqrt{\pi_l \pi_m} \left[ \theta_{l,m} c_l c_m \frac{T(z) \gamma }{\pi_l}\delta_{l=m} + \theta_{l,m} c_l c_m \frac{T^2(z)}{G(z)}\left(1 + \frac{c_l}{2}(1-\gamma G(z))\right)\left(1 + \frac{c_m}{2}(1-\gamma G(z))\right)\right]\\
    &\text{using the relation } G(z) = T(z)(1-\gamma G(z)) \text{ in the last step, and}\\
    %%%%%%%%%%%%%%%%%%%%%%%%%%%%%%%
    {\M_{d,n}(z)}_{2l,2m} &= \ind_{l=m} - \frac{\sqrt{n_l n_m}}{n} \vb_l^\mathsf{T} \Hb_l (z \I_d-\Z)^{-1} \vb_m \\
    &= \ind_{l=m} - \frac{\sqrt{n_l n_m}}{n} \left[c_l \vb_l^\mathsf{T} \Z_l(z\I_d-\Z)^{-1} \vb_m + \frac{c_l^2}{2} \vb_l^\mathsf{T} \Z_l\vb_l \vb_l^\mathsf{T} (z \I_d-\Z)^{-1}\vb_m \right]\\
    &\to \ind_{l=m} - \sqrt{\pi_l \pi_m} \theta_{l,m} \left[c_l T(z) + \frac{c_l^2}{2}T(z)(1-\gamma G(z))\right] \\
    &= \ind_{l=m} - \sqrt{\pi_l \pi_m} \theta_{l,m} c_l T(z) \left[1 + \frac{c_l}{2}(1-\gamma G(z))\right],
\end{align*}
with $G(z)$ the Cauchy Transform of the \acl{MP} distribution: 
\begin{align*}
    G(z) = \int \frac{1}{z-t}\text{d}\mu_{\text{MP}(\gamma)}(t).
\end{align*}
Using $a_i = c_i(1+\frac{c_i}{2}(1-\gamma G(z))$, we define the matrices $\A_i, \Bb_{i,j} \in \mathbb{R}^{2 \times 2}$ and the vectors $\cb_i, \db_i \in \mathbb{R}^{2}$ as follows: 
\begin{align*}
    \A_i &:= \begin{bmatrix}
        a_i T(z) & G(z) \\
        c_i^2 T(z)\gamma/\pi_i + a_i^2\frac{T^2(z)}{G(z)} & a_i T(z)
    \end{bmatrix}, \\
    \Bb_{i,j} 
    &:= \begin{bmatrix}
        a_j T(z) & G(z) \\
        a_i a_j T^2(z)/G(z) & a_i T(z)
    \end{bmatrix} = \cb_i \db_j^\mathsf{T}, \\
    \cb_i &:= \begin{bmatrix}
        1 \\
        a_i T(z)/G(z)
    \end{bmatrix} , 
    \db_i := \begin{bmatrix}
        a_i T(z) \\
        G(z)
    \end{bmatrix}.
\end{align*}
With this notation, we obtain that 
$\M_{d,n}(z) \xrightarrow[d,n\to \infty, \frac{d}{n}\to \gamma]{\text{a.s.}}\M(z)$, with
\begin{align}
    \M(z) &:=\I_{2K} - \begin{bmatrix}
          \pi_1 \A_1 & \sqrt{\pi_1 \pi_2} \theta_{1,2} \Bb_{12} & \ldots & \sqrt{\pi_1 \pi_K} \theta_{1,K} \Bb_{1K} \\
         \sqrt{\pi_1 \pi_2} \theta_{2,1} \Bb_{21} & \pi_2 \A_2 & \ldots & \sqrt{\pi_2 \pi_K} \theta_{2,K} \Bb_{2K} \\
         \ldots & \ldots \\
         \sqrt{\pi_1 \pi_K} \theta_{K,1} \Bb_{K,1} & \ldots & \ldots &  \I - \pi_K \A_K
    \end{bmatrix} := \I_{2K}-\Lb
    \label{eq:limit_M_matrix}.
\end{align}
Moreover, the convergence of $\M_{d,n}$ is uniform on $\mathcal{K}(\eta)$ for all $\eta>0$, \eqref{eq:uniform-convergence-set}.

\subsection{Equivalence using determinant relation}
\label{sec:schur-complement-equivalence}
In this section, we want to find $z$ such that the matrix $\M(z)$ in \eqref{eq:limit_M_matrix} is singular.
We start by defining the matrix $\tilde{\M}(z) = \I_{2K}-\tilde{\Lb}(z)$ by its entries: 
\begin{align*}
    [\tilde{\M}(z)]_{i,j} := (-1)^{i+j} [\M(z)]_{i,j}.
\end{align*}
By design, the matrix $\tilde{\M}(z)$ has the same determinant as the matrix $\M(z)$ since it is obtained by multiplying all even rows and even columns by $-1$.
We can rewrite the absolute value of the determinant as:
\begin{align}
    \left|\det(\M(z))\right| &= \left|\det(\M(z)\tilde{\M}(z))\right|^{1/2} \nonumber\\
    &= \left|\det\left((\I_{2K} - \Lb(z))(\I_{2K}-\tilde{\Lb}(z))\right)\right|^{1/2} \nonumber\\
    &= \left|\det\left(\I_{2K} - \left(\Lb(z)+\tilde{\Lb}(z) - \Lb(z)\tilde{\Lb}(z)\right)\right)\right|^{1/2}
    \label{eq:final-det-simplifications}.
\end{align}

Defining $\mathbf{W} := \Lb(z)+\tilde{\Lb}(z) - \Lb(z)\tilde{\Lb}(z)$ results through computation (and cancelation) in: 
\begin{align*}
    \mathbf{W}_{[2i:2i+2,2i:2i+2]}
    &= \pi_i \beta_i T(z) \begin{bmatrix}
        1 & 0 \\
        0 & 1
    \end{bmatrix}, &i\in[0,K-1],\\ 
    \mathbf{W}_{[2i:2i+2,2j:2j+2]}
    &= \sqrt{\pi_i \pi_j} \theta_{i,j}\begin{bmatrix}
       \beta_j T(z)  & 0 \\
        T^{2}(z)(c_i^2a_j - c_j^2a_i) & \beta_i T(z)
    \end{bmatrix}, &i,j \in [0,K-1], i\neq j.\\ 
\end{align*}
We finish by linking the matrix $\mathbf{W} \in \mathbb{C}^{2K\times 2K}$ to the matrix $\Lb\in \mathbb{R}^{K\times K}$ defined in Theorem~\eqref{thm:SMM_phase_transition}.
Let 
\begin{align*}
    \mathbf{R} := \text{Diag}\left(\sqrt{\beta_1},\sqrt{\beta_1},\sqrt{\beta_2},\sqrt{\beta_2}, \ldots, \sqrt{\beta_K},\sqrt{\beta_K}\right) \in \mathbb{R}^{2K\times 2K}
\end{align*} 
be a rescaling matrix.
Furthermore, let $\mathbf{Q} \in \mathbb{R}^{2K\times 2K}$ be a permutation matrix such that multiplication with $\mathbf{Q}$ on the left moves all even rows to the first $K$ rows and multiplication with $\mathbf{Q}^\mathsf{T}$ on the right moves all even columns to the first $K$ columns. More explicitly, with the permutation $\tilde{\pi}$ defined as 
\begin{align*}
    \tilde{\pi}(i) = 
    \begin{cases}
        2i &\text{if } i \in [0,K-1] \\
        2(i-K) +1 &\text{if } i \in [K , 2K-1]
    \end{cases}
    ,
\end{align*}
the permutation matrix $\mathbf{Q}$ can be defined as
\begin{align*}
    \mathbf{Q}_{i,j} 
    &= \begin{cases}
        1 & \text{if } j = \tilde{\pi}(i) \\
        0 & \text{otherwise}
    \end{cases},
\end{align*}
or equivalently:
\begin{align*}
    \mathbf{Q} =
    \begin{bmatrix}
        1 & 0 & 0 & 0 & \ldots & 0 & 0\\
        0 & 0 & 1 & 0 & \ldots & 0 & 0\\
        &&&\ldots \\
        0 & 0 & 0 & 0 & \ldots & 1 & 0\\
        0 & 1 & 0 & 0 & \ldots & 0 & 0 \\
        0 & 0 & 0 & 1 & \ldots & 0 & 0 \\
        &&& \ldots   \\
        0 & 0 & 0 & 0 & \ldots & 0 & 1 
    \end{bmatrix}.
\end{align*}
The matrix $\mathbf{Q}\mathbf{R} \mathbf{W} \mathbf{R^{-1}\mathbf{Q^\mathsf{T}}}$ is then block lower triangular with all diagonal blocks equal and corresponding to $T(z)\Lb$.
Knowing this, equation~\eqref{eq:final-det-simplifications} can then be simplified to:
\begin{align}
    \left|\det(\M(z))\right| &= \left|\det(\I_{2K} - \mathbf{W})\right|^{1/2} \nonumber\\
    &=\left|\det\left(\I_{2K} - \mathbf{Q}\mathbf{R} \mathbf{W} \mathbf{R^{-1}\mathbf{Q^\mathsf{T}}}\right)\right|^{1/2} \nonumber\\
    &= \left|\det(\I_{K}-T(z)\Lb)^2\right|^{1/2} \nonumber\\
    &= \left|\det(\I_{K}-T(z)\Lb)\right| \nonumber\\
    &= \prod_{i=1}^K \left|1-T(z)\lambda_i(\Lb)\right|.
    \label{eq:det_final_equation}
\end{align}
The phase transition for the extreme eigenvalues emerges since the \ac{MP} distribution is compactly supported on $[a,b]$, and thus $T(z)$ is defined outside $[a,b]$. 
Let
\begin{align*}
    T(a^-) &:= \lim_{z \to a} T(z) = -1/\sqrt{\gamma}, \text{and} \\
    T(b^+) &:= \lim_{z \to b} T(z) = 1/\sqrt{\gamma}
\end{align*}
with $T(z)$ being a homeomorphism from $(-\infty, a)$ to $(T(a^-),0)$ and from $(b,\infty)$ to $(0,T(b^+))$.
Since $\lambda_i(\Lb)\geq 0$, there exists a solution to $T(z) = \frac{1}{\lambda_i(\Lb)}$ only when $\frac{1}{\lambda_i(\Lb)} < T(b^+)$, which is equivalent to $\lambda_i(\Lb) > \sqrt{\gamma}$.\\

When the eigenvalues of the limiting matrix $\Lb$ are distinct, the conclusion of Theorem~\ref{thm:SMM_phase_transition} follows from an application of
Lemma~\ref{lem:convergence_lemma}. 
When the eigenvalues are not distinct, we show in section~\ref{sec:perturbation-analysis} that, using a small perturbation, the conclusions of Theorem~\ref{thm:SMM_phase_transition} remain valid.

\begin{lemma}[Continuity lemma, adapted from Lemma 6.1 in \cite{benaych2011eigenvalues}] 
\label{lem:convergence_lemma}
    For $\lambda_1(\Lb) > \ldots > \lambda_r(\Lb) > \sqrt{\gamma}$ with $r\leq K$, there exists $r$ complex sequences, ${z_1}{(d,n)}, \ldots, {z_r}{(d,n)}$, such that $\operatorname{Re}({z_1}{(d,n)}) \geq \ldots \geq \operatorname{Re}({z_r}{(d,n)})$
    converge respectively to the eigenvalues outside the bulk of the \ac{MP} distribution:
    \begin{align*}
        T^{-1}(1/\lambda_1(\Lb)), \ldots ,T^{-1}(1/\lambda_r(\Lb)).
    \end{align*}
\end{lemma}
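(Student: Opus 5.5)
We will follow the argument of Lemma 6.1 in \cite{benaych2011eigenvalues}, which is a Rouch\'e/Hurwitz-type statement for analytic functions. Set $f_{d,n}(z) := \det(\M_{d,n}(z))$ and $f(z) := \det(\M(z))$. For $z \in \mathcal{K}(\eta)$, $f_{d,n}$ is analytic in $z$, since the resolvent $(z\I_d-\Z)^{-1}$ is analytic once $\lambda_1(\Z),\lambda_d(\Z)$ are within $\eta/2$ of $[a,b]$, which holds almost surely for large $n$. The limit $f$ is analytic on $\mathbb{C}\setminus[a,b]$, because $T$ and $G$ are.

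The uniform almost sure convergence $\M_{d,n}\to\M$ on $\mathcal{K}(\eta)$, established above, gives uniform convergence $f_{d,n}\to f$ on $\mathcal{K}(\eta)$, since the determinant is a polynomial in bounded entries. From \eqref{eq:det_final_equation}, the zeros of $f$ outside $[a,b]$ are exactly the points $\rho_i := T^{-1}(1/\lambda_i(\Lb))$ with $\lambda_i(\Lb)>\sqrt{\gamma}$. We need to check that $T(z)\lambda_i(\Lb)=1$ has no nonreal solution. This holds because $\operatorname{Im}T(z)\neq 0$ off the real axis, since $\operatorname{Im}\int t/(z-t)\,d\mu$ has the sign of $-\operatorname{Im} z$ and $\mu_{\mathrm{MP}(\gamma)}$ is not concentrated at $0$. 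For $z<a$, $T(z)<0$, so there is no solution there either. By the stated monotonicity of $T$ on $(b,\infty)$ and the distinctness assumption, the points $\rho_1>\dots>\rho_r>b$ are distinct.

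Next we determine the multiplicity of each zero. Near $\rho_i$, $f$ equals, up to modulus, $\prod_j(1-T(z)\lambda_j(\Lb))$, with only the $j=i$ factor vanishing. Moreover $T'(\rho_i)\neq0$, since $T'(z)=-\int t/(z-t)^2\,d\mu<0$ on $(b,\infty)$. We will need the actual analytic $f$ rather than its modulus. We expect $\det\M(z) = \pm\det(\I_K-T(z)\Lb)$ exactly, which we would check through the block-triangular structure. Failing that, we can argue directly: $\det\M\cdot\det\tilde\M=\det(\M)^2=\det(\I_K-T\Lb)^2$ by \eqref{eq:det_final_equation} without absolute values, so $\det\M$ has a simple zero at $\rho_i$.

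We then choose disjoint small discs $D_i$ around the $\rho_i$, contained in $\{\operatorname{Re} z> b+\eta\}$ for a small $\eta$. We also choose a large compact region $\Omega\subset\mathcal{K}(\eta)$, noting that $|f|\to1$ as $|z|\to\infty$ because $T,G\to0$, so $f$ has no zeros far away. On $\Omega\setminus\bigcup_i D_i$, $|f|$ is bounded below by a positive constant. Uniform convergence then implies that $f_{d,n}$ has no zeros there for large $n$. Hurwitz's theorem (Rouch\'e on $\partial D_i$) implies that $f_{d,n}$ has exactly one zero $z_i(d,n)$ in each $D_i$. Because the radii can be taken arbitrarily small, $z_i(d,n)\to\rho_i$. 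Ordering by real parts is automatic for large $n$, since the discs are disjoint and ordered along the real axis.

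Two points remain. First, we must control zeros of $f_{d,n}$ for $|z|$ large uniformly, using $\|\Z\|_{\mathrm{op}}$ bounded, which makes $\M_{d,n}(z)\to\I$ as $|z|\to\infty$ uniformly in $n$. Second, we must pin down the analytic sign and multiplicity of $\det\M$. The sign and multiplicity issue is the main obstacle, since the paper only computes $|\det\M|$. We would handle it via the $\det(\M)^2$ identity, which is sign-free. Finally, because $\Sb_{d,n}$ is symmetric, the $z_i(d,n)$ are real eigenvalues. Together with Facts 1 and 2, which force all other outlying eigenvalues to the edge $b$, this identifies $z_i(d,n)=\lambda_i(\Sb_{d,n})$.
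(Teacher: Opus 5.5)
Your proposal is correct and follows essentially the same route as the paper: use $|\det\M(z)|\ge 2^{-K}$ for large $|z|$ and uniform convergence to confine the zeros of $\det\M_{d,n}$ to a compact set, then count zeros near each $T^{-1}(1/\lambda_i(\Lb))$ with the argument principle (your Hurwitz/Rouch\'e step) on shrinking circles. Your extra checks fill in details the paper's proof leaves implicit: the sign-free identity $\det(\M)^2=\det(\I_K-T(z)\Lb)^2$ gives an analytic rather than absolute-value relation, $T'<0$ on $(b,\infty)$ makes each zero simple, and you rule out nonreal roots.
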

\begin{proof}
    We start by proving that the $z$'s we are looking for do not escape the bulk of the \ac{MP} distribution at infinity and that they lie in a compact set.
    Since $T(z) \xrightarrow[]{|z|\to \infty} 0$, there exists a real scalar $R \geq (1+\sqrt{\gamma})^2$ such that for every $z\in \mathbb{C},|z|\geq R$: 
    \begin{align*}
        |T(z)|\leq \frac{1}{2\lambda_1(\Lb)}.
    \end{align*}
    As such, for all $i\in [K], |z|\geq R$ we have:
    \begin{align*}
        |1 - T(z)\lambda_i(\Lb)| \geq \frac{1}{2}.
    \end{align*}
    Combining this with \eqref{eq:det_final_equation}, we get for $|z|\geq R$ that
    \begin{align}
        |\det\left(\M(z)\right)|\geq 2^{-K}.
        \label{eq:lower_bound_det}
    \end{align}
    Moreover, we have shown in section~\ref{sec:extreme-eig-val-before-K} that $\M_{d,n}(z)$ converges uniformly on $\mathcal{K}(\eta)=\left\{z \in \mathbb{C};\quad d(z,[a,b])\geq \eta\right\}$ for all $\eta >0$.
    Together with \eqref{eq:lower_bound_det}, this uniform convergence implies that for a large enough $n$ and $d$, the $z$'s such that $\M_{d,n}(z)$ is singular are supported on a compact subset $\mathcal{K}_1 \subset \mathcal{K}(\eta)$. This ensures that $z$ does not run away to infinity.

    For $l\neq r \in \mathcal{K}_1$, we denote $\text{Card}_{l,r}$ as the number of $z$'s in $B(l,r)$, the complex ball of diameter $[l,r]$, such that $\det(\M(z))=0$. Similarly, $\text{Card}_{l,r}(d,n)$ denotes the numbers of $z$'s such that $\det(\M_{d,n}(z))=0$.
    With $l,r\notin \{\lambda_1(\Lb),\ldots, \lambda_r(\Lb)\}$ ,$\gamma$ is a circle of diameter $[l,r]$.
    We finish by showing that for every $l,r$, $\text{Card}_{l,r}(d,n)$ converges almost surely to $\text{Card}_{l,r}$:
    \begin{align*}
        \text{Card}_{l,r} = \frac{1}{2\pi i}\int_\gamma\frac{\partial_z \det \M(z)}{\det \M(z)} dz = \lim_{\substack{n\to\infty,\\d\to\infty,\\\frac{d}{n}\to\gamma}} \frac{1}{2\pi i}\int_\gamma\frac{\partial_z \det \M_{d,n}(z)}{\det \M_{d,n}(z)} dz.
    \end{align*}
    This shows that for $d$ and $n$ large enough, there exist $r$ distinct solutions to $\det\left( \M_{d,n}(z)\right)=0$, denoted as $z_1(d,n), \ldots, z_r(d,n)$ and with $\operatorname{Re}({z_1}{(d,n)}) \geq \ldots \geq \operatorname{Re}({z_r}{(d,n)})$. Furthermore, since we can choose $l$ and $r$ arbitrarily close to each other, we must have
    \begin{align*}
        z_{i}(d,n) &\xrightarrow[\substack{n\to\infty,\\d\to\infty,\\\frac{d}{n}\to\gamma}]{\mathrm{a.s.}} T^{-1}\left(\frac{1}{\lambda_i(\Lb)}\right), & \forall i \in [1,r].
    \end{align*}
\end{proof}

\subsection{Perturbation analysis}
\label{sec:perturbation-analysis}

The purpose of this section is to extend Theorem~\ref{thm:SMM_phase_transition}
to the case where $\Lb$ has repeated eigenvalues.
We do so by introducing an arbitrarily small perturbation of the spikes $\vb_1, \ldots, \vb_K$ that yields a limiting matrix with a simple spectrum, by subsequently applying the results from section~\ref{sec:schur-complement-equivalence}, and finally by removing the perturbation by continuity.

Let $\wb_1, \ldots, \wb_K \sim \mathcal{N}(0,\frac{1}{d}\I_d)$ be $K$ independent vectors sampled from a multivariate Gaussian distribution.
For arbitrary real scalars $t_1, \ldots, t_K \in \mathbb{R}$, we construct perturbed spike vectors $\tilde{\vb}_i$ as follows:
\begin{align}
    \tilde{\vb}_i &:= \frac{\vb_i + t_i \wb_i}{\|\vb_i + t_i \wb_i\|}, &\forall i \in [K], \label{eq:perturbated_v}\\
    \tilde{\beta}_i &:= (1+t_i^2) \beta_i, &\forall i \in [K]. \label{eq:perturbated_beta}
\end{align}
For $i\neq j$, we get
\begin{align*}
    \tilde{\vb}_i^\mathsf{T}\tilde{\vb}_j \xrightarrow[d\to \infty]{\text{a.s.}} \frac{\theta_{i,j}}{\sqrt{(1+t_i^2)(1+t_j^2)}}:= \tilde{\theta}_{i,j},
\end{align*}
and 
\begin{align}
    \vb_i^\mathsf{T}\tilde{\vb_i} \xrightarrow[d\to\infty]{\text{a.s.}} \frac{1}{\sqrt{1+t_i^2}}.
    \label{eq:perturbation-corellation}
\end{align}
With $\tilde{\Db} = \text{Diag}(\pi_1\tilde{\beta_1}, \ldots, \pi_K\tilde{\beta}_K)$ and $\tb = (t_1,\ldots, t_K)$, we define
\begin{align*}
    \tilde{\Lb}(\tb) := \tilde{\Db}^{1/2} \tilde{\mathbf{\theta}} \tilde{\Db}^{1/2}.
\end{align*}
The discriminant $\Delta(\tb)$ of the characteristic polynomial of $\tilde{\Lb}(\tb)$ is a real analytic function of the entries of $\tilde{\Lb}(\tb)$.
As $\tilde{\Lb}(\tb)$ depends on $\tb$ only in the diagonal, using the strengthened form of the Gershgorin circle theorem, we can choose $\tb$ such that the Gershgorin discs do not intersect. Hence, $\tilde L(\tb)$ has a simple spectrum and
$\Delta(\tb)\neq 0$. Therefore, $\Delta$ is not identically zero.
Since the zero set of a not identically zero, real analytic function has an empty interior, the set of perturbations yielding a simple spectrum is dense.
Moreover, since $\tilde{\Lb}(\tb) \to \Lb$ as $\|\tb\|_{\infty} \to 0$,
for every real scalar $\delta_1>0$, there exists an arbitrarily small $\tb_0$ ($\|\tb_0\|_{\infty} \leq \delta_2$, with $\delta_2$ a real scalar) such that $\tilde{\Lb}(\tb_0)$ has a simple spectrum and
\begin{align}
    \|\Lb-\tilde{\Lb}(\tb_0)\|_{\mathrm{op}} &\leq \delta_1.
    \label{eq:perturbation-L-spectral}
\end{align}
Let $\tilde{\Sb}_{d,n}$ be the covariance matrix associated with the perturbed vectors from \eqref{eq:perturbated_v} and \eqref{eq:perturbated_beta} with $\tb_0$.
Furthermore, let
\begin{align*}
    \rho(l) :=
    \begin{cases}
                T^{-1}\left(1/l\right) &\text{if } l > \sqrt{\gamma} \\
                (1+\sqrt{\gamma})^{2} & \text{otherwise }
    \end{cases}
    .
\end{align*}
Since $\tilde{\Lb}(\tb_0)$ has distinct eigenvalues, the conclusion from section~\ref{sec:schur-complement-equivalence} shows that
\begin{align*}
    \lambda_i(\tilde{\Sb}_{d,n}) \xrightarrow[\substack{n\to\infty,\\d\to\infty,\\\frac{d}{n}\to\gamma}]{\text{a.s.}} \tilde{\rho}_i := \rho\left(\lambda_i(\tilde{\Lb}(t_0))\right).
\end{align*}
Using the triangle inequality, we get
\begin{align}
\left| \lambda_i(\Sb_{d,n}) - \rho(\lambda_i(\Lb)) \right|
&\leq
\underbrace{\left| \lambda_i(\Sb_{d,n})-\lambda_i(\tilde{\Sb}_{d,n}) \right|}_{\text{(A)}}
+
\underbrace{\left|\lambda_i(\tilde{\Sb}_{d,n}) - \tilde{\rho_i}\right|}_{\text{(B)}}
+
\underbrace{\left|\tilde{\rho_i} - \rho(\lambda_i(\Lb)) \right|}_{\text{(C)}} .
\label{eq:pertubation-3-terms}
\end{align}
We show the following facts:
\begin{itemize}
    \item[(C)] is small by continuity of the function $\rho$;
    \item[(B)] tends to zero by a previously established result in the case 
    where the limiting matrix has distinct eigenvalues; and
    \item[(A)] is small because the perturbation of the spikes is small. Hence, 
    $\|\Sb_{d,n} - \tilde{\Sb}_{d,n}\|_{\mathrm{op}}$ can be made arbitrarily small and Weyl's inequality applies.
\end{itemize}
In the following, we let the real scalar $\varepsilon>0$ be arbitrary.
Starting with the (C) term, using Weyl's inequality together with \eqref{eq:perturbation-L-spectral}, we get that
\begin{align*}
    |\lambda_i(\tilde{\Lb}(t_0)) - \lambda_i(\Lb)| &\leq \delta_1.
\end{align*}
By selecting $\delta_1$ sufficiently small, we get by continuity of the function $\rho$ that
\begin{align}
    |\tilde{\rho_i} - \rho(\lambda_i(\Lb))|\leq \frac{\varepsilon}{3}
    \label{eq:perturbation-bound-C}.
\end{align}
For the (B) term, the results from section~\ref{sec:schur-complement-equivalence} imply that for a large enough $n$ and $d$, almost surely
\begin{align}
    |\lambda_i(\tilde{\Sb}_{d,n}) - \tilde{\rho_i}| \leq \frac{\varepsilon}{3}.
    \label{eq:perturbation-bound-B}
\end{align}
We finish by bounding the (A) term, using Weyl's inequality once again:
\begin{align*}
    |\lambda_i(\Sb_{d,n})-\lambda_i(\tilde{\Sb}_{d,n})| 
    &\leq
    \|\Sb_{d,n} - \tilde{\Sb}_{d,n}\|_{\mathrm{op}} .
\end{align*}
Combining this with the definitions of $\Sb_{d,n}$ and $\tilde{\Sb}_{d,n}$ in \eqref{eq:covariance} and the triangle inequality, we obtain:
\begin{align*}
    &\left|\lambda_i(\Sb_{d,n})-\lambda_i(\tilde{\Sb}_{d,n})\right|  \\
    &\leq \sum_{k=1}^K \frac{n_k}{n} \left\|\left(\I_d + \beta_k \vb_k\vb_k^\mathsf{T}\right)^{1/2}\Z_k\left(\I_d + \beta_k \vb_k\vb_k^\mathsf{T}\right)^{1/2} - 
    \left(\I_d + \tilde{\beta}_k \tilde{\vb}_k\tilde{\vb}_k^\mathsf{T}\right)^{1/2}\Z_k\left(\I_d + \tilde{\beta}_k \tilde{\vb}_k \tilde{\vb}_k^\mathsf{T}\right)^{1/2}\right\|_{\mathrm{op}} \\
            &\leq \sum_{k=1}^K \frac{n_k}{n} \left\|\left(\I_d+\beta_k \vb_k\vb_k^\mathsf{T}\right)^{1/2} - \left(\I_d+\tilde{\beta}_k \tilde{\vb}_k\tilde{\vb}_k^\mathsf{T}\right)^{1/2}\right\|_{\mathrm{op}} \left\|\Z_k\right\|_{\mathrm{op}}\left((1+\beta_k)^{1/2}+(1+\tilde{\beta}_k)^{1/2}\right).
\end{align*}
We proceed by showing that $\left\|\left(\I_d+\beta_k \vb_k\vb_k^\mathsf{T}\right)^{1/2} - \left(\I_d+\tilde{\beta}_k \tilde{\vb}_k\tilde{\vb}_k^\mathsf{T}\right)^{1/2}\right\|_{\mathrm{op}}$ can be made arbitrarily small by an appropriate choice of $\delta_1$:
\begin{align*}
    &\left\|\left(\I_d+\beta_k \vb_k\vb_k^\mathsf{T}\right)^{1/2} - \left(\I_d+\tilde{\beta}_k \tilde{\vb}_k\tilde{\vb}_k^\mathsf{T}\right)^{1/2}\right\|_{\mathrm{op}} \\
    &= \left\|\left(\sqrt{\beta_k+1}-1\right) \vb_k \vb_k^\mathsf{T} - \left(\sqrt{\tilde{\beta}_k+1}-1\right) \tilde{\vb}_k\tilde{\vb}_k^\mathsf{T}\right\|_{\mathrm{op}}\\
    &\leq \left\|\left(\sqrt{\beta_k+1}-1\right) \vb_k \vb_k^\mathsf{T} - \left(\sqrt{\tilde{\beta}_k+1}-1\right) \vb_k\vb_k^\mathsf{T}\right\|_{\mathrm{op}} + \left|\sqrt{\tilde{\beta}_k+1}-1\right|\left\|\vb_k\vb_k^\mathsf{T} - \tilde{\vb}_k \tilde{\vb}_k^\mathsf{T}\right\|_{\mathrm{op}} \\
    &\leq \left|\sqrt{{\beta}_k+1} - \sqrt{\tilde{\beta}_k+1}\right| + \left|\sqrt{\tilde{\beta}_k+1}-1\right| \sqrt{2}\sqrt{1-(\vb_k^\mathsf{T}\tilde{\vb}_k)^2}.
\end{align*}
Plugging in the definition of $\tilde{\beta}_k$ \eqref{eq:perturbated_beta} and the asymptotic correlation of $\vb_k^\mathsf{T}\tilde{\vb}_k$ \eqref{eq:perturbation-corellation}, for an appropriate choice of $\delta_2$, we get that for a large enough $n$ and $d$ almost surely
\begin{align}
    \left|\lambda_i(\Sb_{d,n})-\lambda_i(\tilde{\Sb}_{d,n})\right| &\leq \frac{\varepsilon}{3}.
    \label{eq:perturbation-bound-A}
\end{align}
Finally, using \eqref{eq:perturbation-bound-C},  \eqref{eq:perturbation-bound-B}, and \eqref{eq:perturbation-bound-A} in \eqref{eq:pertubation-3-terms}, we get that for a large enough $n$ and $d$, almost surely
\begin{align*}
    \left|\lambda_i(\Sb_{d,n}) - \rho(\lambda_i(\Lb))\right| &\leq \varepsilon.
\end{align*}

\section{Conclusion}
This work examines the recovery of signals from noisy measurements in high-dimensional regimes.
In doing so, we extend beyond previous work on finitely many, low-rank perturbations of large random matrices.
Our results generalize the phase transition behavior known for such perturbations, moving from a single-spike framework to a multi-spike mixture model setting.
The latter facilitates the study of signal recovery in scenarios where multiple signals underlie a measurement set, which opens up a broad field of new applications.
To our knowledge, this is the first study to characterize a multi-spike mixture scenario and to demonstrate the interaction of correlation, signal energy, and mixture probability in the phase transition.
Consequently, our study builds on and extends prior work on the single-spike model's phase transition.

Specifically, our work examines the feasibility of signal recovery by extreme eigenvalues for measurements that can be modeled by a \acf{SMM}.
The \ac{SMM} is not the first model to facilitate multiple spikes, with previous work, \textit{e.g.}, on linear multi-spike models \cite{arous2024high}.
However, linear formulations do not always suit real-world measurements, particularly in technologies where the mixing of signals is not linear or deviates substantially from linear in certain scenarios.
In such situations, the \ac{SMM} can be essential: it accommodates multiple underlying signals yet does not require them to mix linearly, but instead selects the dominant signal per observation.
Examples of such application areas can be found in our previous work on the \ac{SMM} \cite{SMM_IEEE}, where this model was successfully used in data types ranging from \ac{IMS} in biomedicine to \ac{HSI} in computer vision.

Our findings show that in this multi-spike mixture model setting, the phase transition, and thus signal recovery by extreme eigenvalues, depends on several interacting factors: the correlation between spikes (\textit{i.e.}, how similar in content two underlying signals are), the energy parameter $\beta_k$ (\textit{i.e.}, the absolute strength of each underlying signal), and the mixture probabilities (\textit{i.e.}, how likely it is to encounter each underlying signal). 
Each of these parameters can individually impact the phase transition, and thus our ability to recover signals.
However, the multi-spike setting becomes particularly interesting when we consider the hereto less-studied role of inter-spike-correlation and the interplay between the different parameters.
Examples of this include strongly correlating spikes boosting mutual detectability despite making individual spike recognition harder, and the correlation between spikes substantially modulating the traditional role that signal strength plays in detectability.
More broadly, our results highlight how the parameters of the mixture model interact and jointly shape the phase transition behaviour. Besides the methodological insights and the generalization of single-spike scenarios, we believe that understanding this interplay can become an essential tool for driving experimental design, \textit{e.g.}, when developing physical \ac{IMS} experiments in analytical chemistry and the life sciences.

\section*{Acknowledgments}
Research reported in this publication was supported by the National Institutes of Health (NIH)’s Common Fund, National Institute Of Diabetes And Digestive And Kidney Diseases (NIDDK), and the Office Of The Director (OD) under Award Numbers U54DK120058, U54DK134302, and U01DK133766 (R.V.), by NIH’s Common Fund, National Eye Institute, and the Office Of The Director (OD) under Award Number U54EY032442 (R.V.), by NIH’s National Institute Of Allergy And Infectious Diseases (NIAID) under Award Numbers R01AI138581 and R01AI145992 (R.V.), by NIH’s National Institute On Aging (NIA) under Award Number R01AG078803 (R.V.), and by NIH's National Cancer Institute (NCI) under Award Number U01CA294527 (R.V.). The content is solely the responsibility of the authors and does not necessarily represent the official views of the National Institutes of Health.\\
The authors would like to thank Antoine Maillard (INRIA Paris \& Département d'Informatique, École Normale Supérieure, Université PSL) for fruitful discussions and valuable suggestions.

{\appendices
\newpage
\section{Concentration bounds}
\label{appendixA}

\begin{theorem}[Concentration of Lipschitz functions on the sphere \cite{vershynin2018high} (section 5.1.2)]
\label{thm:lipschitz_sphere}
    Consider a random vector $\y \sim \textnormal{Unif}(\mathbb{S}^{d-1})$ and a Lipschitz function $f$: $\mathbb{S}^{d-1} \to \mathbb{R}$, with Lipschitz constant $\|f\|_{\Lip}$. For every $t\geq 0$ and $d$ large enough, 
    \begin{align*}
        \mathbb{P}\{|f(\y) - \E f(\y)|\geq t\} \leq 2 \exp\left(- \frac{cdt^2}{\|f\|^2_{\Lip}}\right),
    \end{align*}
    for an absolute positive constant $c$.
\end{theorem}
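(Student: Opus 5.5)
The plan is to reduce the sphere statement to Gaussian concentration, which is the primitive tool. Write $\y = \g/\|\g\|_2$ with $\g \sim \mathcal{N}(\vnull,\I_d)$; this is the standard rotation-invariance representation of the uniform measure on $\mathbb{S}^{d-1}$. Then compare $f(\y)$ with a function of $\g$ that is Lipschitz on all of $\mathbb{R}^d$, apply the Gaussian Lipschitz concentration inequality
\begin{align*}
    \mathbb{P}\{|F(\g)-\E F(\g)|\geq s\} \leq 2\exp\left(-\frac{s^2}{2\|F\|_{\Lip}^2}\right),
\end{align*}
and transfer the bound back to the sphere. The factor $d$ in the exponent comes from rescaling, since $\g/\sqrt{d}$ concentrates near the unit sphere.

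The steps are as follows.
\begin{enumerate}
    \item Extend $f$ to a Lipschitz function on $\mathbb{R}^d$. A McShane extension, or the radial extension $\tilde f(\x) = f(\x/\|\x\|_2)$ on the annulus $\{1/2 \leq \|\x\|_2 \leq 2\}$, has Lipschitz constant of order $\|f\|_{\Lip}$.
    \item Set $F(\g) = \tilde f(\g/\sqrt{d})$. Then $\|F\|_{\Lip} \lesssim \|f\|_{\Lip}/\sqrt{d}$, so Gaussian concentration gives deviations of $F(\g)$ of order $t$ with probability at most $2\exp(-c d t^2/\|f\|_{\Lip}^2)$.
    \item Use norm concentration: $\big|\|\g\|_2 - \sqrt{d}\big| \leq s\sqrt{d}$ fails with probability at most $2\exp(-c d s^2)$. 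On this event, $|F(\g) - f(\y)| \leq \|f\|_{\Lip}\,\|\g/\sqrt{d} - \g/\|\g\|_2\|_2 \leq \|f\|_{\Lip}\, s$.
    \item Take $s \asymp t/\|f\|_{\Lip}$. Combining the two events, and noting that $|\E F(\g) - \E f(\y)| \lesssim \|f\|_{\Lip}/\sqrt{d}$ (which is where ``$d$ large enough'' enters), gives $\mathbb{P}\{|f(\y)-\E f(\y)|\geq t\} \leq C\exp(-c'dt^2/\|f\|^2_{\Lip})$.
    \item Absorb the constant $C$ into $c'$. When $t$ is small enough that the bound exceeds $1$ the statement is trivial, so the absorption is harmless and the prefactor becomes $2$.
\end{enumerate}

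The main obstacle is the bookkeeping in the small-$t$ regime. The mean shift $|\E F - \E f|$ must be smaller than a constant fraction of $t$, which only holds when $t \gtrsim \|f\|_{\Lip}/\sqrt{d}$. For smaller $t$, the right-hand side $2\exp(-cdt^2/\|f\|_{\Lip}^2)$ is already at least a constant, and we would choose $c$ small enough that it exceeds $1$ there. If this becomes awkward, the fallback is to cite the spherical isoperimetric inequality, which gives concentration around the median, and then pass from median to mean at the cost of constants. This is the route taken in \cite{vershynin2018high}.
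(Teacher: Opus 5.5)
Your argument is correct, but it follows a different route from the one the paper relies on. The paper gives no proof of its own and defers to Section~5.1.2 of \cite{vershynin2018high}. There the result comes from the spherical isoperimetric inequality: a blow-up estimate for sets of measure at least $1/2$ gives concentration about the median, which is then converted to concentration about the mean.

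You instead transfer Gaussian Lipschitz concentration through $\y = \g/\|\g\|_2$. The McShane extension $\tilde f$ is $\|f\|_{\Lip}$-Lipschitz on all of $\mathbb{R}^d$ and agrees with $f$ on the sphere, so the comparison $|F(\g)-f(\y)| \leq \|f\|_{\Lip}\,\bigl|\|\g\|_2/\sqrt{d}-1\bigr|$ holds pointwise. Since $\E(\|\g\|_2-\sqrt{d})^2 \leq 2$, the mean shift is at most $\sqrt{2}\,\|f\|_{\Lip}/\sqrt{d}$. Your case split on $t$ and the absorption of the prefactor then go through as written.

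Your route buys economy. It uses only Gaussian concentration (the same Herbst-type bound the paper already invokes as Lemma~\ref{lem:herbst}) and norm concentration, it avoids the isoperimetric theorem, and it lands directly on the mean. The isoperimetric route buys a stronger, set-level statement and sharper constants.

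Two small corrections. First, the case split already shows the bound holds for every $d \geq 1$: the mean shift is absorbed in the small-$t$ regime, where the right-hand side exceeds $1$, not by taking $d$ large. So the fallback you mention is unnecessary, and the mean shift is not where ``$d$ large enough'' enters. Second, the radial extension in step~1 is defined only on the annulus, so it would still need a further Lipschitz extension before Gaussian concentration applies. The McShane extension from the sphere is the cleaner choice.
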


\begin{theorem}[Convergence orthogonal invariant bounded matrix]
\label{thm:convergence_invariant_bounded_mat}
Consider an orthogonal invariant random matrix ensemble given by the probability measure $\mathbb{P}_{d}$ on sets $\Omega_d$ of $d \times d$ complex matrices.
Let $\A \in \mathbb{C}^{d \times d}$ be a matrix sampled from such a model.
Assume:
\begin{enumerate}
    \item[\textnormal{(i)}] \emph{(operator norm bound in probability)} 
    There exists a deterministic function $C(d)$ with 
    \begin{align*}
        C(d) \xrightarrow[d\to\infty]{} C < \infty,
    \end{align*}
    such that
    \begin{align*}
        \sum_{d\geq 1} \mathbb{P}\left\{\|\A\|_{\mathrm{op}} \geq C(d)\right\} < \infty.
    \end{align*}
    \item[\textnormal{(ii)}] \emph{(convergence of the normalized trace)}  
    The first moment converges almost surely:
    \begin{align*}
        \frac{\Tr(\A)}{d} \xrightarrow[d \to \infty ]{\mathrm{a.s.}} \mu.
    \end{align*}
\end{enumerate}
Then, for any pair of random unit vectors $\ub, \vb \in \mathbb{S}^{d-1}$ (independent of $\A$) such that 
    \begin{align*}
        \ub^\mathsf{T} \vb \xrightarrow[d \to \infty]{\mathrm{a.s.}} \ell ,
    \end{align*}
    we have
    \begin{align*}
        \ub^\mathsf{T} \A \vb \xrightarrow[d \to \infty]{\mathrm{a.s.}} \ell \:\mu.
    \end{align*}
\end{theorem}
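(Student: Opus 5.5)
The plan is to exploit orthogonal invariance to replace $\A$ by a conjugate by an independent Haar orthogonal matrix $\mathbf{O}$. Then $\ub^\mathsf{T}\A\vb$ has the same law as $\ub^\mathsf{T}\mathbf{O}^\mathsf{T}\A\mathbf{O}\vb$. Since $\ub,\vb$ are independent of $\A$, we may condition on $\A$ and on $(\ub,\vb)$ and view the quantity as a function of $\mathbf{O}$ alone. Equivalently, write $\vb = (\ub^\mathsf{T}\vb)\ub + \sqrt{1-(\ub^\mathsf{T}\vb)^2}\,\ub_\perp$ with $\ub_\perp\perp\ub$ a unit vector. Then
\begin{align*}
\ub^\mathsf{T}\A\vb = (\ub^\mathsf{T}\vb)\,\ub^\mathsf{T}\A\ub + \sqrt{1-(\ub^\mathsf{T}\vb)^2}\,\ub^\mathsf{T}\A\ub_\perp ,
\end{align*}
and after the Haar rotation, $\mathbf{x}:=\mathbf{O}\ub$ is uniform on $\mathbb{S}^{d-1}$. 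It therefore suffices to show two things almost surely: $\mathbf{x}^\mathsf{T}\A\mathbf{x}\to\mu$, and $\mathbf{x}^\mathsf{T}\A\mathbf{y}\to 0$ for $\mathbf{y}=\mathbf{O}\ub_\perp$, which is uniform on the sphere orthogonal to $\mathbf{x}$.

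For the diagonal term, condition on $\A$ and work on the event $\{\|\A\|_{\mathrm{op}}< C(d)\}$. The function $f(\mathbf{x})=\mathbf{x}^\mathsf{T}\A\mathbf{x}$ is $2C(d)$-Lipschitz on $\mathbb{S}^{d-1}$, and $\E[f\mid\A]=\Tr(\A)/d$. Theorem~\ref{thm:lipschitz_sphere} then gives
\begin{align*}
\mathbb{P}\{|f(\mathbf{x})-\Tr(\A)/d|\geq t\mid \A\}\leq 2\exp(-cdt^2/(4C(d)^2)).
\end{align*}
This bound is summable in $d$ because $C(d)\to C<\infty$. Adding the summable probabilities of the bad events from (i) and applying Borel--Cantelli gives $f(\mathbf{x})-\Tr(\A)/d\to0$ a.s. Assumption (ii) then yields $\mathbf{x}^\mathsf{T}\A\mathbf{x}\to\mu$. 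For complex $\A$, the same argument applies to the real and imaginary parts separately.

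For the cross term, I will avoid conditional spheres and use polarization. With $\mathbf{x},\mathbf{y}$ orthonormal, the vectors $(\mathbf{x}\pm\mathbf{y})/\sqrt2$ are each uniform on $\mathbb{S}^{d-1}$, being images of fixed unit vectors under the Haar matrix $\mathbf{O}$. The diagonal argument therefore gives
\begin{align*}
\tfrac12(\mathbf{x}+\mathbf{y})^\mathsf{T}\A(\mathbf{x}+\mathbf{y})\to\mu, \qquad \tfrac12(\mathbf{x}-\mathbf{y})^\mathsf{T}\A(\mathbf{x}-\mathbf{y})\to\mu, \qquad \mathbf{x}^\mathsf{T}\A\mathbf{x}\to\mu, \qquad \mathbf{y}^\mathsf{T}\A\mathbf{y}\to\mu .
\end{align*}
Subtracting the second from the first gives $\mathbf{x}^\mathsf{T}\A\mathbf{y}+\mathbf{y}^\mathsf{T}\A\mathbf{x}\to0$. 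Repeating this with $(\mathbf{x}\pm i\mathbf{y})/\sqrt2$, or in the non-symmetric real case by polarizing with $\A$ and $\A^\mathsf{T}$ (which shares the operator norm and trace), isolates $\mathbf{x}^\mathsf{T}\A\mathbf{y}\to0$. I will bound the coefficients using $|\ub^\mathsf{T}\vb|\le1$, so the decomposition gives $\ub^\mathsf{T}\A\vb-(\ub^\mathsf{T}\vb)\mu\to0$. Combined with $\ub^\mathsf{T}\vb\to\ell$, this gives the claim $\ub^\mathsf{T}\A\vb\to\ell\mu$.

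The main obstacle is the almost-sure bookkeeping. The identity in law under Haar conjugation holds for each fixed $d$, but an a.s. statement requires a single probability space. I will handle this by noting that Borel--Cantelli only uses marginal probabilities at each $d$, and those are invariant under the conjugation. So I bound $\mathbb{P}\{|\ub^\mathsf{T}\A\vb-(\ub^\mathsf{T}\vb)\Tr(\A)/d|\ge t\}$ directly via the rotated representation, show it is summable, and conclude without ever coupling across $d$.
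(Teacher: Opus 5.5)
Most of your plan coincides with the paper's proof: the Haar rotation, the split of $\vb$ along $\ub$ and $\ub_\perp$, the diagonal term via Theorem~\ref{thm:lipschitz_sphere} conditioned on $\A$, Borel--Cantelli, and assumption (ii). Your remark that Borel--Cantelli only needs per-$d$ marginal probabilities is the right way to handle the equality in law.

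\textbf{The gap is in the cross term.} Polarizing with $(\mathbf{x}\pm\mathbf{y})/\sqrt2$ only yields $\mathbf{x}^\mathsf{T}\A\mathbf{y}+\mathbf{y}^\mathsf{T}\A\mathbf{x}\to0$. Neither of your devices isolates $\mathbf{x}^\mathsf{T}\A\mathbf{y}$ when $\A\neq\A^\mathsf{T}$. Since $\mathbf{w}^\mathsf{T}\A\mathbf{w}=\mathbf{w}^\mathsf{T}\A^\mathsf{T}\mathbf{w}$ for every real or complex $\mathbf{w}$, any transpose-quadratic form sees only $(\A+\A^\mathsf{T})/2$. Polarizing with $\A^\mathsf{T}$, or with $(\mathbf{x}\pm i\mathbf{y})/\sqrt2$ in the bilinear form, returns the same symmetric combination. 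The antisymmetric part $\tfrac12\mathbf{x}^\mathsf{T}(\A-\A^\mathsf{T})\mathbf{y}$ stays invisible.

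The sesquilinear form does detect it. For $\mathbf{w}=(\mathbf{x}+i\mathbf{y})/\sqrt2$ one has $\mathbf{w}^{*}\A\mathbf{w}=\tfrac12(\mathbf{x}^\mathsf{T}\A\mathbf{x}+\mathbf{y}^\mathsf{T}\A\mathbf{y})+\tfrac{i}{2}(\mathbf{x}^\mathsf{T}\A\mathbf{y}-\mathbf{y}^\mathsf{T}\A\mathbf{x})$. But this $\mathbf{w}$ is not uniform on $\mathbb{S}^{d-1}$: it is complex with $\mathbf{w}^\mathsf{T}\mathbf{w}=0$. So ``the diagonal argument'' does not cover it, and concentrating $\mathbf{w}^{*}\A\mathbf{w}$ is exactly the problem you were trying to reduce.

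This is not a corner case. The statement allows arbitrary complex $\A$, and the paper applies it to non-symmetric matrices such as $(z\I-\Z)^{-1}\Z_m$ and $\Z_l(z\I-\Z)^{-1}\Z_m$. These are paired with $\vb_l,\vb_m$ whose overlap $\theta_{l,m}$ is typically below $1$, so the cross term genuinely contributes.

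\textbf{The fix.} The missing ingredient is the conditional sphere you set out to avoid; this is essentially the paper's Step~2. There it is phrased as Lipschitz concentration of $\y_1^\mathsf{T}\A\y_2$ over two orthonormal Haar rows, with mean zero by symmetry. Conditionally on $\A$ and $\mathbf{x}$, the vector $\mathbf{y}$ is uniform on the unit sphere of $\mathbf{x}^\perp\cong\mathbb{R}^{d-1}$. The map $\mathbf{y}\mapsto\mathbf{x}^\mathsf{T}\A\mathbf{y}$ is linear with Lipschitz constant $\|\A^\mathsf{T}\mathbf{x}\|_2\le\|\A\|_{\mathrm{op}}$. It has conditional mean zero by the symmetry $\mathbf{y}\mapsto-\mathbf{y}$. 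Apply Theorem~\ref{thm:lipschitz_sphere} in dimension $d-1$ to the real and imaginary parts on $\{\|\A\|_{\mathrm{op}}<C(d)\}$. This gives the bound $2\exp(-c(d-1)t^2/C(d)^2)$, which is summable, and your Borel--Cantelli bookkeeping then finishes.

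For symmetric (including complex-symmetric) $\A$, such as the resolvent $(z\I-\Z)^{-1}$, your polarization argument is already complete. It invokes Theorem~\ref{thm:lipschitz_sphere} only for genuinely uniform vectors. That is slightly cleaner than the paper's Step~2, which applies it to a pair of orthonormal rows that is not uniform on a single sphere.
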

\begin{proof}
    Projecting $\vb$ on $\ub$, we get the decomposition
    \begin{align*}
     \vb = \frac{(\ub \cdot \vb)}{\|\ub\|} \frac{\ub}{\|\ub\|} + \left(\vb - \frac{(\ub \cdot \vb )}{\|\ub\|} \frac{\ub}{\|\ub\|} \right)    ,
    \end{align*}
    and thus $\ub^\mathsf{T} \A \vb = \A_1 + \A_2$ with 
    \begin{align*}
        \A_1 &= (\ub \cdot \vb) \ub^\mathsf{T} \A \ub ,\\
        \A_2 &= \ub^\mathsf{T} \A (\vb - (\ub \cdot \vb) \ub).
    \end{align*}
    Since $\A$ is orthogonal invariant, $\ub^\mathsf{T} \A \ub$ has the same distribution as $\y^\mathsf{T} \A \y$ for any $\y \in \mathbb{S}^{d-1}$, which we denote as 
    \begin{align*}
        \A_1 \sim (\ub \cdot \vb) \y^\mathsf{T} \A \y.
    \end{align*}
    In particular, we take $\y \sim \text{Unif}(\mathbb{S}^{d-1})$. Similarly, $\A_2 \sim \sqrt{1 - (\ub \cdot \vb)^2}\y_1^\mathsf{T} \A \y_2$ for $\y_1 = (a_1 , \ldots , a_d),\: \y_2 = (b_1 , \ldots b_d)$, the first two rows of a Haar distributed random orthogonal matrix. 
    We then analyze the limits of $\A_1$ and $\A_2$ separately.\\
    
    \noindent$\bullet$ \textbf{Step 1}:
    $\ub^\mathsf{T} \A \ub$ behaves like $g_1(\y) := \y^\mathsf{T} \A \y$ for $\y \sim \text{Unif}(\mathbb{S}^{d-1})$. If we condition on $\A$, $g_1$ is Lipschitz on $\mathbb{S}^{d-1}$ since:
    \begin{align*}
        \nabla g_1(\y) &= (\A+\A^\mathsf{T})\y \\
        \|\nabla g_1(\y)\|_2 &\leq 2 \|\A\|_{\mathrm{op}}.
    \end{align*}
    Furthermore, the conditional expectation gives
    \begin{align*}
        \E \left[g(\y) | \A\right] = \frac{1}{d} \Tr(\A).
    \end{align*}
    Applying Theorem~\ref{thm:lipschitz_sphere}, we find the conditional probability bound: 
    \begin{align}
        \mathbb{P}\left\{ \left|\y^\mathsf{T} \A \y - \frac{1}{d}\Tr(\A) \right| \geq t \:| \A\right\} \leq 2 \exp \left( - \frac{cdt^2}{4 \|\A\|_{\mathrm{op}}^2}\right).
        \label{eq:norm-conditioned-on-A}
    \end{align}
    Let $D := \left\{\big|\y^\mathsf{T} \A \y - \frac{1}{d}\Tr(\A) \big| \geq t\right\}$ and $B := \left\{\|\A\|_{\mathrm{op}} \leq C(d)\right\}$, noting that $B$ is determined by $\A$, we get:
    \begin{align*}
        \mathbb{P}\left\{D|B\right\} = \frac{\E\left\{\ind_D\ind_B\right\}}{\mathbb{P}\left\{B\right\}} = \frac{\E_A\E_D\left\{\ind_D\ind_B|A\right\}}{\mathbb{P}\left\{B\right\}} = \frac{\E_A\ind_B\mathbb{P}\left\{D|A\right\}}{\mathbb{P}\left\{B\right\}}.
    \end{align*}
    Applying the bound \eqref{eq:norm-conditioned-on-A} pointwise gives:  
    \begin{align*}
        \mathbb{P}\left\{\left| \y^\mathsf{T} \A \y - \frac{1}{d}\Tr(\A) \right|\geq t \:|\: \|\A\|_{\mathrm{op}}\leq C(d) \right\} \leq 2 \exp \left(- \frac{cdt^2}{4 C(d)^2}\right).
    \end{align*}
    The assumptions on $\A$ give: 
    \begin{align}
        \mathbb{P}\left\{\left|\y^\mathsf{T}\A\y - \frac{1}{d}\Tr(\A)\right| \geq t\right\} \leq \exp \left(-\frac{c_2dt^2}{C(d)^2}\right) +\mathbb{P}\left\{\|\A\|_{\mathrm{op}} > C(d)\right\}. \label{eq:deviation_trace}
    \end{align}
    We continue by taking $t = t(d)$, a decreasing function, such that the exponential term in \eqref{eq:deviation_trace} is summable. For example with $t=d^{-1/4}$, for $d$ large enough we get 
    \begin{align}
        \mathbb{P}\left\{\left|\y^\mathsf{T}\A\y - \frac{1}{d}\Tr(\A)\right| \geq d^{-1/4}\right\} = \exp \left(-c_3d^{1/2}\right) +\mathbb{P}\left\{\|\A\|_{\mathrm{op}} > C(d)\right\} .\label{eq:summable_RHS}
    \end{align}
    By assumption (i), the right hand side of \eqref{eq:summable_RHS} is summable in $d$. Therefore, the Borel-Cantelli lemma implies: 
    \begin{align*}
        \left|\y^\mathsf{T} \A \y - \frac{1}{d}\Tr(\A)\right| \xrightarrow[d\to \infty]{\mathrm{a.s.}} 0.
    \end{align*}
    Using the assumption of the almost sure convergence of the first moment, we get that: 
    \begin{align*}
        \y^\mathsf{T} \A \y \xrightarrow[d\to \infty]{\mathrm{a.s.}} \mu.
    \end{align*}\\
    
    \noindent$\bullet$ \textbf{Step 2:} Analogous to step 1, we use the function $g_2(\y_1,\y_2) = \y_1^\mathsf{T} \A \y_2$, which is also Lipschitz on $\mathbb{S}^{d-1}\times \mathbb{S}^{d-1}$ since: 
    \begin{align*}
        &\nabla_{\x_1}g_2(\x_1,\x_2) = \A\x_2,\: \nabla_{\x_2}g_2(\x_1,\x_2) = \A^\mathsf{T}\x_1\\
        &\|\nabla g_2(\x_1,\x_2)\|_2^2 \leq \|\A^\mathsf{T} \x_1\|_2^2 + \|\A \x_2\|_2^2 \leq 2 \|\A\|_{\text{op}}^2.
    \end{align*}
    Conditioned on $\A$, by symmetry $\y_1^\mathsf{T} \A \y_2$ and $-\y_1^\mathsf{T} \A \y_2$ give 
    \begin{align*}
        \E\left[g_2(\y_1,\y_2) | \A\right] = 0,
    \end{align*}
    and thus by Theorem~\ref{thm:lipschitz_sphere}, we find
    \begin{align*}
        \mathbb{P}\left\{ |\y_1^\mathsf{T} \A \y_2|\geq t \:| \A\right\} \leq 2 \exp \left( - \frac{cdt^2}{\|\A\|_{\mathrm{op}}^2}\right).
    \end{align*}
    Similar to step 1, this allows us to conclude that
    \begin{align*}
        \y_1^\mathsf{T} \A \y_2 \xrightarrow[d\to \infty]{\text{a.s.}} 0.
    \end{align*}
\end{proof}

\section{Computation of limits}
\label{appendixB}
\begin{lemma}[Herbst \cite{anderson2010introduction}]
\label{lem:herbst}
    Let $G$ be a Lipschitz function on $\mathbb{R}^m$, with Lipschitz constant $\|G\|_{\text{Lip}}$.
    Then, under the Gaussian measure $\gamma_m$, for all real scalars $\delta>0$, we get the following concentration:
    \begin{align*}
        \gamma_m\left( |G - \E[G]| \geq \delta\right) \leq 2 \exp\left(-\frac{\delta^2}{2\|G\|^2_{\text{Lip}}}\right).
    \end{align*}
\end{lemma}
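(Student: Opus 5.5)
This is the Herbst concentration inequality for Gaussian measure. I will derive it from the Gaussian logarithmic Sobolev inequality using Herbst's exponential-moment argument. By a standard mollification argument (convolving $G$ with a smooth approximate identity preserves the Lipschitz constant and converges uniformly), I may assume $G$ is smooth with $\|\nabla G\|_2\le \|G\|_{\text{Lip}} =: L$ pointwise. Replacing $G$ by $G/L$, I further assume $L=1$. Integrability of $G$ under $\gamma_m$ is automatic, since $|G(x)|\le |G(0)|+\|x\|_2$.

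\textbf{Step 1 (LSI).} I will invoke the Gaussian LSI: for smooth $f$,
\begin{align*}
\mathrm{Ent}_{\gamma_m}(f^2)\le 2\int\|\nabla f\|_2^2\,d\gamma_m .
\end{align*}
This is standard (Gross), e.g.\ via the Ornstein--Uhlenbeck semigroup or tensorization from dimension one.

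\textbf{Step 2 (Herbst argument).} For $\lambda\in\mathbb{R}$, set $H(\lambda)=\E[e^{\lambda G}]$. This is finite because Lipschitz functions have Gaussian-type tails; truncation handles any technicalities. Apply the LSI to $f=e^{\lambda G/2}$, which has $\|\nabla f\|_2^2\le \frac{\lambda^2}{4}e^{\lambda G}$. This gives
\begin{align*}
\lambda H'(\lambda)-H(\lambda)\log H(\lambda)\le \frac{\lambda^2}{2}H(\lambda).
\end{align*}
Dividing by $\lambda^2H$ shows that $K(\lambda)=\frac{1}{\lambda}\log H(\lambda)$ satisfies $K'(\lambda)\le 1/2$. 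Since $K(0^+)=\E[G]$, integrating yields
\begin{align*}
\E[e^{\lambda(G-\E G)}]\le e^{\lambda^2/2},
\end{align*}
and the same bound holds for negative $\lambda$ by applying the argument to $-G$.

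\textbf{Step 3 (Chernoff).} Markov's inequality gives
\begin{align*}
\gamma_m(G-\E G\ge\delta)\le e^{-\lambda\delta+\lambda^2/2}.
\end{align*}
Optimizing at $\lambda=\delta$ gives $e^{-\delta^2/2}$. The same bound holds for the lower tail, and combining the two tails yields the factor $2$. Undoing the normalization turns $\delta^2/2$ into $\delta^2/(2L^2)$, which is the claimed bound.

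\textbf{Main obstacle.} The delicate point is Step 2's justification: finiteness and differentiability of $H$, and the boundary value $K(0^+)=\E G$. I would handle these by first proving the bound for bounded smooth Lipschitz $G$, then passing to the limit with monotone and dominated convergence. Everything else is routine.
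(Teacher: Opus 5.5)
Your argument is correct and is the standard Herbst proof: the Gaussian log-Sobolev inequality with constant $1$ applied to $e^{\lambda G/2}$, the differential inequality for $\frac{1}{\lambda}\log H(\lambda)$, then a Chernoff bound with $\lambda=\delta$. This is how the cited reference \cite{anderson2010introduction} proves it (the paper itself only cites the result), and one small remark: finiteness and differentiability of $H$ follow directly from $|G(x)|\le |G(0)|+L\|x\|_2$ and the integrability of $e^{c\|x\|_2}$ under $\gamma_m$, so you should not appeal to Gaussian tails of $G$, which would be circular.
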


\begin{lemma}
\label{lem:limit_type1}
Let $\X \in \mathbb{R}^{d\times n}$ have \ac{iid} Gaussian entries $\mathcal{N}(0,1)$. Partition the columns as $X = \begin{bmatrix}
    \X_1 & \X_2
\end{bmatrix}$, with $\X_1 \in \mathbb{R}^{d\times n_1}$, $\X_2 \in \mathbb{R}^{d\times (n-n_1)}$, and $n_1 \sim B(n,\pi_1)$ an independent random variable sampled from the Binomial distribution. Set
\begin{align*}
    \Sb_n &:= \frac{\X\X^\mathsf{T}}{n}, &&\R_n(z):= (z\I- \Sb_n)^{-1}, \text{and } &&\A_n(z) := \R_n(z)\frac{\X_1\X_1^\mathsf{T}}{n_1},
\end{align*}
where $z \in \mathbb{C}\setminus \{\lambda_1(\Sb_n), \ldots, \lambda_d(\Sb_n)\}$.
%, with $\text{Im}(z) > 0$.
Assume $d,n \to \infty$ with $d/n \to \gamma \in (0, \infty)$. Then, 
\begin{align*}
    \frac{1}{d}Tr(\A_n(z)) \xrightarrow[]{\text{a.s.}} T(z),
\end{align*}
where $T(z) = \int_t \frac{t}{z-t} d\mu_{\text{MP}(\gamma)}(t)$ is the $T$-transform of the \acl{MP} law of parameter $\gamma$. 
Moreover, this convergence is uniform on $\mathcal{K}(\eta) = \{z\in \mathbb{C},\: d(z,[a,b])> \eta\}$ for all real scalars $\eta>0$.
\end{lemma}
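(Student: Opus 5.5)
The plan is to reduce the partial trace over the columns of $\X_1$ to individual quadratic forms $\x_i^\mathsf{T}\R_n(z)\x_i$, where $\x_i$ is the $i$th column of $\X$. I will show that each of these forms concentrates around one common deterministic value, so that averaging over any large subset of columns gives the same limit as averaging over all columns. Writing
\begin{align*}
\frac{1}{d}\Tr(\A_n(z)) = \frac{n}{d}\cdot\frac{1}{n_1}\sum_{i\leq n_1}\frac{1}{n}\x_i^\mathsf{T}\R_n(z)\x_i ,
\end{align*}
and noting that $n_1$ is independent of $\X$ with $n_1/n\to\pi_1>0$ a.s. by the strong law, it suffices to show
\begin{align*}
\max_{i\leq n}\Bigl|\tfrac{1}{n}\x_i^\mathsf{T}\R_n(z)\x_i-\tfrac{\gamma G(z)}{1-\gamma G(z)}\Bigr|\xrightarrow{\text{a.s.}}0 .
\end{align*}
Given this, the right-hand side above tends to $\frac{1}{\gamma}\cdot\frac{\gamma G}{1-\gamma G}=\frac{G}{1-\gamma G}=T(z)$, by the relation $G=T(1-\gamma G)$ already used in the paper. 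As a consistency check, averaging over all $n$ columns gives $\frac1d\Tr(\R_n\Sb_n)=\frac1d\Tr(z\R_n-\I)=zG_n(z)-1\to zG(z)-1=T(z)$, which agrees.

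\textbf{Per-column step.} Let $\R_{(i)}=(z\I-\Sb_n+\x_i\x_i^\mathsf{T}/n)^{-1}$ be the leave-one-out resolvent. The Sherman--Morrison formula gives $\R_n\x_i=\R_{(i)}\x_i/(1-q_i)$ with $q_i=\frac1n\x_i^\mathsf{T}\R_{(i)}\x_i$, hence $\frac1n\x_i^\mathsf{T}\R_n\x_i=q_i/(1-q_i)$. Since $\R_{(i)}$ is independent of $\x_i$, I would apply Gaussian concentration (Lemma~\ref{lem:herbst}, or Hanson--Wright) conditionally on $\R_{(i)}$. On the event $\|\Sb_n\|_{\mathrm{op}}\leq b+1$, which holds eventually a.s. (section~\ref{sec:extreme-eig-val-after-K}), and for $z\in\mathcal{K}(\eta)$, we have $\|\R_{(i)}\|\leq C/\eta$. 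This yields $|q_i-\frac1n\Tr\R_{(i)}|\leq n^{-1/4}$ with probability at least $1-e^{-cn^{1/2}}$. A union bound over $i\leq n$ followed by Borel--Cantelli makes this uniform in $i$. The rank-one bound $|\Tr\R_{(i)}-\Tr\R_n|\leq C/\eta$ together with $\frac1d\Tr\R_n\to G(z)$ then gives $q_i\to\gamma G(z)$ uniformly in $i$.

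\textbf{Main obstacle.} The delicate point is controlling the denominator $1-q_i$ for complex $z$ near the bulk. For $z\in\mathcal{K}(\eta)$, $1-\gamma G(z)$ stays away from $0$. I would verify this by noting that $\frac{1}{1-\gamma G}=T/G$ is finite there, or directly from the explicit Mar\v{c}henko--Pastur formula. An alternative that avoids the issue is to use the a priori bound $|\frac1n\x_i^\mathsf{T}\R_n\x_i|\leq\|\x_i\|^2/(n\eta)\leq C$ and pass to the limit in the identity $q_i=(1-q_i)\cdot\frac1n\x_i^\mathsf{T}\R_n\x_i$.

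\textbf{Uniformity on $\mathcal{K}(\eta)$.} On the event $\|\Sb_n\|\leq b+1$, both $z\mapsto\frac1d\Tr\A_n(z)$ and $T$ are Lipschitz on $\mathcal{K}(\eta)$ with constant $O(\eta^{-2})$, and both tend to $0$ as $|z|\to\infty$, uniformly in $n$. Pointwise a.s. convergence on a countable dense grid of a compact piece of $\mathcal{K}(\eta)$, combined with this equicontinuity and the tail control, therefore upgrades to uniform convergence.
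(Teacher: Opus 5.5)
Your argument is correct in substance but takes a different route from the paper.

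\textbf{The paper's route.} It never looks at individual columns. It uses exchangeability of the columns (given $n_1$, which is independent of $\X$) to get $\E\bigl[\frac1d\Tr\A_n(z)\bigr]=\E\bigl[\frac1d\Tr(\R_n(z)\Sb_n)\bigr]$. It identifies the limit of the right-hand side as $T(z)$ from the Mar\v{c}henko--Pastur theorem, which is exactly your ``consistency check''. It then proves concentration of the whole partial trace in one step. It bounds $\|\nabla_{\X}\frac1d\Tr\A_n\|_2^2=O((dn)^{-1})$ on a high-probability set, takes a McShane extension to get a globally Lipschitz function, and applies Herbst's inequality and Borel--Cantelli. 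Arzel\`a--Ascoli then gives uniformity.

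\textbf{Your route.} You use the leave-one-out identity $\frac1n\x_i^\mathsf{T}\R_n\x_i=q_i/(1-q_i)$ and conditional Hanson--Wright. This yields the stronger statement that all diagonal quadratic forms are uniformly close to $\gamma G/(1-\gamma G)$.

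\textbf{Trade-offs.} The paper gets the limit for free by symmetry and needs neither the self-consistent equation nor $1-\gamma G\neq0$. It pays with the gradient computation and the Lipschitz-extension device, and it genuinely needs the column split to be independent of $\X$. Your route avoids that machinery and works for any column subset of size proportional to $n$, even one chosen depending on $\X$. Its only extra analytic input is the non-vanishing of $1-\gamma G$, and that does hold: if $1-\gamma G(z)=0$, the relation $G=(zG-1)(1-\gamma G)$ would force $G(z)=0$, contradicting $G(z)=1/\gamma$.

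\textbf{One point needs tightening.} The event $\|\Sb_n\|_{\mathrm{op}}\le b+1$ does not give $\|\R_{(i)}(z)\|_{\mathrm{op}}\le C/\eta$ on $\mathcal{K}(\eta)$. For real $z\in(b+\eta,b+1)$, or $z<a-\eta$, an eigenvalue may sit at $z$. What you need is that the spectra of $\Sb_n$ and of every $\Sb_{(i)}=\Sb_n-\x_i\x_i^\mathsf{T}/n$ lie within $\eta/2$ of $[a,b]$, eventually almost surely.

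The upper edge transfers automatically, since $\Sb_{(i)}\preceq\Sb_n$. Interlacing gives no lower bound on $\lambda_d(\Sb_{(i)})$, however. So for $\gamma<1$ you need an exponential tail bound on the smallest singular value of each leave-one-out matrix, plus a union bound over $i$. For $\gamma>1$, the $d-n$ zero eigenvalues make $z$ near $0$ problematic, since $0\in\mathcal{K}(\eta)$ whenever $a>\eta$. The statement and the paper's proof gloss over this as well.

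With localization stated this way, your handling of real $z$ outside $[a,b]$ is actually more careful than the paper's. That is where the outliers of Theorem~\ref{thm:SMM_phase_transition} live, and the paper's resolvent bound $\|\R_n(z)\|_{\mathrm{op}}\le 1/|\operatorname{Im} z|$ degenerates on the real axis.
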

\begin{proof}
% Since the columns of $\X$ are identically distributed, the average contribution of the $n_1$ columns in $\X_1$ is equal to the average contribution of all the $n$ columns. 
We define for the $j$th column $\mathbf{x}_j$ of $\X$: 
\begin{align*}
    a_j := \frac{1}{d}\mathbf{x}_j^\mathsf{T} \R_n(z) \mathbf{x}_j.
\end{align*}
Then, 
\begin{align*}
    \frac{1}{d} \Tr \left(\R_n(z) \frac{\X_1\X_1^\mathsf{T}}{n_1}\right) &= \frac{1}{n_1} \sum_{j=1}^{n_1} a_j, \quad\quad\text{and} \quad\quad\frac{1}{d} \Tr \left(\R_n(z) \frac{\X\X^\mathsf{T}}{n}\right) = \frac{1}{n} \sum_{j=1}^n a_j.
\end{align*}
Since the columns of $\X$ are \ac{iid}, $\E[a_j]$ is the same for every $j$. Hence, 
\begin{align}
    \E \left[\frac{1}{d} \Tr \left(\R_n(z) \frac{\X_1\X_1^\mathsf{T}}{n_1}\right)\right] = \E \left[\frac{1}{d} \Tr \left(\R_n(z) \frac{\X\X^\mathsf{T}}{n}\right)\right].
    \label{eq:expectation_equality}
\end{align}
The RHS of \eqref{eq:expectation_equality} corresponds to $\E \int_t \frac{t}{z-t} d\mu_{\Sb_{d,n}}(t)$, where $\mu_{\Sb_{d,n}}(t)$ is the \ac{ESD} of $\Sb_{d,n}$. By the \acl{MP} theorem, the \ac{ESD} of $\Sb_{d,n}$ converges weakly (and in expectation) to the \acl{MP} law $\mu_{\text{MP}(\gamma)}$. Therefore, 
\begin{align}
    \E \left[\frac{1}{d} \Tr \left(\R_n(z) \frac{\X_1\X_1^\mathsf{T}}{n_1}\right)\right] \xrightarrow[d,n\to\infty , \frac{d}{n}\to \gamma]{} \int_t \frac{t}{z-t} d \mu_{\text{MP}(\gamma)}(t) = T(z).
    \label{eq:expectation-convergence}
\end{align}
We then show concentration using Lemma~\ref{lem:herbst} for Lipschitz functions.
Let $Y_n := \frac{1}{d} \Tr(\A_n(z)) = \frac{1}{d n_1} \sum_{j=1}^{n_1} \x_j^\mathsf{T} \R_n(z) \x_j$. Then,
\begin{align*}
    \frac{\partial \R_n(z)}{\partial \X_{kl}} &= \frac{1}{n} \R_n(z) \left(\X \mathbf{e}_l\mathbf{e}_k^\mathsf{T} + \mathbf{e}_k\mathbf{e}_l^\mathsf{T} \X^\mathsf{T}\right) \R_n(z) = \frac{1}{n} \R_n(z) \left(\x_l \mathbf{e}_k^\mathsf{T} + \mathbf{e}_k \x_l^\mathsf{T} \right) \R_n(z)\\
    \frac{\partial Y_n}{\partial \X_{kl}} &= \frac{1}{dn_1} \sum_{j=1}^{n_1} 2 \mathbf{e}_k^\mathsf{T} \R_n(z) \x_j \ind_{\{j=l\}} + \x_j^\mathsf{T} \frac{\partial \R_n(z)}{\partial \X_{kl}} \x_j\\
    &= \frac{2}{dn_1} \mathbf{e}_k^\mathsf{T} \R_n(z) \x_l \ind_{\{l\leq n_1\}} + \frac{1}{dn_1n} \sum_{j=1}^{n_1} \x_j^\mathsf{T} \R_n(z)(\x_l \mathbf{e}_k^\mathsf{T} + \mathbf{e}_k \x_l^\mathsf{T})\R_n(z) \x_j\\
    &= \frac{2}{dn_1} \mathbf{e}_k^\mathsf{T} \R_n(z) \x_l \ind_{\{l\leq n_1\}} + \frac{2}{dn_1n} \mathbf{e}_k^\mathsf{T} \R_n(z) \left( \sum_{j=1}^{n_1} \x_j\x_j^\mathsf{T}\right) \R_n(z) \x_l.
\end{align*}
We use the operator norm bound for the resolvent:
\begin{align*}
    \|\R(z)\|_{\mathrm{op}} &\leq \frac{1}{|\mathrm{Im}(z)|} := c(z).
\end{align*}
This way, we can bound the absolute value of the partial derivatives of $Y$:
\begin{align*}
    \left|\frac{\partial Y_n}{\partial \X_{kl}}  \right|^2 \leq  \frac{c_1}{d^2 n_1^2} \left( (\mathbf{e}_k^\mathsf{T} \R_n(z)\x_l)^2 \ind_{\{l\leq n_1\}}+ \frac{1}{n^2} (\mathbf{e}_k^\mathsf{T} \R_n(z)\X_1\X_1^\mathsf{T}\R_n(z)\x_l)^2\right),
\end{align*}
for some absolute constant $c_1 \geq 0$.
Summing the last equation over $k,l$ (and switching indices), we get: 
\begin{align}
    \|\nabla Y_n\|_2^2 
    &\leq \frac{c_1}{d^2 n_1^2}\left(\sum_{i=1}^{n_1} \|\R_n(z)\x_i\|_2^2+ \frac{1}{n^2} \sum_{j=1}^n\|\R_n(z)\X_1\X_1^\mathsf{T}\R_n(z)\x_j\|_2^2\right) \nonumber \\
    &\leq c_1\left( \frac{1}{d^2n_1^2} c(z)^2 \sum_{i=1}^{n_1}\|\x_i\|_2^2+ \frac{1}{d^2n_1^2 n^2} c^4(z)\|\X_1\|_{\mathrm{op}}^4 \sum_{j=1}^n \|\x_j\|_2^2\right).
    \label{eq:Y_n_gradient_norm}
\end{align}
We now define the sets $\mathcal{A}, \mathcal{B}, \mathcal{C}$, and $\mathcal{D}$, where the gradient is bounded:
\begin{align*}
    \mathcal{A} &:= \left\{\|\x_i\|_2 \leq 2 \sqrt{d},\: 1 \leq i \leq n \right\} ,\\
    \mathcal{B} &:= \left\{\|\X_1\|_{\text{op}} \leq c_3 \left(\sqrt{d} + \sqrt{n_1}\right)\right\} ,\\
    \mathcal{C} &:= \left\{ \left|\frac{n_1}{n} - \pi_1\right| \leq \varepsilon \right\}, & \varepsilon \in (0,1) ,\\
    \mathcal{D} &:= \mathcal{A} \cap \mathcal{B} \cap \mathcal{C} ,
\end{align*}
which are such that
\begin{align}
    &\mathbb{P}\left\{\mathcal{A}^c\right\} \leq n \exp( - c_2 d),
    &\mathbb{P}\left\{\mathcal{B}^c\right\} \leq 2 \exp( - d),
    &&\mathbb{P}\left\{\mathcal{C}^c\right\} \leq 2 \exp \left(-c_6n\varepsilon^2\right).
    \label{eq:concentration_estimates}
\end{align}
The probability bounds for $\mathcal{A}$ can be found in chapter~3 of \cite{vershynin2018high}, for $\mathcal{B}$ in chapter~4 of \cite{vershynin2018high}, and for $\mathcal{C}$ using the Chernoff bound.
We thus get that
\begin{align*}
    \|\nabla Y_n\|_2^2 &\leq \mathcal{O}\left(\frac{1}{dn}\right) =: L_{d,n}^2,
\end{align*}
with a probability of at least $1 - \mathcal{O}\left(\exp\left(-c_4d\right)\right)$ for $n$ and $d$ large enough such that $\frac{d}{n} \to \gamma$ ($c_4$ is a constant dependent on $z$ and $\varepsilon$).

Let $f: \mathbb{R}^{d \times n} \to \mathbb{C}$ be such that $Y_n = f(\X)$. We define $\tilde{f}$ to be the McShane's extension of $f$:
\begin{align*}
    \tilde{f}(\X) = \inf_{\Z \in \mathcal{D}} f(\Z)+ L_{d,n} \|\X-\Z\|_\text{F}.
\end{align*}
This extension guarantees that $\tilde{f}$ is $L_{d,n}$-Lipschitz and that
\begin{align}
    \tilde{f}(\X) &= f(\X) &\forall \X \in \mathcal{D}.
    &\label{eq:macshane_extension}
\end{align}
We will now show that this extension gives $\E \left[|f(\X) - \tilde{f}(\X)|\right] = o(1)$.
Using \eqref{eq:macshane_extension} and Cauchy-Schwarz, we get:
\begin{align}
    \left|\E \left[f(\X) - \tilde{f}(\X)\right] \right|
    &= \left| \E \left[(f(\X) - \tilde{f}(\X)) \ind_{\X \in \mathcal{D}^c}\right] \right| \nonumber\\
    &\leq \sqrt{ \E \left[\left(f(\X) - \tilde{f}(\X)\right)^2\right] \mathbb{P}\left[\mathcal{D}^c\right]} \nonumber \\
    &\leq \sqrt{2} \sqrt{ \E f^2(\X)+ \E \tilde{f}^2(\X)}\sqrt{\mathbb{P}[\mathcal{D}^c]},
    \label{eq:mcshane_absolute_diff}
\end{align}
and
\begin{align*}
    f^2(\X) &=\left(\frac{1}{d}\Tr\left(\R_n(z)\frac{\X_1\X_1^T} {n_1}\right)\right)^2 \leq \left\|\R_n(z) \frac{\X_1\X_1^T}{n_1}\right\|^2_{\text{op}} \leq c(z) \frac{\|\X_1\|_{\text{op}}^4}{n_1^2} .
\end{align*}
Using the Dominated convergence theorem and the fact that $n_1/n \to \pi_1$ almost surely, we further obtain that
\begin{align}
    \lim_{\substack{n\to\infty,\\d\to\infty,\\\frac{d}{n}\to\gamma}} \E[f^2(\X)] &= \E\left[ \lim_{d,n}\E\left[ f^2(\X) \left|n_1\right. \right]\right] \leq c_2(z) \E \left[ \lim_{d,n} \left(\sqrt{\frac{d}{n_1}} +1 \right)^2\right] =: c_7,
    \label{eq:mcshane_bound1}
\end{align}
$\tilde{f}(\mathbf{0}) = 0$ since $\X=0$ gives $\Sb_n=\mathbf{0}$, and $\A_n=\mathbf{0}$.
Since $\tilde{f}$ is Lipschitz we get: 
\begin{align}
     | \tilde{f}(\X) | &\leq |f(\mathbf{0})| + L_{d,n} \|\X\|_{\text{F}} = L_{d,n} \|\X\|_{\text{F}} \nonumber\\
     \E \tilde{f}^2(\X) & \leq L_{d,n}^2 \E \|\X\|^2_{\text{F}} = \mathcal{O}\left(1\right) .
     \label{eq:mcshane_bound2}
\end{align}
Plugging \eqref{eq:concentration_estimates}, \eqref{eq:mcshane_bound1}, and \eqref{eq:mcshane_bound2} into \eqref{eq:mcshane_absolute_diff}, we get:
\begin{align}
    \E\left[|f(\X) - \tilde{f}(\X)|\right] = o(1).
    \label{eq:absolute_mcshane_small_o}
\end{align}
We are now ready to provide a concentration bound on $f(\X)$ around its expectation:
\begin{align*}
    \mathbb{P}\left[|f(\X)- \E f(\X)| \geq t\right] 
    &= \mathbb{P}\left[|f(\X)- \E f(\X)| \geq t , \X \in \mathcal{D}\right] + \mathbb{P}\left[|f(\X)- \E f(\X)| \geq t , \X \in \mathcal{D}^c\right] \\
    &\leq \mathbb{P}\left[|\tilde{f}(\X)- \E f(\X)| \geq t , \X \in \mathcal{D}\right] + \mathbb{P}[\mathcal{D}^c] \\
    &\leq \mathbb{P}\left[|\tilde{f}(\X)- \E f(\X)| \geq t \right] + \mathbb{P}[\mathcal{D}^c] \\
    &\leq \mathbb{P}\left[|\tilde{f}(\X)- \E \tilde{f}(\X)| \geq t - |\E [\tilde{f}(\X) - f(\X)]| \right] + \mathbb{P}[\mathcal{D}^c].
\end{align*}
For $n$ and $d$ large enough, we know from \eqref{eq:absolute_mcshane_small_o} that $\E \left[|f(\X) - \tilde{f}(\X)|\right] \leq \frac{t}{2}$.
An application of Lemma~\ref{lem:herbst} gives (for $n$ and $d$ large enough):
\begin{align}
    \mathbb{P} \left\{|Y_n - \E Y_n| \geq t \right\} &\leq 2 \exp\left(\frac{-t^2 d n }{c_5}\right) + \mathcal{O}\left(\exp\left(-c_4 d\right)\right),
    \label{eq:concentration_Y_n}
\end{align}
for some large enough constant $c_5$. By the Borel-Cantelli lemma, we get that $|Y_n - \E[Y_n]| \xrightarrow[d,n\to \infty, \frac{d}{n}\to \gamma]{\mathrm{a.s.}} 0$, and therefore combining with \eqref{eq:expectation-convergence} yields
\begin{align*}
    Y_n \xrightarrow[d,n \to \infty , \frac{d}{n} \to \gamma]{\mathrm{a.s.}} T(z).
\end{align*}
Note that for $n,d$ large enough, the quantities are uniformly bounded and equicontinuous on the set $\mathcal{K}(\eta) = \{z\in \mathbb{C},\: d(z,[a,b])> \eta\}$ for all $\eta > 0$. An application of Arzela-Ascoli's theorem then gives the uniform convergence $\mathcal{K}(\eta)$.
\end{proof}

\begin{lemma}
\label{lem:limit_type2}
Let $\X \in \mathbb{R}^{d\times n}$ have \ac{iid} Gaussian entries $\sim \mathcal{N}(0,1)$. Partition the columns as $\X = \begin{bmatrix}
    \X_1 & \X_2 & \ldots & \X_K
\end{bmatrix}$, with $\X_i \in \mathbb{R}^{d\times n_i}$, $(n_1,\ldots, n_K) \sim \text{Mult}(n,\pi_1, \ldots, \pi_K)$, an independent random vector sampled from the multinomial distribution. For $i,j \in [K]$, set:
\begin{align*}
    \Sb_n &:= \frac{\X\X^\mathsf{T}}{n}, &&\R_n(z):= (z\I- \Sb_n)^{-1}, \text{and }&&\Bb_n(z) := \frac{\X_i\X_i^\mathsf{T}}{n_i} \R_n(z)\frac{\X_j\X_j^\mathsf{T}}{n_j},
\end{align*}
where $z \in \mathbb{C}\setminus \{\lambda_1(\Sb_n), \ldots, \lambda_d(\Sb_n)\}$.
%Where $z \in \mathbb{C}$, with $\text{Im}(z) > 0$.
Assume $d,n \to \infty$ with $d/n \to \gamma \in (0, \infty)$. Then, 
\begin{align*}
    \frac{1}{d}\Tr(\Bb_n(z)) \xrightarrow[]{\text{a.s.}} T(z) \left(\frac{\gamma}{\pi_i} \ind_{i=j} + \frac{1}{1-\gamma G(z)}\right),
\end{align*}
where $T(z) = \int_t \frac{t}{z-t} d\mu_{\text{MP}(\gamma)}(t)$, $G(z) = \int_t \frac{1}{z-t} \mu_{\text{MP}(\gamma)}(t)$ are respectively the $T$- and the Cauchy transform of the \acl{MP} law of parameter $\gamma$.
Moreover, this convergence is uniform on $\mathcal{K}(\eta) = \{z\in \mathbb{C},\: d(z,[a,b])> \eta\}$ for all $\eta>0$.
\end{lemma}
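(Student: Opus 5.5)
I would follow the template of Lemma~\ref{lem:limit_type1}: first identify the limit of $\frac{1}{d}\Tr(\Bb_n(z))$ by a leave-one-out computation, then upgrade to almost sure convergence by Gaussian concentration (Lemma~\ref{lem:herbst} with a McShane extension), and finally obtain uniformity on $\mathcal{K}(\eta)$ by equicontinuity and Arzel\`a--Ascoli. Throughout I condition on $(n_1,\ldots,n_K)$ and use $n_i/n\to\pi_i$ almost surely (Chernoff). I treat $z$ real outside $[a,b]$ and extend by analyticity, or use $\|\R_n(z)\|_{\mathrm{op}}\le c(z)$ on the good event where $\lambda_1(\Sb_n)$ is close to $b$.

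\textbf{Expansion.} Writing $I_i$ for the column indices of block $i$, we have
\begin{align*}
\frac{1}{d}\Tr(\Bb_n(z)) = \frac{1}{d\,n_in_j}\sum_{k\in I_i}\sum_{l\in I_j}(\x_l^\mathsf{T}\x_k)\,\x_k^\mathsf{T}\R_n(z)\x_l .
\end{align*}
For each $l$, let $\R_{(l)}$ be the resolvent with column $l$ removed. Sherman--Morrison gives
\begin{align*}
\R_n\x_l=\frac{\R_{(l)}\x_l}{1-\frac1n\x_l^\mathsf{T}\R_{(l)}\x_l}.
\end{align*}
By the quadratic form concentration of Theorem~\ref{thm:convergence_invariant_bounded_mat}, applied to $\x_l/\|\x_l\|$ which is independent of $\R_{(l)}$, together with $\|\x_l\|^2/d\to1$ and $\frac1d\Tr\R_{(l)}\to G(z)$, the denominator converges to $1-\gamma G(z)$ uniformly in $l$.

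\textbf{Diagonal terms ($k=l$, present only if $i=j$).} Each such term is $\|\x_k\|^2\,\x_k^\mathsf{T}\R_n\x_k\approx d\cdot\frac{dG}{1-\gamma G}$. Summing over $n_i$ indices and dividing by $dn_i^2$ gives
\begin{align*}
\frac{d}{n_i}\,\frac{G}{1-\gamma G}\;\longrightarrow\;\frac{\gamma}{\pi_i}\,T(z),
\end{align*}
using $G=T(1-\gamma G)$.

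\textbf{Off-diagonal terms.} The remaining sum equals
\begin{align*}
\frac{1}{dn_in_j}\sum_{l\in I_j}\frac{\x_l^\mathsf{T}\X_i^{(l)}\X_i^{(l)\mathsf{T}}\R_{(l)}\x_l}{1-\frac1n\x_l^\mathsf{T}\R_{(l)}\x_l},
\end{align*}
where $\X_i^{(l)}$ is $\X_i$ with column $l$ deleted. The matrix $\X_i^{(l)}\X_i^{(l)\mathsf{T}}\R_{(l)}/n_i$ is independent of $\x_l$ and has bounded norm on the good event. Its quadratic form in $\x_l$ therefore concentrates around $\Tr(\cdot)$. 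Replacing $\R_{(l)}$ by $\R_n$ costs a rank-one error of order $O(1/n)$ in normalized trace. By Lemma~\ref{lem:limit_type1}, the normalized trace tends to $T(z)$. Averaging over the $n_j$ indices then gives $T(z)/(1-\gamma G(z))$. Adding the two contributions yields the claimed limit.

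\textbf{Main obstacle.} The hard part is making the approximations uniform over the $n_i n_j$ terms with errors summable for Borel--Cantelli. I would first handle this in expectation, where exchangeability reduces everything to a single column $l$ and the error is $O(d^{-1/2})$ by bounded second moments on the good event. I would then transfer to almost sure convergence exactly as in Lemma~\ref{lem:limit_type1}. On $\mathcal{D}$, extended by a bound on $\|\X_j\|_{\mathrm{op}}$ (the event $\mathcal{B}$ for each block), the gradient of $\X\mapsto\frac1d\Tr\Bb_n(z)$ is $O((dn)^{-1/2})$, since $\Bb_n$ only adds bounded factors $\X_i\X_i^\mathsf{T}/n_i$. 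The McShane extension plus Herbst's inequality then give a summable tail, and the bias $\E|f-\tilde f|=o(1)$ is controlled as before via second moments of $\|\X_i\|_{\mathrm{op}}^4\|\X_j\|_{\mathrm{op}}^4/(n_i^2n_j^2)$. Uniformity on $\mathcal{K}(\eta)$ follows from uniform boundedness and Lipschitz dependence on $z$ of the resolvent, by Arzel\`a--Ascoli.
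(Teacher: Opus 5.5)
Your proposal is correct. Its concentration and uniformity steps match the paper's: a gradient of order $(dn)^{-1/2}$ on a good event, the McShane extension, Herbst, Borel--Cantelli, and Arzel\`a--Ascoli. The identification of the limit, however, follows a genuinely different route.

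\textbf{The paper's route.} It treats $i=j$ by Stein's identity in the entries of $\X_1$. This yields three pieces:
\begin{itemize}
\item[(a)] $\frac{1}{n_1}\E\Tr(\R_n\X_1\X_1^\mathsf{T}/n_1)\to\frac{\gamma}{\pi_1}T$;
\item[(b)] $\frac1d\E\Tr(\R_n\X_1\X_1^\mathsf{T}/n_1)\to T$;
\item[(c)] a product of two traces, which must be decoupled by a Gaussian Poincar\'e variance bound to give $\gamma T^2$.
\end{itemize}
The limit therefore first appears as $T(\frac{\gamma}{\pi_1}+1+\gamma T)$ and is converted using $1+\gamma T=1/(1-\gamma G)$. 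The case $i\neq j$ is deduced only afterwards, by polarization: merge blocks $i$ and $j$ into one block of weight $\pi_i+\pi_j$ and subtract the two diagonal cases.

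\textbf{Your route.} Your column expansion with Sherman--Morrison handles $i=j$ and $i\neq j$ in a single computation. The $k=l$ terms, present only when $i=j$, produce $\frac{\gamma}{\pi_i}T$. The $k\neq l$ terms produce $T/(1-\gamma G)$, with the factor $1/(1-\gamma G)$ coming directly from the Sherman--Morrison denominator.

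\textbf{What each buys.} Your approach gives a transparent origin for each term: self-interaction of a column versus cross-column interaction through the resolvent. It needs no polarization and no decoupling of a squared trace. The limit identification uses only quadratic-form concentration for $\x_l$ independent of leave-one-out matrices, so it is not tied to Stein's identity; the almost-sure upgrade via Herbst still uses Gaussianity. The cost is controlling $n_in_j$ leave-one-out approximations, which your exchangeability argument in expectation handles. The paper's Stein computation, by contrast, delivers the expectation in one stroke.

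\textbf{One caution, shared with the paper.} For real $z>b+\eta$, the case needed for outliers, $\frac1d\Tr\Bb_n(z)$ is not integrable at finite $d$, because the top eigenvalue of $\Sb_n$ has positive density at $z$. Your bias bound ``as before'' relies on a global bound on $\|\R_n(z)\|_{\mathrm{op}}$, so it is only valid for $\operatorname{Im}z\neq 0$. There are two fixes:
\begin{itemize}
\item[(i)] carry out the expectation computation for the McShane-extended function, on a good event defined through leave-one-out quantities; or
\item[(ii)] prove the result off the real axis and extend it to real $z$ by analyticity (Vitali), as you indicate.
\end{itemize}
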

\begin{proof}
    We start with the case $i=j$. Without loss of generality, we take $i=1$.
    \begin{align}    
        \frac{1}{d}\E \Tr\left[\Bb_n(z)\right] 
        &= \frac{1}{d n_1^2} \E \Tr \left[\X_1\X_1^\mathsf{T} \left(zI - \frac{1}{n}\X\X^\mathsf{T}     \right)^{-1}\X_1\X_1^\mathsf{T}\right] \nonumber \\ 
        &= \frac{1}{dn_1^2} \E \sum_{i=1}^{d} \sum_{j,k=1}^{n_1}[\X_1]_{i,j} \left[\X_1^\mathsf{T} \R_n(z) \X_1\right]_{j,k} [\X_1]_{i,k} \nonumber \\
        &= \frac{1}{dn_1^2} \E \sum_{i=1}^{d} \sum_{j,k=1}^{n_1} \delta_{k=j} \left[\X_1^\mathsf{T} \R_n(z) \X_1\right]_{j,k} + [\X_1]_{i,k} \frac{\partial}{\partial  [\X_1]_{i,j}}\left[\X_1^\mathsf{T} \R_n(z)\X_1  \right]_{j,k}.
        \label{eq:differential_bi_side}
    \end{align}
    We used Stein's identity in the last step. To continue, we need the differential of $g(\X_1) = \X_1^\mathsf{T} \R(z) \X_1$, which is given by
    \begin{align}
        \mathcal{D}_g(\X_1)(\mathbf{H}) &=  \mathbf{H}^\mathsf{T} \R_n(z)\X_1 + \X_1^\mathsf{T} \R_n(z) \mathbf{H}+ \frac{1}{n}\X_1^\mathsf{T} \R_n(z) \left(\X_1 \mathbf{H}^\mathsf{T} + \mathbf{H}\X_1^\mathsf{T}\right) \R_n(z)\X_1.
    \end{align}
    Plugging this into \eqref{eq:differential_bi_side}, we get:
    \begin{align*}
        \frac{1}{d}\E \Tr\left[\Bb_n(z)\right] 
        &= \frac{1}{n_1}\E\Tr\left[\R_n(z)\frac{\X_1\X_1^\mathsf{T}}{n_1}\right] + \frac{1}{dn_1^2} \E\sum_{i=1}^{d} \sum_{j,k=1}^{n_1} [\X_1]_{i,k} [\R_n(z)\X_1]_{i,k} + [\X_1]_{i,k}[\X_1^\mathsf{T}\R_n(z)]_{j,i}\delta_{j=k} \\
        &+ 
    \frac{1}{n} \frac{1}{dn_1^2} \E\sum_{i=1}^{d} \sum_{j,k=1}^{n_1} \left( [\X_1]_{i,k}[\X_1^\mathsf{T}\R_n(z)\X_1]_{j,j} \: [\R_n(z)\X_1]_{i,k}+[\X_1]_{i,k}[\X_1^\mathsf{T}\R_n(z)]_{j,i}\: [\X_1^\mathsf{T}\R_n(z)\X_1]_{j,k}\right) \\
    &= \frac{1}{n_1}\E\Tr\left[\R_n(z)\frac{\X_1\X_1^\mathsf{T}}{n_1}\right] + \frac{1}{d} \E\Tr\left[\R_n(z)\frac{\X_1\X_1^\mathsf{T}}{n_1}\right] + \frac{1}{d n_1} \E\Tr\left[\R(z)\frac{\X_1\X_1^\mathsf{T}}{n_1}\right] \\
    &\quad+ \frac{1}{d n} \E\Tr\left[\R_n(z)\frac{\X_1\X_1^\mathsf{T}}{n_1}\right] \Tr\left[\R_n(z)\frac{\X_1\X_1^\mathsf{T}}{n_1}\right] + \frac{1}{d n} \E\Tr\left[\frac{\X_1\X_1^\mathsf{T}}{n_1} \R_n(z) \frac{\X_1\X_1^\mathsf{T}}{n_1} \R_n(z)\right].
    \end{align*}
    The third and last term vanishes in the limit $d \to \infty, n\to \infty$ since by Lemma~\ref{lem:limit_type1} the expectation of $\frac{1}{d} \E \Tr \left[\R_n(z) \frac{\X_1\X_1^\mathsf{T}}{n_1}\right]$ is finite in the limit.
    Using Hoffman-Wielandt inequality in the last term, we find:
    \begin{align}
        \Tr \left[ \frac{\X_1\X_1^\mathsf{T}}{n_1} \R_n(z) \frac{\X_1\X_1^\mathsf{T}}{n_1} \R_n(z)\right] &\leq \sum_{i=1}^d \lambda_i\left(\frac{\X_1\X_1^\mathsf{T}}{n_1}\right) \lambda_i\left(\R_n(z)\frac{\X_1 \X_1^\mathsf{T}}{n_1} \R_n(z)\right).
        &\label{eq:hoffman-wielandt}
    \end{align}
    Furthermore, we use the fact that for two positive semi-definite matrices $\mathbf{A}, \mathbf{B}$ we have: 
    \begin{align*}
        \lambda_i(\mathbf{A}\mathbf{B}) \leq \lambda_1(\mathbf{A}) \lambda_i(\mathbf{B}) \qquad\forall i.
    \end{align*}
    Using this in \eqref{eq:hoffman-wielandt}, we obtain:
    \begin{align}
        \Tr \left[ \frac{\X_1\X_1^\mathsf{T}}{n_1} \R_n(z) \frac{\X_1\X_1^\mathsf{T}}{n_1} \R_n(z)\right] &\leq \lambda_1^2\left(\R(z)\right) \sum_{i=1}^d \lambda_i^2\left(\frac{\X_1 \X_1^\mathsf{T}}{n_1}\right). \label{eq:hoffman-wielandt-2nd}
    \end{align}
    The convergence in expectations on the moments of the \acl{MP} distribution together with the bound $\lambda_1(\R(z)) \leq \frac{1}{|\text{Im}(z)|}$, we finally get from \eqref{eq:hoffman-wielandt-2nd}:
    \begin{align*}
        \frac{1}{d n} \E\Tr\left[\frac{\X_1\X_1^\mathsf{T}}{n_1} \R_n(z) \frac{\X_1\X_1^\mathsf{T}}{n_1} \R_n(z)\right] \xrightarrow[d,n\to \infty , \frac{d}{n}\to \gamma]{} 0.
    \end{align*}
    Lastly, we show that the variance of the normalized trace of 
    $Y_n := \frac{1}{d} \Tr\left( \R_n(z)\frac{\X_1\X_1^\mathsf{T}}{n_1}\right)$ 
    goes to 0. By the Gaussian Poincaré inequality:
    \begin{align}
        \Var\left(Y_n\right) &\leq \E\|\nabla Y_n\|^2. \label{eq:Gaussian-poincaré}
    \end{align}
    Using the bound \eqref{eq:Y_n_gradient_norm}, and the facts that $\E \|\x_i\|^2 = d$ and $\E \|\X_1\|_{\text{op}}^4 = \mathcal{O}(n^2)$ for $n$ and $d$ large enough, we obtain:
    \begin{align*}
        \E \| \nabla Y_n\|^2  = \mathcal{O}\left(\frac{1}{dn}\right).
    \end{align*}
    Together with \eqref{eq:Gaussian-poincaré}, this implies:
    \begin{align*}
        \Var\left(Y_n\right) \xrightarrow[d,n\to \infty, \frac{d}{n}\gamma]{} 0.
    \end{align*}
    % Lastly, we can use the concentration bounds \eqref{eq:concentration_Y_n} from the proof of lemma~\ref{lem:limit_type1} to prove that the variance of the nomalized trace of 
    % $Y_n := \frac{1}{d} \Tr\left( \R_n(z)\frac{\X_1\X_1^\mathsf{T}}{n_1}\right)$ 
    % goes to 0:
    % \begin{align*}
    %     \Var\left(Y_n\right) = \E \left[|Y_n - \E Y_n|^2 \right] = \int_0^\infty 2 t \mathbb{P} \left\{|Y_n - \E Y_n| \geq t \right\} d t \xrightarrow[d,n\to \infty, \frac{d}{n}\gamma]{} 0.
    % \end{align*}
    This means that:
    \begin{align*}
        \E \left(\frac{1}{d} \R_n(z) \frac{\X_1\X_1^\mathsf{T}}{n_1}\right)^2 \to \left(\E \frac{1}{d} \R_n(z) \frac{\X_1\X_1^\mathsf{T}}{n_1}\right)^2.
    \end{align*}
    All of this combined, we get: 
    \begin{align*}
        \frac{1}{d}\E \Tr\left[\Bb_n(z)\right] \xrightarrow[d,n\to \infty, \frac{d}{n}\to \gamma]{} T(z) \left(\frac{\gamma}{\pi_1} + 1 + \gamma T(z)\right).
    \end{align*}
Using the \acl{MP} relation with $G(z)$, the Cauchy transform yields 
    \begin{align}
        T(z) = z G(z) -1 = \frac{G(z)}{1-\gamma G(z)}, \label{eq:expectation-limit_type2}
    \end{align}
    and we find that 
    \begin{align*}
        T(z) \left(\frac{\gamma}{\pi_1} + 1 + \gamma T \right) = T(z) \left(\frac{\gamma}{\pi_1} + \frac{1}{1-\gamma G(z)}\right).
    \end{align*}
 This gives, with \eqref{eq:expectation-limit_type2}, the convergence in expectation.

We finish by using the same gradient-norm bound argument as in Lemma~\ref{lem:limit_type1} to show that $\|\nabla \frac{1}{d} \Tr(\Bb_n(z))\|^2=\mathcal{O}((dn)^{-1})$ with exponentially high probability. 
Herbst' lemma~\ref{lem:herbst}, together with the McShane extension and Borel–Cantelli lemma give almost surely convergence.
The case for $i\neq j$ can be deduced by using what was proven for $i=j$, using the following relation:
\begin{align*}
    &\frac{1}{d}\Tr \left[\frac{\X_i\X_i^\mathsf{T}}{n_i}\left(z\I - \Sb_n\right)^{-1}\frac{\X_j\X_j^\mathsf{T}}{n_j}\right] \\
    &= \frac{(n_i+n_j)^2}{2n_in_j} \frac{1}{d}\Tr \left[\frac{\X_i\X_i^\mathsf{T} + \X_j\X_j^\mathsf{T}}{n_i+n_j}(z\I - \Sb_n)^{-1} \frac{(\X_i\X_i^\mathsf{T}+\X_j\X_j^\mathsf{T})}{n_i+n_j}\right] \\
    &\quad - \frac{n_i^2}{2n_in_j} \frac{1}{d}\Tr \left[ \frac{\X_i\X_i^\mathsf{T}}{n_i}(z\I - \Sb_n)^{-1} \frac{\X_i\X_i^\mathsf{T}}{n_i}\right] - \frac{n_j^2}{2n_in_j} \frac{1}{d}\Tr \left[ \frac{\X_j\X_j^\mathsf{T}}{n_j}(z\I - \Sb_n)^{-1} \frac{\X_j\X_j^\mathsf{T}}{n_j}\right].
\end{align*}
The convergence of each term can be deduced from what we proved in the case $i=j$ and gives:
\begin{align*}
    &\frac{1}{d}\Tr \left[\frac{\X_i\X_i^\mathsf{T}}{n_i}\left(z\I - \Sb_n\right)^{-1}\frac{\X_j\X_j^\mathsf{T}}{n_j}\right] \\
    &\xrightarrow[d,n\to \infty, \frac{d}{n} \to \gamma]{\text{a.s.}} \frac{(\pi_i+\pi_j)^2}{2\pi_i \pi_j}T(z)\left(\frac{\gamma}{\pi_i + \pi_j} + \frac{1}{1-\gamma G(z)}\right) \\
    & \quad\quad\quad\quad\quad\quad-\frac{1}{2}\frac{\pi_i}{\pi_j}T(z)\left(\frac{\gamma}{\pi_i} + \frac{1}{1-\gamma G(z)}\right)  - \frac{1}{2} \frac{\pi_j}{\pi_i}T(z) \left(\frac{\gamma}{\pi_j} + \frac{1}{1-\gamma G(z)}\right)\\
    &\quad \quad\quad\quad\quad\quad= T(z) \left(\frac{1}{1-\gamma G(z)}\right).
\end{align*}
Similar to Lemma~\ref{lem:limit_type1}, for $n,d$ large enough, the quantities are uniformly bounded and equicontinuous on the set $\mathcal{K}(\eta) = \{z\in \mathbb{C},\: d(z,[a,b])> \eta\}$ for all $\eta > 0$. An application of Arzela-Ascoli's theorem thus gives the uniform convergence $\mathcal{K}(\eta)$.
\end{proof}

%%%%%%%%%%%%%%%%%%%%%%%%%%%%%%%%%%%%%%%%%%%%%%%%%%%%%%%%%%%%%%%%%%%%%%%%
%%%%%% Extension to infinite betas:
%%%%%%%%%%%%%%%%%%%%%%%%%%%%%%%%%%%%%%%%%%%%%%%%%%%%%%%%%%%%%%%%%%%%%%%%
\section{Rare and Intense Subpopulations}
\label{sec:rare_intense}

The main paper establishes Theorem~\ref{thm:SMM_phase_transition} for fixed signal strengths $\beta_k$ in the \ac{SMM}. In this appendix, we show that the same phase transition mechanism also extends to scenarios with diverging energy parameters, $\beta_k\to\infty$, provided that the probability-weighted energy $\pi_k\beta_k$ stays bounded. This signal model captures rare but intense subpopulations: a spike is seen in a vanishing fraction of observations ($\pi_k\to0$) yet it is strong when present ($\beta_k\to\infty$), with a fixed limiting weighted energy contribution $\pi_k\beta_k \to \eta_k$. In an application domain such as \ac{IMS}, this could represent a spatially scarce biological tissue structure with a strong mass spectral signature. The regime is of particular interest because it can potentially be encountered in application domains, yet it lies outside the framework provided by \cite{wang2024nonlinear}. Specifically, the concentration of quadratic forms that \cite{wang2024nonlinear} assumes fails in this scenario, while the finite-rank-based analysis provided in this paper still applies.

\subsection{The regime}
We take $\beta_k=\beta_k(d)$ and $\pi_k=\pi_k(d)$ such that, as $d,n\to\infty$ with
$d/n\to\gamma$,
\begin{equation}
\label{eq:rare_intense_regime}
\beta_k\to\infty,\qquad \pi_k\beta_k\to\eta_k\in(0,\infty),\qquad
\sqrt d\ \ll\ \beta_k\ \ll\ d .
\end{equation}
Since $\Lb_{l,m}=\theta_{l,m}\sqrt{\pi_l\pi_m\beta_l\beta_m}=\theta_{l,m}\sqrt{\eta_l\eta_m}$, the limiting matrix $\Lb$ depends on the spikes only through the weighted energy
$\eta_k$ and stays bounded.

\subsection{Failure of the concentration condition}
\begin{proposition}
\label{prop:5c_fails}
Under \eqref{eq:rare_intense_regime}, Assumption~5(c) of \cite{wang2024nonlinear} is violated. More concretely, using $\pi_1\beta_1 = \eta_1$, and $\A=\tfrac1d\I_d$, we have $\g^\mathsf{T}\A \g-\E[\g^\mathsf{T}\A \g]\not\prec\|\A\|_F$.
\end{proposition}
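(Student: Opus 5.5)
We take a single SMM observation $\g=\y$ and compute the law of the quadratic form $\g^\mathsf{T}\A\g$ for $\A=\tfrac1d\I_d$, so that $\g^\mathsf{T}\A\g=\tfrac1d\|\g\|_2^2$ and $\|\A\|_{\mathrm F}=d^{-1/2}$. The stochastic domination $\g^\mathsf{T}\A\g-\E[\g^\mathsf{T}\A\g]\prec\|\A\|_{\mathrm F}$ in \cite{wang2024nonlinear} means that, for every $\epsilon>0$ and $D>0$,
\[
\mathbb{P}\bigl\{|X_d|>d^{\epsilon}\|\A\|_{\mathrm F}\bigr\}\le d^{-D}
\]
for $d$ large, where $X_d=\g^\mathsf{T}\A\g-\E[\g^\mathsf{T}\A\g]$. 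To refute it, it suffices to exhibit one $\epsilon>0$ and a deviation event of probability at least $d^{-D_0}$ for some fixed $D_0$, along a subsequence.

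First, we condition on the latent label $z$. On $\{z=1\}$, which has probability $\pi_1\sim\eta_1/\beta_1$, we have
\[
\g=\alpha\sqrt{\beta_1}\vb_1+\e, \qquad \tfrac1d\|\g\|_2^2=\tfrac{\alpha^2\beta_1}{d}+\tfrac{2\alpha\sqrt{\beta_1}}{d}\vb_1^\mathsf{T}\e+\tfrac1d\|\e\|_2^2 .
\]
Next, we compute the mean:
\[
\E[\g^\mathsf{T}\A\g]=\tfrac1d\Bigl(d+\sum_k\pi_k\beta_k\Bigr)=1+O(1/d),
\]
since every $\pi_k\beta_k$ is bounded, as $\pi_k\beta_k\to\eta_k$ or is fixed. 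Thus the mean is $1+O(d^{-1})$. The noise contributions $\tfrac1d\|\e\|^2-1$ and the cross term are $O_\prec(d^{-1/2})$ and $O_\prec(\sqrt{\beta_1}/d)$ respectively, both $\prec d^{-1/2}$ because $\beta_1\ll d$.

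The key step is the spike term. On $\{z=1\}\cap\{|\alpha|\ge1\}$ we have $\alpha^2\beta_1/d\ge\beta_1/d$. Since $\beta_1\gg\sqrt d$, we can write $\beta_1/d=d^{-1/2}\cdot(\beta_1/\sqrt d)$ with $\beta_1/\sqrt d\to\infty$. To obtain a polynomial factor $d^{\epsilon}$ we need a quantitative version of the growth, for instance $\beta_1\ge d^{1/2+2\epsilon}$ along a subsequence. If the regime only gives $\beta_1/\sqrt d\to\infty$ slowly, we instead use a large-$\alpha$ event: $\{|\alpha|\ge d^{\epsilon}\}$ has probability $\exp(-d^{2\epsilon}/2)$, which is not polynomial. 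Hence the cleanest route is to note that \eqref{eq:rare_intense_regime} is meant to encompass sequences with $\beta_1=d^{\kappa}$, $\kappa\in(1/2,1)$, and to take $\epsilon=(\kappa-1/2)/2$. This quantification is the main obstacle, and we would state explicitly that $\prec$ is being refuted for such polynomial choices. On the event
\[
E_d=\{z=1,\ |\alpha|\ge1\}\cap\{\text{noise and cross terms}\le d^{-1/2+\epsilon/2}\},
\]
we then get $|X_d|\ge \beta_1/d-O(d^{-1/2+\epsilon/2})\ge d^{\epsilon}\|\A\|_{\mathrm F}$ for large $d$.

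Finally, we bound the probability of this event using independence of $z,\alpha,\e$ and Gaussian concentration for $\|\e\|^2$ and $\vb_1^\mathsf{T}\e$ (the Herbst-type bound, Lemma~\ref{lem:herbst}):
\[
\mathbb{P}(E_d)\ge \pi_1\,\mathbb{P}(|\alpha|\ge1)\,\bigl(1-o(1)\bigr)\gtrsim \eta_1 d^{-\kappa}.
\]
This is a fixed polynomial $d^{-D_0}$ with $D_0=\kappa<1$. It contradicts the bound $d^{-D}$ required for every $D$ (take $D=2$), so the concentration Assumption~5(c) fails. We would remark that the failure stems exactly from the rare event $\{z=1\}$ having only polynomially small probability while producing a deviation of order $\beta_1/d\gg d^{-1/2}$.
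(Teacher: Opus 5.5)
Your proof is correct and follows the paper's argument. You condition on $\{z=1\}$ and on a constant-probability event for $\alpha$, so that the spike term $\alpha^2\beta_1/d$ dominates the $O_\prec(d^{-1/2})$ noise and cross terms, on an event of probability $\asymp\pi_1\asymp 1/\beta_1\gg d^{-1}$; the paper does the same with $\alpha^2\ge\tfrac12$ and $D=1$ in place of your $|\alpha|\ge1$ and $D=2$.

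Restricting to polynomial growth $\beta_1=d^{\kappa}$, $\kappa\in(\tfrac12,1)$, is a needed sharpening, not a weakness. The paper asserts that some fixed $\varepsilon>0$ puts this event inside $\{|\g^\mathsf{T}\A\g-\E[\g^\mathsf{T}\A\g]|>d^{\varepsilon}\|\A\|_F\}$, and that step silently requires $\beta_1\ge d^{1/2+c}$ for some $c>0$. Indeed, for $\beta_1=\sqrt d\log d$ one can check that $\g^\mathsf{T}\A\g-\E[\g^\mathsf{T}\A\g]\prec\|\A\|_F$ actually holds for $\A=\tfrac1d\I_d$.
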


\begin{proof}
Since $\Tr\A=1$ and $\|\A\|_F=d^{-1/2}$, we have $\Tr\A/\|\A\|_F=\sqrt d$. Writing $\g^\mathsf{T}\A \g=\frac{1}{d}\|\g\|^2$, the unconditional mean is $\mu:= \E [\g^\mathsf{T}\A \g] =1+\tfrac1d\sum_{k=1}^K\pi_k\beta_k=1+O(d^{-1})$. \\
Conditioned on the rare spike $\{z=1\}$, $\g=\alpha\sqrt{\beta_1}\vb_1+\e$ with $\alpha\sim\mathcal N(0,1)$, $\e\sim\mathcal N(0,\I_d)$. Then,
\begin{align*}
    \g^\mathsf{T}\A \g\,\big|\,\{z=1\}
    =\frac{\alpha^2\beta_1}{d}+\frac{2\alpha\sqrt{\beta_1}\,\vb_1^\mathsf{T}\e}{d}+\frac{\|\e\|^2}{d}
    =1+\frac{\alpha^2\beta_1}{d}+O_\prec\!\Big(d^{-1/2}\Big),
\end{align*}
using $\frac{1}{d}\|\e\|^2=1+O_\prec(d^{-1/2})$ and
$\frac{1}{d}\,2\alpha\sqrt{\beta_1}\vb_1^T\e=O_\prec(\sqrt{\beta_1}/d)=o(d^{-1/2})$ since $\beta_1\ll d$. Since $\alpha^2\sim\chi^2_1$, we have $\mathbb{P}[\alpha^2\geq\frac{1}{2}]\geq c_0>0$. Moreover, on $\{z=1\}\cap\{\alpha^2\ge\tfrac12\}$ and for all large $d$,
\begin{align*}
    \big|\g^\mathsf{T}\A \g-\mu\big|\ \ge\ \frac{\beta_1}{2d}-o(d^{-1/2}),
    \qquad
    \frac{\big|\g^\mathsf{T}\A \g-\mu\big|}{\|\A\|_F}\ \gtrsim\ \frac{\beta_1/d}{d^{-1/2}}
    =\frac{\beta_1}{\sqrt d}\ \xrightarrow{\ \beta_1\gg\sqrt d\ }\ \infty .
\end{align*}
Hence, there exists $\varepsilon>0$ and large $d$ such that this event lies in $\{|\g^\mathsf{T}\A \g-\E[\g^\mathsf{T}\A \g]|>d^\varepsilon\|\A\|_F\}$, and its probability is
\begin{align*}
\mathbb{P}\left[z=1,\ \alpha^2\geq \frac{1}{2}\right]=\pi_1\mathbb{P}\left[\alpha^2\geq \frac{1}{2}\right]
\ \geq\ \frac{c_0\eta_1}{\beta_1}\ \asymp\ \frac{1}{\beta_1}.    
\end{align*}
Taking a real scalar $D=1$, since $\beta_1\ll d$ we have $1/\beta_1\gg d^{-1}=d^{-D}$, so the bound $\mathbb{P}[\cdot]<d^{-D}$ required by $\prec$ fails for this $D$. Thus, for this particular choice of $\A$, Assumption~5(c) of \cite{wang2024nonlinear} does not hold.
\end{proof}

%%%%%%%%%%%%%%%%%%%%%%%%%%%%%%%%%%%%%%%%%%%%
%%%%%%%%%%% Generalization argument
%%%%%%%%%%%%%%%%%%%%%%%%%%%%%%%%%%%%%%%%%%%%
\subsection{Recovery of the phase transition: a worked example}
\label{subsec:rare_intense_recovery} 

We explore a concrete $K=2$ instance in regime~\eqref{eq:rare_intense_regime}, which according to Proposition~\ref{prop:5c_fails} falls outside of the scope of \cite{wang2024nonlinear}, and for which the finite-rank-based argument provided in this paper still recovers the phase transition. The example is minimal: one rare-intense spike alongside an ordinary sub-critical one, with asymptotically orthogonal signals.\\

\paragraph{Setup}
For $K=2$, we draw $\vb_1,\vb_2$ from~\eqref{eq:vdefs} with $\theta_{1,2}=0$, so that $\vb_1,\vb_2$ are independent and uniform on $\mathbb S^{d-1}$. Conditioning on $\vb_1$, the map $\vb_2\mapsto\vb_1^\mathsf{T}\vb_2$ is $1$-Lipschitz with mean zero, so Theorem~\ref{thm:lipschitz_sphere} gives $\vb_1^\mathsf{T}\vb_2=O_\prec(d^{-1/2})$.

\begin{align}
\label{eq:rare_intense_example}
\beta_1=d^{1/2+\varepsilon}\ (\varepsilon\in(0,\tfrac12)),\quad
\pi_1=\frac{\eta_1}{\beta_1},\qquad
\beta_2>0\ \text{fixed},\quad \pi_2=1-\pi_1,\quad
\pi_2\beta_2\to \beta_2 := \eta_2<\sqrt\gamma .
\end{align}
Then, $\beta_1\to\infty$, $\pi_1\beta_1=\eta_1\in(0,\infty)$ fixed, and $\sqrt d\ll\beta_1\ll d$, so~\eqref{eq:rare_intense_regime} holds for the rare component. The second signal strength, $\beta_2$, adheres to the fixed-strength setting of
Theorem~\ref{thm:SMM_phase_transition}. The limiting Gram matrix is
diagonal, $\Lb=\mathrm{Diag}(\eta_1,\eta_2)$.\\

\paragraph{Block sizes}

With $(n_1,n_2)\sim\mathrm{Mult}(n;\pi_1,\pi_2)$, the rare count $n_1\sim\mathrm{Binomial}(n,\pi_1)$ has mean $m_1=n\pi_1\asymp(\eta_1/\gamma)d^{1/2-\varepsilon}\to\infty$. Since $\pi_1\to0$, the additive control $\{|n_1/n-\pi_1|\le\varepsilon\}$ of Lemmas~\ref{lem:limit_type1}--\ref{lem:limit_type2} is vacuous (its exponent $n\pi_1^2\asymp d^{-2\varepsilon}\to0$). We replace it by the multiplicative Chernoff bound, valid for every $(n,\pi_1)$ and $\delta\in(0,1)$,

\begin{align}
\label{eq:chernoff}
\mathbb{P}\left(|n_1-m_1|\geq \delta\,m_1\right)\leq 2\exp\left(-\tfrac{\delta^2 m_1}{3}\right).
\end{align}
As $m_1$ is a diverging power of $d$, the right-hand side is summable, and the Borel-Cantelli lemma (letting $\delta$ to $0$ along a countable sequence) yields
\begin{align}
\label{eq:rare_probability}
n_1\xrightarrow{ \text{a.s.}}\infty,\qquad \frac{n_1}{n\pi_1}\xrightarrow{\ a.s.\ }1, \qquad \frac{n_1}{n}\beta_1\xrightarrow{\ a.s.\ }1, \quad \frac{n_2}{n}\xrightarrow{\ a.s.\ }1 .    
\end{align}\\

\paragraph{Weighted trace (adapting the convergence results)}

The diagonal limit of Lemma~\ref{lem:limit_type2} carries a factor
$\gamma/\pi_i$ that diverges as $\pi_1\to0$, so the lemma cannot be applied directly to the rare block. It is, however, needed only in the $\tfrac{n_i}{n}$-weighted form, in which the weight cancels the divergence.
Setting
\begin{align*}
\Bb_n(z):=\frac{\X_1\X_1^\mathsf{T}}{n_1}\R_n(z)\frac{\X_1\X_1^\mathsf{T}}{n_1},
\qquad
\mathbf C_n(z):=\frac{\X_2\X_2^\mathsf{T}}{n_2}\R_n(z)\frac{\X_2\X_2^\mathsf{T}}{n_2},    
\end{align*}
the proof of Lemma~\ref{lem:limit_type2}---which uses $\pi_i$ only through $d/n_i\to\gamma/\pi_i$, all variance bounds needing merely $n_i\to\infty$ and $\|\R_n\|\le 1/|\mathrm{Im}(z)|$---together with~\eqref{eq:rare_probability} gives:
\begin{align}
    \frac{n_1}{n}\frac{1}{d}\Tr(\Bb_n(z)) &\xrightarrow[]{\text{a.s.}} T(z) \gamma, \label{eq:weighted_trace_1} \\
    \frac{n_2}{n}\frac{1}{d}\Tr(\mathbf{C}_n(z)) &\xrightarrow[]{\text{a.s.}} T(z) \left(\gamma +\frac{1}{1-\gamma G(z)}\right) , \label{eq:weighted_trace_2}
\end{align}
the rare weight $\tfrac{n_1}{n}\asymp\pi_1$ absorbing the $\gamma/\pi_1$.\\

\paragraph{Transform first, then take the limit}

As in the approach developed above, we let:
\begin{align*}
 \M_{d,n}(z) := \Pb_2^\mathsf{T} (z\I-\Z)^{-1}\Pb_1, \\
 \tilde{\M}_{d,n}(z) := \tilde{\Pb}_2^\mathsf{T} (z\I-\Z)^{-1}\tilde{\Pb}_1   .
\end{align*}
where $\tilde{\Pb}_i$ is obtained by scaling the even columns by $-1$.
The eigenvalues popping out the \ac{MP} spectra are the roots of
\begin{align}
\label{eq:sym_det}
|\det(\I_{2K}-\M_{d,n})|
=\left|\det\left(\I_{2K}-(\M_{d,n}+\tilde\M_{d,n}-\M_{d,n}\tilde\M_{d,n})\right)\right|^{1/2}.
\end{align}
In the approach developed in section~\ref{sec:cov-matrices+proof} for fixed $\beta_k    $, we first found the limiting $\M_{d,n}$, and manipulated the matrix afterward. Here, the entries of $\M_{d,n}$ diverge ($c_1\sim\sqrt{\beta_1} \to\infty$), so we instead perform the manipulation at finite $d$ and limit afterwards. Conjugating by
\begin{align*}
    \R:=\mathrm{Diag}\left(\sqrt{\frac{n_2}{n}},\sqrt{\frac{n_1}{n}},
\sqrt{\frac{n_1}{n}},\sqrt{\frac{n_2}{n}}\right) \in \mathbb{R}^{2K\times 2K}    
\end{align*}
is a similarity, hence determinant-preserving for every finite $d$, and it places every divergent quantity behind a weight $\tfrac{n_k}{n}\asymp\pi_k$. With $\Hb_k=c_k\Z_k+\tfrac{c_k^2}{2}(\vb_k^\mathsf{T}\Z_k\vb_k)\I$, the strength $\beta_1$ (equivalently $c_1\sim\sqrt{\beta_1}$) therefore enters every entry of $\R\left(\M_{d,n}+\tilde\M_{d,n}-\M_{d,n}\tilde\M_{d,n}\right)\R^{-1}$ only through the bounded combinations
\begin{align*}
    \tfrac{n_1}{n}c_1^2\to\eta_1,\qquad \frac{n_1}{n}c_1\to0,
\end{align*}
together with the weighted traces~\eqref{eq:weighted_trace_1}, \eqref{eq:weighted_trace_2}. Every entry is thus converging in the limit. The diagonal blocks converge to multiples of the identity, while the cross blocks carry the additional factor $\vb_1^\mathsf{T}\vb_2=O_\prec(d^{-1/2}) \to 0$ and hence vanish.
We then have
\begin{align*}
    &\R(\M+\tilde{\M})\R^{-1} \\
    &:= 
    \begin{bmatrix}
        2\frac{n_1}{n} \vb_1^\mathsf{T} (z\I-\Z)^{-1}\Hb_1\vb_1 & 0 & 2\frac{n_2}{n} \vb_1^\mathsf{T} (z \I-\Z)^{-1}\Hb_2 \vb_2  & 0 \\
        0 &  2\frac{n_1}{n}\vb_1^\mathsf{T}\Hb_1(z\I-\Z)^{-1}\vb_1 & 0 & 2\frac{n_1}{n}\vb_1^\mathsf{T}\Hb_1(z\I-\Z)^{-1}\vb_2  \\
        2\frac{n_1}{n} \vb_2^\mathsf{T} (z\I-\Z)^{-1}\Hb_1\vb_1 & 0 & 2\frac{n_2}{n} \vb_2^\mathsf{T} (z\I-\Z)^{-1}\Hb_2 \vb_2 & 0 \\
        0 &  2\frac{n_2}{n}\vb_2^\mathsf{T}\Hb_2(z\I-\Z)^{-1}\vb_1  & 0 & 2\frac{n_2}{n}\vb_2^\mathsf{T}\Hb_2(z\I-\Z)^{-1}\vb_2 \\
    \end{bmatrix}\\
    &\to \begin{bmatrix}
        \eta_1 G(z) & 0 & 0 & 0 \\
        0 & \eta_1 G(z) & 0 & 0 \\ 
        0 & 0 & 2 c_2 T(z) + c_2^2 G(z) & 0 \\
        0 & 0 & 0 & 2 c_2 T(z) + c_2^2 G(z)
    \end{bmatrix}\\
    &\R\M\tilde{\M}\R^{-1} \\
    &\to \begin{bmatrix}
        \eta_1 \gamma T(z)G(z) & 0 & 0 & 0 \\
        0 & \eta_1 \gamma T(z) G(z) & 0 &  \\
        0 & 0 & -c_2^2 T(z)\gamma G(z) & 0 \\
        0 & 0 & 0 & -c_2^2 T(z) \gamma G(z)
    \end{bmatrix}
\end{align*}
\begin{align}
    \R(\M+\tilde{\M})\R^{-1} -\R\M\tilde{\M}\R^{-1} &\to \begin{bmatrix}
        \eta_1 T(z) & 0 & 0 & 0 \\
        0 & \eta_1 T(z) & 0 & 0 \\
        0 & 0 & \beta_2 T(z) & 0 \\
        0 & 0 & 0 & \beta_2 T(z)
    \end{bmatrix},
    \label{eq:W_limit}
\end{align}
where we used the relation between $G(z)$ and $T(z)$ : $T(z) = G(z)/(1-\gamma G(z))$.\\
\paragraph{Phase transition}
By~\eqref{eq:sym_det} and~\eqref{eq:W_limit}, the limiting outlier equation is
\begin{align*}
    |1-\eta_1 T(z)|\,|1-\eta_2 T(z)|=0 .
\end{align*}
By the threshold analysis of Section~\ref{sec:schur-complement-equivalence}, $T(z)=1/\eta_k$ has a root outside $[a,b]$ iff $\eta_k>\sqrt\gamma$. Since $\eta_2<\sqrt\gamma$ by construction, the second factor gives no outlier and $\lambda_2(\Sb_{d,n})\to(1+\sqrt\gamma)^2$, while
\begin{align*}    
\lambda_1(\Sb_{d,n})\xrightarrow{\text{a.s.}}
\begin{cases}
T^{-1}(1/\eta_1)=(1+\eta_1)(1+\gamma/\eta_1), & \eta_1>\sqrt\gamma,\\
(1+\sqrt\gamma)^2, & \eta_1\le\sqrt\gamma.
\end{cases}
\end{align*}
Detectability of the rare spike's subpopulation is thus governed by its weighted energy $\eta_1=\pi_1\beta_1$ alone, independently of the peak strength $\beta_1=d^{1/2+\varepsilon}$, for every $\varepsilon\in(0,\tfrac12)$. Since this is a configuration on which Assumption~5(c) of \cite{wang2024nonlinear} fails by Proposition~\ref{prop:5c_fails}, it demonstrates merit for a finite-rank-based approach as pursued in this paper.

} %%%%%% End Appendix

\newpage
\bibliographystyle{IEEEtran}
\bibliography{reference}

\end{document}